\definecolor{citation}{rgb}{0.2,0.58,0.2} 
\definecolor{formula}{rgb}{0.1,0.2,0.6}
\definecolor{url}{rgb}{0.3,0,0.5}
\newcommand{\reqnomode}{\tagsleft@false}
\def\dx{\,{\rm d}x}
\def \d{\,{\rm d}}
\def\dist{\,{\rm dist}}
\DeclareRobustCommand*{\bfseries}{%
  \not@math@alphabet\bfseries\mathbf
  \fontseries\bfdefault\selectfont
  \boldmath
}
\newlength{\defbaselineskip}
\newcommand{\mint}{\mathop{\int\hskip -1,05em -\, \!\!\!}\nolimits}
\newtheorem{theorem}{Theorem}
\newtheorem{corollary}{Corollary}[section]
\newtheorem{definition}{Definition}
\newtheorem{remark}{Remark}[section]
\newtheorem{lemma}{Lemma}[section]
\newtheorem{proposition}{Proposition}[section]
\numberwithin{equation}{section}
\newcommand{\kk}{\kappa}
\newcommand{\m}{\mathcal{M}}
\def\er{\mathbb R}
\newcommand\eps\varepsilon
\def\eqn#1$$#2$${\begin{equation}\label#1#2\end{equation}}
\newcommand{\be}{\begin{equation}}
\newcommand{\ee}{\end{equation}}
\newcommand{\rr}{\varrho}
\newcommand{\const}{\operatorname{const}}
\newcommand{\snr}[1]{\lvert #1\rvert}
\newcommand{\nr}[1]{\lVert #1 \rVert}
\def\er{\mathbb R}
\newcommand{\RN}{\mathbb{R}^{N}}
\newcommand{\N}{\mathbb{N}}
\def\name[#1, #2]{#1 #2}
\title[$p(x)$-harmonic maps]{Boundary regularity for manifold constrained $p(x)$-harmonic maps}
\author[Chlebicka]{Iwona Chlebicka} \address{Iwona Chlebicka\\Faculty of Mathematics, Informatics and Mechanics, University of Warsaw\\ul. Banacha 2, 02-097 Warsaw, Poland} \email{\texttt{iskrzypczak@mimuw.edu.pl}}
\author[De Filippis]{Cristiana De Filippis}  \address{Cristiana De Filippis\\Mathematical Institute, University of Oxford\\ Andrew Wiles Building, Radcliffe Observatory Quarter, Woodstock Road, Oxford, OX26GG, Oxford, United Kingdom} \email{\texttt{Cristiana.DeFilippis@maths.ox.ac.uk}}
\author[Koch]{Lukas Koch}  \address{Lukas Koch\\Mathematical Institute, University of Oxford\\ Andrew Wiles Building, Radcliffe Observatory Quarter, Woodstock Road, Oxford, OX26GG, Oxford, United Kingdom} \email{\texttt{Lukas.Koch@maths.ox.ac.uk}}
\begin{document}

\subjclass[2010]{35J25, 35J60, 35J70\vspace{1mm}} 

\keywords{Regularity, manifold constrained problems, $p(x)$-Laplacian\vspace{1mm}}

\thanks{{\it Acknowledgements.}\ I. Chlebicka is supported by NCN grant no. 2016/23/D/ST1/01072. C. De Filippis and L. Koch are supported by the Engineering and Physical Sciences Research Council (EPSRC): CDT Grant Ref. EP/L015811/1. 
\vspace{1mm}}

\maketitle

\begin{abstract}
We prove partial and full boundary regularity for manifold constrained $p(x)$-harmonic maps.
\end{abstract}
\vspace{3mm}
{\small \tableofcontents}
\section{Introduction}
In this paper we complete the partial regularity theory for $p(x)$-harmonic maps studied in \cite{decv} providing partial and full boundary regularity for manifold constrained minima of the variable exponent energy:
\begin{flalign}\label{cvp}
g+\left(W^{1,p(\cdot)}(\Omega,\m)\cap W^{1,p(\cdot)}_{0}(\Omega,\RN)\right)\ni w\mapsto\mathcal{E}(w,\Omega):=\int_{\Omega}k(x)\snr{Dw}^{p(x)} \ \dx
\end{flalign}
for a suitable boundary datum $g\colon \bar{\Omega}\to \m$. Our main accomplishment is that there exists a relatively (to $\bar{\Omega}$) open subset $\Omega_{0}\subset \bar{\Omega}$ of full $n$-dimensional Lebesgue measure on which $u$ is locally H\"older continuous and the singular set $\Sigma_{0}:=\bar{\Omega}\setminus \Omega_{0}$ has Hausdorff dimension at the most equal to $n-\gamma_{1}$, see $\eqref{asp}_{1}$ below for more informations on this quantity. This is the content of the following theorem.
\begin{theorem}\label{t1}
Under assumptions \eqref{bdd}, \eqref{asp}, \eqref{ask} and \eqref{m}, let $u\in W^{1,p(\cdot)}(\Omega,\m)$ be a solution to the Dirichlet problem \eqref{cvp} with boundary datum $g\in W^{1,q}(\bar{\Omega},\m)$ satisfying \eqref{g}. Then there exists a relatively (to $\bar{\Omega}$) open subset $\Omega_{0}\subseteq \bar{\Omega}$ so that $u\in C^{0,1-\frac{n}{q}}_{loc}(\Omega_{0},\m)$ with $q$ as in \eqref{g} and $\mathcal{H}^{n-\gamma_{1}}(\Sigma_{0})=0$.
\end{theorem}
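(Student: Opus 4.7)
The strategy is to develop a boundary analogue of the interior partial regularity theory from \cite{decv} and combine the two. After localising near a point $x_{0}\in\del\Omega$ and flattening the boundary via a $C^{1}$-diffeomorphism, the problem reduces to minimising $\mathcal{E}$ on a half-ball $B_{r}^{+}(x_{0})$ with trace $g$ on the flat portion. Since $g\in W^{1,q}(\bar\Omega,\m)$ with $q>n$, $g$ is H\"older continuous, and I will use it as a reference value on the flat boundary without destroying the manifold constraint.

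The core technical step is a boundary Caccioppoli inequality together with a comparison estimate. As test functions I take $u-\eta(u-g)$ composed with the nearest-point projection $\Pi_{\m}$ onto $\m$, available thanks to \eqref{m}: this perturbation is admissible because it preserves the constraint and the boundary values on the flat part. The error introduced by $\Pi_{\m}$ is quadratic in $\snr{u-g}$ and is absorbed via Young's inequality tailored to the variable exponent $p(\cdot)$. Combining this with a Gehring-type self-improvement I obtain $\snr{Du}^{p(\cdot)}\in L^{1+\sigma}_{\loc}(\bar{\Omega})$ for some $\sigma>0$, the higher integrability reaching right up to the boundary.

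Next I would prove a boundary $\varepsilon$-regularity statement: there exists $\varepsilon_{0}>0$ such that if at a boundary point $x_{0}$ and some scale $r_{0}$ a suitably scaled boundary excess satisfies $E(x_{0},r_{0})<\varepsilon_{0}$, then $u\in C^{0,1-n/q}$ in a one-sided neighbourhood of $x_{0}$. This is carried out via a harmonic (or frozen $p_{0}$-harmonic) approximation and a blow-up argument adapted to the variable exponent: passing to the limit on a rescaled sequence freezes both the exponent and the coefficient $k$, reducing to classical boundary regularity for linear or $p_{0}$-Laplace systems with smooth data. An iteration of the resulting decay estimate then delivers the H\"older exponent $1-n/q$ claimed in the statement.

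Finally I define $\Omega_{0}$ as the set of points $x\in\bar\Omega$ for which there is a radius $r$ with $E(x,r)<\varepsilon_{0}$; continuity in $x$ of $r\mapsto E(x,r)$ makes $\Omega_{0}$ relatively open. The singular set $\Sigma_{0}$ is contained in the union of Lebesgue non-points of $Du$ and of points of non-vanishing excess, both of which are controlled by the higher integrability estimate above, and a standard covering argument in the spirit of Giusti--Mingione yields $\mathcal{H}^{n-\gamma_{1}}(\Sigma_{0})=0$. The main obstacle I anticipate is the construction of admissible manifold-valued comparison maps at the boundary: the Luckhaus-type lemma used in the interior argument of \cite{decv} must be adapted to boundary balls, simultaneously matching $g$, preserving the constraint $u\in\m$, and remaining compatible with the non-uniform $p(x)$-growth, which entangles the geometric and functional ingredients in a particularly delicate way.
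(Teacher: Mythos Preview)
Your overall strategy---flatten the boundary, prove Caccioppoli and higher integrability up to the flat part, establish an $\varepsilon$-regularity criterion, then estimate the singular set---matches the paper's architecture. However, there is a genuine gap in your construction of the manifold-valued competitor.

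You propose to test minimality with $\Pi_{\m}\bigl(u-\eta(u-g)\bigr)=\Pi_{\m}\bigl((1-\eta)u+\eta g\bigr)$. The nearest-point projection $\Pi_{\m}$ is only defined on a tubular neighbourhood of $\m$, and its existence requires nothing more than the $C^{2}$-regularity of $\m$; it has nothing to do with the topological hypothesis $\eqref{m}_{2}$. The problem is that the convex combination $(1-\eta)u+\eta g$ lies on the chord joining $u(x)$ and $g(x)$ in $\RN$, and before any regularity is known these two points of $\m$ may be far apart, so the chord can leave the tubular neighbourhood entirely. Your claim that ``the error introduced by $\Pi_{\m}$ is quadratic in $\snr{u-g}$'' presupposes that $u-g$ is already small, which is exactly what you do not have at the Caccioppoli stage. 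The paper circumvents this via Lemma~\ref{exlem}, which in turn rests on the Hardt--Lin locally Lipschitz retraction of Lemma~\ref{hoplem}: that retraction is defined on the complement of a low-dimensional skeleton and works for arbitrary $\RN$-valued maps with image in $\m$ near the boundary, but its existence requires precisely the $[\gamma_{2}]-1$-connectedness in $\eqref{m}_{2}$. This is the mechanism that makes the topological assumption enter the proof, and it is what your proposal is missing.

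On the $\varepsilon$-regularity step the paper takes a different route from the blow-up you sketch. Rather than rescaling and passing to a limit, it performs a two-stage comparison at fixed scale: first it compares $u$ on $B_{\rr/2}^{+}$ to a constrained minimiser $v$ of the frozen-exponent functional $\int k(x)\snr{Dw}^{p_{2}(\rr)}$, then compares $v$ on $B_{\rr/4}^{+}$ to the unconstrained $p_{2}(\rr)$-harmonic map $h$ with frozen coefficient $k(x_{0})$. The reference decay for $h$ from \cite{dugrkr} plus the two comparison estimates give the Morrey decay directly, after which iteration and Campanato yield $C^{0,1-n/q}$. This direct approach avoids the compactness issues that a blow-up with variable exponents would raise (identifying the limiting functional, uniform higher integrability for the sequence, etc.); your blow-up route is in principle viable but would need substantially more care than indicated, and in any case still requires a correct Caccioppoli inequality as input.
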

Moreover, after strengthening the hypotheses on the variable exponent $p(\cdot)$ and on the boundary datum $g(\cdot)$, we can prove that the singular set of solutions to problem
\begin{flalign}\label{cvp1.1}
g+\left(W^{1,p(\cdot)}(\Omega,\m)\cap W^{1,p(\cdot)}_{0}(\Omega,\RN)\right)\ni w\mapsto\mathcal{J}(w,\Omega):=\int_{\Omega}\snr{Dw}^{p(x)} \ \dx
\end{flalign}
does not intersect the boundary $\partial \Omega$. In this respect we have
\begin{theorem}\label{t2}
Under assumptions \eqref{bdd}, \eqref{aspinf} and \eqref{m}, let $u\in W^{1,p(\cdot)}(\Omega,\m)$ be a solution to the Dirichlet problem \eqref{cvp1.1} with boundary datum $g\colon \bar{\Omega}\to \m$ satisfying \eqref{g}. Then there exists a constant $\Upsilon\equiv \Upsilon(\texttt{data})\in (0,1]$ such that if 
\begin{flalign}\label{t2.1}
[g]_{0,1-\frac{n}{q};\bar{\Omega}}<\Upsilon,
\end{flalign}
then $\Sigma_{0}\Subset \Omega$ and so $u$ is $\left(1-\frac{n}{q}\right)$-H\"older continuous in a neighborhood of $\partial \Omega$.
\end{theorem}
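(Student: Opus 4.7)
The plan is to show that under the smallness assumption \eqref{t2.1}, every boundary point belongs to the regular set $\Omega_0$ of Theorem \ref{t1}. Since $\Sigma_0$ is relatively closed in $\bar{\Omega}$, this forces $\Sigma_0\Subset\Omega$, and the stated $(1-n/q)$-Hölder continuity in a neighborhood of $\partial\Omega$ is then an immediate consequence of Theorem \ref{t1}.

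The first step is to isolate, from the proof of Theorem \ref{t1}, a boundary $\varepsilon$-regularity criterion of the following form: there exist constants $\varepsilon_{0},R_{0}>0$, depending only on \texttt{data}, such that whenever $x_{0}\in\partial\Omega$ and $r\in(0,R_{0}]$ satisfy
\[
 E(u;x_{0},r) < \varepsilon_{0},
\]
where $E(u;x_0,r)$ is the relevant boundary excess functional (essentially the mean deviation of $\dd u$ from a constant on $B_{r}(x_{0})\cap\Omega$, weighted according to $p(\cdot)$, plus a term measuring how much $u$ departs from $g$), one concludes that $x_{0}\in\Omega_{0}$ and $u\in C^{0,1-n/q}(B_{r/2}(x_{0})\cap\bar{\Omega},\m)$. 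Under the strengthened assumption \eqref{aspinf}, this criterion is available at every boundary point, uniformly in $x_{0}$.

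The second step is to estimate $E(u;x_{0},r)$ by $[g]_{0,1-n/q;\bar{\Omega}}$. After flattening the boundary (legitimate by \eqref{bdd}), a boundary Caccioppoli inequality combined with a comparison map built from an $\m$-valued extension of $g$ yields an inequality of the form
\[
 E(u;x_{0},r) \le C\,r^{\alpha} + C\,[g]_{0,1-n/q;\bar{\Omega}}^{\beta},
\]
with $C\equiv C(\texttt{data})$ and $\alpha,\beta>0$, holding uniformly in $x_{0}\in\partial\Omega$ and $r\in(0,R_{0}]$. The first term is a higher-order contribution coming from the reference energy of the comparison, while the second captures the contribution of the boundary datum's oscillation.

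The third step is the parameter choice: select $r\in(0,R_{0}]$ so small that $Cr^{\alpha}<\varepsilon_{0}/2$, and then define $\Upsilon$ by $C\Upsilon^{\beta}=\varepsilon_{0}/2$. Whenever \eqref{t2.1} holds, the boundary $\varepsilon$-regularity criterion applies at every $x_{0}\in\partial\Omega$, so $\partial\Omega\subset\Omega_{0}$, which combined with the relative closedness of $\Sigma_{0}$ in $\bar{\Omega}$ yields $\Sigma_{0}\Subset\Omega$.

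The main obstacle is the excess estimate of Step 2: it requires producing a competitor which is simultaneously (i) $\m$-valued, (ii) has trace $g$ on $\partial\Omega\cap B_{r}(x_{0})$, and (iii) has energy controlled by a power of $[g]_{0,1-n/q;\bar{\Omega}}$. The manifold constraint obstructs straightforward harmonic replacement, and the variable exponent $p(x)$ forces a freezing argument that relies on the log-Hölder/continuity assumptions in \eqref{aspinf} to absorb oscillations of $p(\cdot)$ across scales. Once this estimate is in place, the conclusion is essentially an application of the boundary partial regularity machinery already developed for Theorem \ref{t1}.
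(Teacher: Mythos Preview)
Your overall strategy is different from the paper's, and the key step (Step~2) contains a real gap. You claim that a boundary Caccioppoli inequality together with an $\m$-valued comparison built from $g$ yields
\[
E(u;x_{0},r)\le C\,r^{\alpha}+C\,[g]_{0,1-n/q;\bar{\Omega}}^{\beta}
\]
uniformly in $x_{0}\in\partial\Omega$ and $r\in(0,R_{0}]$. But this estimate is essentially the theorem itself: the boundary Caccioppoli inequality \eqref{caccine} only controls $\rr^{p_{2}-n}\int_{B_{\rr/2}^{+}}\snr{Du}^{p(x)}\,\dx$ by the rescaled quantity $\rr^{p_{2}-n}\int_{B_{\rr}^{+}}\snr{(u-g)/\rr}^{p(x)}\,\dx$, and you have no a~priori smallness for $\snr{u-g}$ near the boundary --- that is precisely what regularity would give. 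A comparison map built from $g$ alone cannot help either: any admissible competitor on $B_{\rr}^{+}(x_{0})$ must match $u$ on the spherical part $\partial^{+}B_{\rr}^{+}(x_{0})$, where nothing is known about $u$, so its energy cannot be bounded purely in terms of $[g]_{0,1-n/q}$ and $r$. In short, Step~2 asserts exactly the decay that the whole argument is meant to produce, and the tools you list do not produce it.

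The paper proceeds by an indirect blow-up argument instead. Assuming $x_{0}\in\partial\Omega$ is singular, one rescales $u_{j}(x)=u(x/j)$, applies the compactness Lemma~\ref{comp} to extract a limit $u_{0}$ that is $p(x_{0})$-harmonic with \emph{constant} boundary datum $g(x_{0})$ and still singular at $0$, and then uses the monotonicity formula of Lemma~\ref{mono} (this is where the smallness \eqref{t2.1} actually enters, to project the radial comparison map back onto $\m$) to conclude via \eqref{monophi+} that $u_{0}$ is homogeneous of degree zero. A classical result of Hardt--Lin forces such a map to be constant, contradicting the singularity of $0$. The smallness of $[g]_{0,1-n/q}$ is thus used not to make the excess small directly, but to make the monotonicity formula available; the regularity at the boundary then comes out of the tangent-map classification rather than from a direct excess estimate.
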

We immediately refer to Section \ref{ma} for the complete list of assumptions in force concerning the regularity of $\partial \Omega$, the coefficients appearing in the energies displayed in \eqref{cvp}-\eqref{cvp1.1} and the topology of the manifold $\m$. The results exposed in Theorems \ref{t1}-\ref{t2} are new already in the case $p(\cdot)\equiv \const$. In fact, we recover for the $p(x)$-Laplacian the boundary regularity theory already available for $p$-harmonic maps, under weaker assumptions on the boundary datum than those considered in \cite{fu3,harlin,shouhlb}. Let us put our results into the context of the available literature. The regularity theory for vector-valued minimizers of functionals modelled upon the $p$-Laplacean integral, i.e. variational problems like
\begin{flalign}\label{pp}
&W^{1,p}_{loc}(\Omega,\RN)\ni w\mapsto \int_{\Omega}F(x,Dw) \ \dx\\
&\snr{z}^{p}\lesssim F(x,z)\lesssim (1+\snr{z}^{2})^{\frac{p}{2}}, \quad 1<p<\infty\nonumber,
\end{flalign}
started with the seminal paper \cite{uhl} and received several contributions later on, see \cite{giamar,giamodsou,giu,ha,kumi1,man} and references therein for an overview of the state of the art concerning $p$-laplacean type problems. On the other hand, the regularity theory in the case when both minimizers and competitors take values into a manifold $\mathcal M \subset \er^N$ faces additional difficulties. The cornerstones of the theory were laid down by the fundamental papers \cite{ES,evga,shouhl,shouhlb} analyzing harmonic maps, i.e., constrained minimizers of the functional in \eqref{pp} for $p=2$, see also \cite{sim}. We mention also the recent works \cite{MMS2,nava} for a fine analysis of the singular set of harmonic maps. The extension of  such basic results to the case $p\not =2$ has been done in the by now classical papers: \cite{fu1,fu2,fu3,harlin,luc}. Moreover, several results have been extended to more general functionals with $p$-growth, for instance the quasiconvex case has been treated in \cite{ho} while a purely PDE approach has been proposed in \cite{dumi}. The matter of boundary regularity for vectorial problems is rather delicate and received lots of attention in the literature, starting from \cite{jm}, which covers the case of quadratic functionals. This theory has been extended later on to variational integrals of $p$-laplacean type, see \cite{dugrkr} for the first results in this direction and \cite{be,dukrmi,ha,gr,km} for general systems with standard $p$-growth. On the other hand, we notice that energies of the type in \eqref{cvp} do not satisfy conditions as in \eqref{pp}, but rather, the more general and flexible one
\begin{flalign}\label{pq}
&W^{1,p}_{loc}(\Omega,\RN)\ni w\mapsto \int_{\Omega}F(x,Dw) \ \dx\\
&\snr{z}^{p}\lesssim F(x,z)\lesssim (1+\snr{z}^{2})^{\frac{q}{2}}\quad 1<p\le q<\infty.\nonumber
\end{flalign}
The systematic study of functionals as in \eqref{pq} started in \cite{ma1,ma2} and, subsequently, has undergone an intensive development over the last years, see for instance \cite{ba,bemi,besc,besc1,demi2,sharp,fomami,harhas,hisc,haok}. In particular, the energy in \eqref{cvp} have been introduced in the setting of Calculus of Variations and Homogenization in the seminal works \cite{zhi1, zhi2, zhi3}. Energies as in \eqref{cvp} also occur 
in the modelling of electro-rheological fluids, a class of non-newtonian fluids whose viscosity properties are influenced by the presence of external electromagnetic fields \cite{acmi}, see also \cite{dhroblemsr} for the basic properties of the $p(x)$-Laplacian. 
As for regularity, the first result in the vectorial case has been obtained in \cite{cosmin}, where it is shown that local minimizers of energy \eqref{cvp1.1} are locally $C^{1,\beta}$-regular in the unconstrained case. Subsequently, the regularity theory of functionals with variable growth has been developed in a series of interesting papers, \cite{rata,ratata,tac}, where the authors established partial regularity results for unconstrained minimizers that are on the other hand obviously related to the constrained case. Especially, in \cite{tac} is given an interesting partial regularity result and some singular set estimates for a class of functionals related to the constrained minimization problem in which minimizers are assumed to take values in a single chart. Finally, \cite{decv} is devoted to the study of partial inner regularity of manifold constrained $p(x)$-harmonic maps and to the analysis and dimension-reduction of their singular set.
\subsubsection*{Organization of the paper.} This paper is organized as follows: Section \ref{pre} contains our notation, the list of the assumptions which will rule problems \eqref{cvp}-\eqref{cvp1.1}, several by now classical tools in the framework of regularity theory and some results of geometric and topological nature on Lipschitz retractions. Finally, Sections \ref{partial}-\ref{fullb} are devoted to the proof of Theorem \ref{t1} and Theorem \ref{t2} respectively.
\section{Preliminaries}\label{pre}
In this section we display our notation, list the main assumptions in force throughout the paper and collect some useful tools for regularity theory and several well-known results in the framework of manifold-valued maps.
\subsection{Notation}
Following a usual custom, we denote by $c$ a general constant larger than one. Different occurrences from line to line will be still denoted by $c$, while special occurrences will be denoted by $c_1, c_2,  \tilde c$ or the like. Relevant
dependencies on parameters will be emphasized using parentheses, i.e., $c\equiv c(p,\nu, L)$ means that $c$ depends on $p,\nu, L$. Given any measurable subset $U\subset \mathbb{R}^{n}$, we denote by $\snr{U}$ its $n$-dimensional Lebesgue measure and with $\mathcal{H}^{k}(U)$ its $k$-dimensional Hausdorff measure, for some $k\ge 0$. For a point $x_{0}\in \mathbb{R}^{n}$ and a number $\rr>0$ we indicate with $B_{\rr}(x_{0}):=\left\{x\in \mathbb{R}^{n}\colon \snr{x-x_{0}}<\rr\right\}$ the open ball centered at $x_{0}$ and with radius $\rr$ and further, $B_{\rr}\equiv B_{\rr}(0)$. Similarly, for $x_{0}\in \mathbb{R}^{n-1}\times\{0\}$ we define the half ball centered at $x_{0}$ as: $B_{\rr}^{+}(x_{0}):=\left\{x\in B_{\rr}(x_{0})\colon x^{n}>0\right\}$. We moreover set $B_{\rr}^{+}\equiv B_{\rr}^{+}(0)$. We also name $\Gamma_{\rr}(x_{0})$ the set $\left\{x\in \mathbb{R}^{n}\colon x^{n}=0\ \mbox{and} \ \snr{x_{0}-x}<\rr\right\}$ and $\partial^{+}B_{\rr}^{+}(x_{0}):=\partial B_{\rr}^{+}(x_{0})\setminus \Gamma_{\rr}(x_{0})$. As before, $\Gamma_{\rr}\equiv \Gamma_{\rr}(0)$. With $U \subset \mathbb{R}^{n}$ being a measurable subset having finite and positive $n$-dimensional Lebesgue measure, and with $h \colon  U \to \er^{k}$, being a measurable map, we shall denote by  $$
   (h)_{U} \equiv \mint_{ U}  h(x) \ \dx  := \frac{1}{\snr{U}}\int_{U}  h(x) \ \dx
$$
its integral average. Similarly, with $\gamma \in (0,1)$ we denote the H\"older seminorm of $h$ as
$$
[h]_{0,\gamma; U} := \sup_{x,y \in U, x \not= y} \, 
\frac{|h(x)-h(y)|}{|x-y|^\gamma}.$$
It is well known that the quantity defined above is a seminorm and when $[h]_{0,\gamma;U}<\infty$, we will say that $h$ belongs to the H\"older space $C^{0,\gamma}(U,\mathbb{R}^{k})$. When clear from the context, we will omit the reference to $U$, i.e.: $[h]_{0,\gamma;U}\equiv [h]_{0,\gamma}$. Finally, given any set $\Gamma$ allowing for a trace operator, we denote by $\texttt{tr}_{\Gamma}(h)$ the trace of $h$ on $\Gamma$. 
\subsection{Main assumptions}\label{ma}
 Let us turn to the main assumptions that will characterize our problem. The set $\Omega\subset \mathbb{R}^{n}$, $n\ge 2$ is open, bounded, connected and 
 \begin{flalign}\label{bdd}
 \partial \Omega \ \mbox{is} \ C^{2}\mbox{-regular}.
 \end{flalign}
 When considering the functional in \eqref{cvp}, the exponent $p(\cdot)$ will always satisfy
 \begin{flalign}\label{asp}
\begin{cases}
\ p\in C^{0,\alpha}(\bar{\Omega}) \ \ \mbox{for some} \ \ \alpha\in (0,1]\\
\ 1<\gamma_{1}:=\inf_{x\in \bar{\Omega}}p(x)\le p(x)\le \gamma_{2}:=\sup_{x\in \bar{\Omega}}p(x)<\infty,
\end{cases}
\end{flalign}
while the coefficient $k(\cdot)$ is so that
\begin{flalign}\label{ask}
\begin{cases}
\ k\in C^{0,\nu}(\bar{\Omega}) \ \ \mbox{for some} \ \ \nu\in (0,1]\\
\ 0<\lambda\le k(x)\le \Lambda<\infty \ \ \mbox{for all} \ \ x\in \bar{\Omega}
\end{cases}
\end{flalign}
holds true. We anticipate that in the estimates contained in Section \ref{t1p}, only $\min\left\{\alpha,\nu\right\}$ will be relevant, so, for simplicity, for the proof of Theorem \ref{t1} we will assume that $\alpha=\nu$, i.e.: $p(\cdot)$, $k(\cdot)\in C^{0,\alpha}(\bar{\Omega})$. When dealing with the question of full boundary regularity, we need higher regularity for $p(\cdot)$. Precisely, we shall suppose that
\begin{flalign}\label{aspinf}
\begin{cases}
\ p\in C^{0,1}(\bar{\Omega}) \\
\ 2\le \gamma_{1}\le p(x)\le \gamma_{2}<\infty,
\end{cases}
\end{flalign}
with $\gamma_{1}$ and $\gamma_{2}$ as in $\eqref{asp}_{2}$. Given an half ball $B_{R}^{+}$ and a ball $B_{\rr}(x_{0})$ with $x_{0}\in B_{R}^{+}$ and $\rr\in (0,R-\snr{x_{0}})$, we denote
\begin{flalign}\label{p1p2}
p_{1}(x_{0},\rr):=\inf_{x \in B_{\rr}(x_{0})\cap B_{R}^{+}}p(x) \quad \mbox{and}\quad p_{2}(x_{0},\rr):=\sup_{x \in B_{\rr}(x_{0})\cap B_{R}^{+}}p(x).
\end{flalign}
Since in \eqref{p1p2} we will always consider the intersection with the same ball $B_{R}^{+}$, the reference to $R$ in the symbols $p_{1}$, $p_{2}$ is omitted. When clear from the context, in \eqref{p1p2} we shall not mention $x_{0}$ i.e.: $p_{i}(x_{0},\rr)\equiv p_{i}(\rr)$ for $i\in \{1,2\}$. With a little abuse, we will adopt the notation in \eqref{p1p2} also to denote the infimum (resp. the supremum) of $p(\cdot)$ on $B_{R}^{+}$: the context will remove any ambiguity. Notice that there is no loss of generality in assuming $\gamma_{1}<\gamma_{2}$, otherwise $p(\cdot)\equiv \const$ on $\bar{\Omega}$, and in this case the problem is very well understood, \cite{fu3,harlin,shouhlb}. Furthermore, we need to impose some topological restriction on the manifold $\m$. Precisely, we ask that
\begin{flalign}\label{m}
\begin{cases}
\ \m \ \mbox{is a compact,} \ m\mbox{-dimensional}, \ C^{3} \ \mbox{Riemannian submanifold of} \ \RN\\
\ \m \ \mbox{is} \ [\gamma_{2}]-1 \ \mbox{connected}\\
\ \partial \m=\emptyset.
\end{cases}
\end{flalign}
Here $[x]$ denotes the integer part of $x$ and the definition of $j$-connectedness is given in Section~\ref{exsec}, Definition \ref{def:j-con}. Moreover, we assume that the boundary datum satisfies:
\begin{flalign}\label{g}
g\in W^{1,q}(\bar{\Omega},\m)\quad \mbox{for some} \ \ q>\max\left\{n,\gamma_{2}\right\}.
\end{flalign}
Combining $\eqref{g}$ with Morrey embedding theorem we automatically get that
\begin{flalign}\label{reg}
g\in C^{0,1-\frac{n}{q}}(\bar{\Omega},\m).
\end{flalign}
Finally, to shorten the notation we shall collect the main parameters of the problem in the quantities
\begin{flalign*}
&\texttt{data}_{p(\cdot)}:=(n,N,\m,\lambda,\Lambda,\gamma_{1},\gamma_{2},q,[p]_{0,\alpha},\alpha);\\
&\texttt{data}:=(n,N,\m,\lambda,\Lambda,\gamma_{1},\gamma_{2},q,[k]_{0,\nu},[p]_{0,\alpha},\nu,\alpha).
\end{flalign*}
Any dependencies of the constants appearing in the forthcoming estimates from quantities depending on the characteristics of $\m$, such as, for instance, the $L^{\infty}$-norm of maps with range in $\m$ (which is clearly finite being $\m$ compact) will be simply denoted as a dependency from $\m$ in the form: $c\equiv c(\m)$.
\begin{remark}
\emph{Assumption \eqref{bdd} assures that there exists a positive constant $\hat{r}\equiv \hat{r}(n,\Omega)$ such that $B_{\rr}(x_{0})\cap \Omega$ is simply connected for all $\rr\in (0,\hat{r}]$ and any $x_{0}\in \partial \Omega$. This renders the existence of a positive constant $c\equiv c(n,\Omega)$ such that}
\begin{flalign*}
\frac{\mathcal{H}^{n-1}(B_{\rr}(x_{0})\cap \partial\Omega)}{\mathcal{H}^{n-1}(\partial B_{\rr}(x_{0})\cap \Omega)}>c\quad \mbox{for all} \ \ \rr\in (0,\hat{r}], \ x_{0}\in \partial \Omega.
\end{flalign*}
\emph{Moreover, the Ahlfors condition holds
\begin{flalign*}
\snr{B_{\rr}(x_{0})\cap \Omega}\sim \rr^{n}\quad \mbox{for all} \ \ x_{0}\in \bar{\Omega}, \ \rr\in (0,\hat{r}],
\end{flalign*}
with constants implicit in "$\sim$" depending on $n,\Omega$. We shall refer to such constants with the term "Ahlfors constants", see \cite[Section 2]{dugrkr}.}
\end{remark}

As to fully clarify the framework we are going to adopt, we need to introduce some basic terminology on the so-called Musielak-Orlicz-Sobolev spaces. Essentially, these are Sobolev spaces defined by the fact that the distributional derivatives lie in a suitable Musielak-Orlicz space, rather than in a Lebesgue space as usual. Classical Sobolev spaces are then a particular case. Such spaces and related variational problems are discussed for instance in \cite{o2,o1,dhroblemsr,zhi4}, to which we refer for more details. Here, we will consider spaces related to the variable exponent case in both unconstrained and manifold-constrained settings. 
\begin{definition} \label{N11}
Given an open set $\Omega \subset \mathbb{R}^{n},$ the Musielak-Orlicz space $L^{p(\cdot)}(\Omega,\mathbb{R}^{k})$, $k\ge 1$, with $p(\cdot)$ satisfying \eqref{asp}, is defined as
\begin{flalign*}
L^{p(\cdot)}(\Omega,\mathbb{R}^{k}):=\left \{ \ w\colon \Omega \to \mathbb{R}^{k} \ \mbox{measurable  and} \ \int_{\Omega}\snr{w}^{p(x)}\ \dx <\infty \ \right\}
\end{flalign*}
endowed with the Luxemburg norm $\|w\|_{L^{p(\cdot)}(\Omega,\mathbb{R}^{k})}=\inf\{\lambda>0:\ \int_\Omega |w/\lambda|^{p(x)}\ \dx<1\}$.
Consequently, 
\begin{flalign*}
W^{1,p(\cdot)}(\Omega,\mathbb{R}^{k}):=\left \{ \ w\in W^{1,1}(\Omega,\mathbb{R}^{k})\cap L^{p(\cdot)}(\Omega,\mathbb{R}^{k})\ \mbox{such  that}\ | Dw| \in L^{p(\cdot)}(\Omega, \mathbb{R}^{k\times n}) \ \right\}
\end{flalign*}
with the norm $\|w\|_{W^{1,p(\cdot)}(\Omega,\mathbb{R}^{k})}=\|w\|_{L^{p(\cdot)}(\Omega,\mathbb{R}^{k})}+\|\,|Dw|\,\|_{L^{p(\cdot)}(\Omega,\mathbb{R}^{k})}. $ The variant $W^{1,p(\cdot)}_{loc}(\Omega,\mathbb{R}^{k})$ is defined as in the classical case, whereas  $W^{1,p(\cdot)}_{0}(\Omega,\mathbb{R}^{k})$ is a closure of smooth and compactly supported functions in the norm  $\|\cdot\|_{W^{1,p(\cdot)}(\Omega,\mathbb{R}^{k})}$.
\end{definition}
It is well known that, under assumptions \eqref{asp}, the set of smooth maps is dense in $W^{1,p(\cdot)}(\Omega,\mathbb{R}^{k})$, see e.g. \cite{sharp,zhi4}. Following \cite{decv} we also recall the analogous definition of such spaces when mappings take values into $\m$.
\begin{definition}\label{N12}
Let $\m$ be a compact submanifold of $\mathbb{R}^{k}$, $k\ge 2$, without boundary and $\Omega \subset \mathbb{R}^{n}$ an open set. For $p(\cdot)$ satisfying $\eqref{asp}$, the Musielak-Orlicz-Sobolev space $W^{1,p(\cdot)}(\Omega,\m)$ of functions into $\m$ can be defined as
\begin{flalign*}
W^{1,p(\cdot)}(\Omega,\m):=\left\{ \ w \in W^{1,p(\cdot)}( \Omega,\mathbb{R}^{k})\colon \ w(x) \in \m \ \mbox{for a.e.} \ x \in \Omega \ \right\}.
\end{flalign*}
The local space $W^{1,p(\cdot)}_{loc}(\Omega,\m)$ consists of maps belonging to $W^{1,p(\cdot)}(B,\m)$ for all open sets $B\Subset \Omega$.
\end{definition}
Of course, when $p(\cdot)\equiv \const$, Definitions \ref{N11} and \ref{N12} reduce to the classical Sobolev spaces $W^{1,p}(\Omega,\mathbb{R}^{k})$ and $W^{1,p}(\Omega,\m)$ respectively. Since the regularity question in $\Omega$ is local in nature, we can choose coordinates $\{x^{i}\}_{i=1}^{n}$ centered at $x_{0}\in \partial \Omega$ such that locally $\Omega$ is the upper half space $\mathbb{R}^{n}\cap\{x^{n}>0\}$, therefore, to avoid unnecessary complications, from now on we will assume that $\Omega\equiv B_{1}^{+}$, see \cite{dugrkr,dukrmi,harlin,jm,km,shouhlb} for a more detailed discussion on this matter. Let us display the definition of constrained $W^{1,p(\cdot)}$-minimizer of \eqref{cvp} in $B_{1}^{+}$.
\begin{definition}\label{D1}
Let $g\in W^{1,q}(\bar{B}_{1}^{+},\m)$ be as in \eqref{g}. A map $u\in W^{1,p(\cdot)}(B_{1}^{+},\m)$ with $\emph{\texttt{tr}}_{\Gamma_{1}}(u)=g$, is a constrained minimizer of the functional in \eqref{cvp} in the Dirichlet class $\mathcal{C}_{g}^{p(\cdot)}(B_{1}^{+},\m)$ provided that:
\begin{flalign*}
x\mapsto k(x)\snr{Du(x)}^{p(x)}\in L^{1}(B_{1}^{+}), \qquad \emph{\texttt{tr}}_{\Gamma_{1}}(u)=\emph{\texttt{tr}}_{\Gamma_{1}}(g)
\end{flalign*}
and
\begin{flalign*}
\mathcal{E}(u,B_{1}^{+})\le \mathcal{E}(w,B_{1}^{+})
\end{flalign*}
for all maps $w\in W^{1,p(\cdot)}(B_{1}^{+},\m)$ so that $(u-w)\in W^{1,p(\cdot)}_{0}(B_{1}^{+},\mathbb{R}^{N})$. The conditions displayed above define class $\mathcal{C}_{g}^{p(\cdot)}(B_{1}^{+},\m)$. 
\end{definition}
To shorten the notation, for $\rr\in (0,1]$, $x_{0}\in \mathbb{R}^{n}\cap \left\{x^{n}\ge 0\right\}$, $f\in W^{1,p(\cdot)}(\bar{B}_{\rr}^{+}(x_{0}),\m)$ and a subset $\mathcal{X}\subseteq \mathbb{R}^{N}$, we also introduce the general Dirichlet class 
\begin{flalign*}
\hat{\mathcal{C}}^{p(\cdot)}_{f}(B_{\rr}^{+}(x_{0}),\mathcal{X}):=f+\left(W^{1,p(\cdot)}(B_{\rr}^{+}(x_{0}),\mathcal{X})\cap W^{1,p(\cdot)}_{0}(B_{\rr}^{+}(x_{0}),\RN)\right).
\end{flalign*}
Clearly, the previous position makes sense also when $p(\cdot)\equiv \const$.
\subsection{Well-known results} When dealing with $p$-Laplacean type problems, we shall often use the auxiliary vector fields $V_{s,t}\colon \er^{N\times n} \to  \er^{N\times n}$, defined by
\begin{flalign}\label{vpvqm}
V_{s,t}(z):= (s^{2}+|z|^{2})^{(t-2)/4}z, \qquad t\in (1,\infty)\ \ \mbox{and}\ \ s\in [0,1]
\end{flalign}
whenever $z \in \er^{N\times n}$. If $s=0$ we shall simply write $V_{s,t}\equiv V_{t}$. A useful related inequality is contained in the following
\begin{flalign}\label{Vm}
\snr{V_{s,t}(z_{1})-V_{s,t}(z_{2})}\approx (s^{2}+\snr{z_{1}}^{2}+\snr{z_{2}}^{2})^{(t-2)/4}\snr{z_{1}-z_{2}}, 
\end{flalign}
where the equivalence holds up to constants depending only on $n,k,t$. An important property which is usually related to such field is recorded in the following lemma.
\begin{lemma}\label{l6}
Let $t>-1$, $s\in [0,1]$ and $z_{1},z_{2}\in \mathbb{R}^{N\times n}$ be so that $s+\snr{z_{1}}+\snr{z}_{2}>0$. Then
\begin{flalign*}
\int_{0}^{1}\left[s^2+\snr{z_{1}+\lambda(z_{2}-z_{1})}^{2}\right]^{\frac{t}{2}} \ \d\lambda\sim (s^2+\snr{z_{1}}^{2}+\snr{z_{2}}^{2})^{\frac{t}{2}},
\end{flalign*}
with constants implicit in "$\sim$" depending only on $n,N,t$.
\end{lemma}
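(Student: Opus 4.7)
The plan is to reduce the estimate, via scaling and an orthogonal decomposition, to a one-variable integral inequality that is proved by elementary case analysis.

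Both sides of the claimed equivalence are positively homogeneous of degree $t$ in $(s,z_1,z_2)$, so after rescaling by $(s^2+|z_1|^2+|z_2|^2)^{1/2}>0$ we may assume $s^2+|z_1|^2+|z_2|^2=1$. Writing $z_\lambda:=z_1+\lambda(z_2-z_1)$ and $g(\lambda):=s^2+|z_\lambda|^2$, the claim reduces to $c(t)\leq\int_0^1 g(\lambda)^{t/2}\,d\lambda\leq C(t)$. By convexity of $|\cdot|^2$ one has $|z_\lambda|^2=|(1-\lambda)z_1+\lambda z_2|^2\leq(1-\lambda)|z_1|^2+\lambda|z_2|^2\leq|z_1|^2+|z_2|^2$, hence $g\leq 1$ on $[0,1]$; this already gives the upper bound when $t\geq 0$ and the lower bound when $t\leq 0$.

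For the opposite inequality the idea is to transform the integral into a one-variable model. Set $v:=z_2-z_1$; the case $v=0$ is trivial since $g$ is then constant. Decompose $z_1=\mu v+w$ with $w\perp v$ and $\mu:=\langle z_1,v\rangle/|v|^2$. Setting $A^2:=s^2+|w|^2$, a direct computation gives $g(\lambda)=A^2+(\lambda+\mu)^2|v|^2$, and the affine substitution $u=(\lambda+\mu)|v|$ produces
\[
\int_0^1 g(\lambda)^{t/2}\,d\lambda=\frac{1}{b-a}\int_a^b(A^2+u^2)^{t/2}\,du,\qquad a:=\mu|v|,\ b:=(\mu+1)|v|.
\]
Expanding $|z_1|^2$ and $|z_2|^2$ in the same way one finds $s^2+|z_1|^2+|z_2|^2=2A^2-s^2+a^2+b^2$, so that $A^2+a^2+b^2\sim 1$, and the problem reduces to the purely one-dimensional estimate
\[
\frac{1}{b-a}\int_a^b(A^2+u^2)^{t/2}\,du\,\sim\,(A^2+a^2+b^2)^{t/2}
\]
valid for all $a<b$ in $\mathbb{R}$ and $A\geq 0$, with constants depending only on $t>-1$.

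For $t\geq 0$ the missing lower bound is a short calculus argument: since $g(0)+g(1)=2s^2+|z_1|^2+|z_2|^2\geq 1$ we have $\max(g(0),g(1))\geq 1/2$, and assuming without loss of generality $g(0)\geq 1/2$, the uniform estimate $|g'(\lambda)|\leq 2|z_\lambda||v|\leq 2\sqrt 2$ on $[0,1]$ keeps $g\geq 1/4$ on an interval $[0,\delta_0]$ with $\delta_0>0$ an absolute constant, yielding $\int_0^1 g^{t/2}\,d\lambda\geq c(t)>0$. For $t\in(-1,0)$ the missing upper bound is proved by splitting $[a,b]$ at $0$ and at $\pm A$ and exploiting the pointwise majorant $(A^2+u^2)^{t/2}\leq\min\{A^t,|u|^t\}$; the resulting power-law integrals are controlled through $\int_0^X|u|^t\,du=X^{t+1}/(t+1)$, whose finiteness is exactly the content of the hypothesis $t>-1$.

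The principal technical obstacle lies in this last case: in the regime in which $A$ is small and $[a,b]$ crosses the origin, the integrand develops a singularity of order $|u|^t$, and one must carefully balance its rate of explosion against the length $b-a$ to obtain the required bound $C(t)(A^2+a^2+b^2)^{t/2}(b-a)$; this is precisely where the threshold $t>-1$ becomes indispensable.
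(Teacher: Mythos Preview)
The paper does not prove this lemma; it is stated in Section~2.3 (``Well-known results'') without argument or specific citation, as a standard technical fact from regularity theory. Your proof is therefore not comparable to anything in the paper, but it is correct and self-contained.

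Your approach --- normalising by homogeneity, handling the easy inequalities via the convexity bound $g\le 1$, and reducing the hard ones to a one-dimensional model via the orthogonal decomposition $z_1=\mu v+w$ --- is the natural elementary route. A few remarks on presentation. First, in the $t\ge 0$ lower bound you silently use $|z_\lambda|\le 1$ and $|v|\le\sqrt{2}$; these follow from the normalisation but deserve one line. Second, the last paragraph is a sketch rather than a proof: for $t\in(-1,0)$, after the one-dimensional reduction and a further scaling to $A^2+a^2+b^2=1$, the case split should be made explicit. If $A^2\ge 1/3$ the integrand is bounded by $(1/3)^{t/2}$ and you are done. If $A^2<1/3$ then $\max(|a|,|b|)\ge c_0>0$, and either $a,b$ have the same sign (so $|u|\ge c_0/2$ on at least half the interval, or else $b-a\ge c_0/2$ and $\int_a^b|u|^t\,du\le b^{t+1}/(t+1)\le 1/(t+1)$), or $a<0<b$, in which case $b-a\ge c_0$ while $\int_a^b|u|^t\,du\le 2/(t+1)$. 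In each subcase the quotient is bounded by a constant depending only on $t$. This is exactly what you outline, but writing out the three lines would make the argument complete rather than indicative.
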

The next are a couple of simple inequalities which will be used several times throughout the paper. They are elementary, see e.g.: \cite{cosmin,decv,rata, tac}.
\begin{lemma}\label{L0}
The following inequalities hold true.
\begin{itemize}
\item[\emph{i.}] For any $\varepsilon_{0}>0$, there exists a constant $c\equiv c(\varepsilon_{0})$ such that for all $t\ge 0$, $l\ge m\ge 1$ there holds
$
\snr{t^{l}-t^{m}}\le c(l-m)\left(1+t^{(1+\varepsilon_{0})l}\right).
$
\item[\emph{ii.}] For $t \in (0,1]$, consider the function $g_{1}(t):=t^{\tilde{c}t^{\gamma}}$, where $\tilde{c}$ is an absolute real constant and $\gamma \in (0,1]$. Then $\lim_{t\to 0}g_{1}(t)=1$ and $\sup_{t \in (0,1]}g_{1}(t)\le c(\tilde{c},\gamma)$. Via the substitution $t\mapsto t^{-1}$, we have an analogous property for the function $[1,\infty)\ni t\mapsto g_{2}(t):=t^{\tilde{c}t^{-\gamma}}$, for $\tilde{c}$ and $\gamma$ as before. Precisely there holds that $\lim_{t\to \infty}g_{2}(t)=1$ and $\sup_{t\in [1,\infty)}g_{2}(t)\le c(\tilde{c},\gamma)$.
\end{itemize}
\end{lemma}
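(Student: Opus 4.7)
For part (i), I would split the analysis into the regimes $t\in[0,1]$ and $t>1$, the trivial cases $t=0$ and $t=1$ being immediate since the left-hand side vanishes. In the sub-unit regime, $t^l\le t^m$ so $|t^l-t^m|=t^m(1-t^{l-m})$. Applying the mean value theorem to $s\mapsto t^s$ (or the elementary bound $1-\tau\le -\ln\tau$ for $\tau\in(0,1]$) gives $1-t^{l-m}\le (l-m)(-\ln t)$, and the standard estimate $t^m|\ln t|\le t|\ln t|\le 1/e$ for $m\ge 1$, $t\in(0,1]$ bounds the whole expression by $(l-m)/e$, which is controlled by the right-hand side. In the regime $t>1$, the mean value theorem applied to $s\mapsto t^s$ yields $t^l-t^m=(l-m)t^\xi\ln t$ for some $\xi\in(m,l)$, so $|t^l-t^m|\le (l-m)t^l\ln t$. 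The remaining point is to dominate $\ln t$ by a power of $t$: since $\ln t\le C(\varepsilon_0)\,t^{\varepsilon_0}$ for all $t\ge 1$ with a constant depending only on $\varepsilon_0$, and since $l\ge 1$, we infer $t^l\ln t\le C(\varepsilon_0)t^{(1+\varepsilon_0)l}$, completing the estimate.

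For part (ii), I would pass to logarithms: $\ln g_1(t)=\tilde c\, t^\gamma\ln t$ for $t\in(0,1]$. Since $\gamma>0$, the classical asymptotic $t^\gamma\ln t\to 0$ as $t\to 0^+$ gives $g_1(t)\to 1$. The supremum bound follows from the observation that $t^\gamma\ln t$ attains its extremum on $(0,1]$ at $t=e^{-1/\gamma}$, where $|t^\gamma\ln t|=1/(\gamma e)$, so $|\ln g_1(t)|\le |\tilde c|/(\gamma e)$ uniformly on $(0,1]$, and exponentiation yields the desired $c(\tilde c,\gamma)$. The corresponding claims for $g_2$ on $[1,\infty)$ follow verbatim by the substitution $t\mapsto 1/t$, which is exactly what the lemma indicates.

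Neither part presents a genuine obstacle: the whole lemma is a pair of calibrated calculus estimates. The only mildly delicate point is part (i) in the regime $t>1$, where one must absorb the logarithm into the power $t^{(1+\varepsilon_0)l}$ uniformly in $l\ge 1$; the right strategy is to exploit $l\ge 1$ so that any positive $\varepsilon_0 l$-power of $t$ already dominates $\ln t$ on $[1,\infty)$ with a constant depending only on $\varepsilon_0$.
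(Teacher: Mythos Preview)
Your proposal is correct and complete. The paper itself does not give a proof of Lemma~\ref{L0} at all; it merely declares the inequalities ``elementary'' and refers to \cite{cosmin,decv,rata,tac}. Your argument---splitting part~(i) into the regimes $t\in[0,1]$ and $t>1$, using the mean value theorem together with the bound $\ln t\le C(\varepsilon_0)t^{\varepsilon_0}\le C(\varepsilon_0)t^{\varepsilon_0 l}$ for $t\ge1$ (exploiting $l\ge1$), and handling part~(ii) by computing the extremum of $t^\gamma\ln t$ on $(0,1]$---is exactly the standard route and matches what one finds in the cited references.
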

We conclude this section by recalling the celebrated iteration lemma, \cite{giu}.
\begin{lemma}\label{iter}
Let $h\colon [\varrho, R_{0}]\to \mathbb{R}$ be a non-negative, bounded function and $0<\theta<1$, $0\le A$, $0<\beta$. Assume that
$
h(r)\le A(d-r)^{-\beta}+\theta h(d),
$
for $\varrho \le r<d\le R_{0}$. Then
$
h(\varrho)\le cA/(R_{0}-\varrho)^{-\beta}
$ holds, 
where $c\equiv c(\theta, \beta)>0$.
\end{lemma}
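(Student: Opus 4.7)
The plan is to iterate the hypothesis along a geometric sequence of radii converging up to $R_{0}$. I would fix a parameter $\tau\in(0,1)$, to be determined later, and set $r_{0}:=\varrho$ and $r_{i+1}:=r_{i}+(1-\tau)\tau^{i}(R_{0}-\varrho)$. This sequence is increasing, contained in $[\varrho,R_{0}]$, converges to $R_{0}$, and satisfies $r_{i+1}-r_{i}=(1-\tau)\tau^{i}(R_{0}-\varrho)$.

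Applying the assumed inequality with $r=r_{i}$ and $d=r_{i+1}$ gives
$$h(r_{i})\le A(1-\tau)^{-\beta}\tau^{-i\beta}(R_{0}-\varrho)^{-\beta}+\theta\, h(r_{i+1}).$$
Iterating this bound $k$ times, one obtains by a straightforward induction
$$h(\varrho)\le A(1-\tau)^{-\beta}(R_{0}-\varrho)^{-\beta}\sum_{i=0}^{k-1}\bigl(\theta\,\tau^{-\beta}\bigr)^{i}+\theta^{k}h(r_{k}).$$

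The key step is now to choose $\tau$ so that the geometric series converges: it suffices to pick any $\tau\in(\theta^{1/\beta},1)$, which forces $\theta\,\tau^{-\beta}<1$. Passing to the limit $k\to\infty$, the boundedness of $h$ on $[\varrho,R_{0}]$ ensures $\theta^{k}h(r_{k})\to 0$, and the remaining geometric series is finite. This yields
$$h(\varrho)\le \frac{(1-\tau)^{-\beta}}{1-\theta\tau^{-\beta}}\,A\,(R_{0}-\varrho)^{-\beta},$$
which is the desired estimate with $c\equiv c(\theta,\beta)$. There is no real obstacle here: the only subtlety is the admissibility of the choice of $\tau$, which uses both $0<\theta<1$ and $\beta>0$, and the telescoping/summation of the iterated inequality, which is purely algebraic.
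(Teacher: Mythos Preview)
Your argument is correct and is precisely the classical iteration proof of this lemma; the paper does not give its own proof but simply cites Giusti's book, where exactly this geometric-radii argument appears. There is nothing to add.
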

\subsection{Extensions}\label{exsec}
In this section we shall borrow from \cite{decv} some useful lemmas concerning locally Lipschitz retractions. Such results were first introduced in \cite{harlin} and intensively used in the literature for dealing with possibly non-homogeneous variational problems whose structure is a priori non-compatible with any kind of monotonicity formulae, \cite{demi1,ho}. We refer to Remark~\ref{nomo} below for a quick discussion on this matter. We start with clarifying a key assumption in our paper, which is the concept of $j$-connectedness.
\begin{definition}\label{def:j-con}
Given an integer $j\ge 0$, a manifold $\m$ is said to be $j$-connected if its first $j$ homotopy groups vanish identically, that is $\pi_{0}(\m)=\pi_{1}(\m)=\cdots=\pi_{j-1}(\m)=\pi_{j}(\m)=0$.
\end{definition}
It is well-known that a compact manifold $\m\subset \RN$ without boundary admits a tubular neighborhood $\m\subset\omega\subset \RN$. Identifying $\m$ with its image in $\RN$, we say that a neighborhood $\omega$ of $\m$ has the nearest point property if for every $x\in \omega$ there is a unique point $\Pi_{\m}(x)\in \m$ such that $\dist(x,\m)=\snr{x-\Pi_{\m}(x)}$. The map $\Pi_{\m}\colon \omega\to \m$ is called the retraction onto $\m$, we shall refer to it also as "projector". Moreover, the regularity of $\m$ influences the regularity of $\Pi_{\m}$ in the following way:
\begin{flalign}\label{proreg}
\m \ \ \mbox{is $C^{k}$-regular for} \ \ k\ge 2\Rightarrow \Pi_{\m}\in C^{k-1}(\omega,\m),
\end{flalign}
see \cite{ho} for a deeper discussion on this matter. It is important to stress that manifolds endowed with the relatively simple topology described by Definition \ref{def:j-con} enjoy good properties in terms of retractions.
\begin{lemma}\label{hoplem}
Let $\mathcal{M}\subset \RN$ be a compact, $j$-connected submanifold for some integer $j \in \{0,\cdots, N-2\}$ contained in an $N$-dimensional cube $Q$. Then there exists a closed $(N-j-2)$-dimensional Lipschitz polyhedron $X\subset Q\setminus \mathcal{M}$ and a locally Lipschitz retraction $\psi\colon Q\setminus X \to \mathcal{M}$ such that for any $x \in Q\setminus X$,
$
\snr{D\psi (x)}\le c/\dist(x,X)
$ holds, 
for some positive $c\equiv c(N,j,\mathcal{M})$.
\end{lemma}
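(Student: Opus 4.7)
The plan is to mimic the Hardt--Lin construction used for $p$-harmonic maps \cite{harlin}: build $\psi$ on a cellular decomposition of $Q$ by inductive extension, using the $j$-connectedness of $\m$ as the algebraic input that removes the obstruction on skeleta of dimension $\leq j+1$, and absorb the higher-dimensional obstructions into a codimension $(j+2)$ polyhedral singular set $X$ via a radial-type extension.

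First, I would subdivide $Q$ into a cubical grid of mesh $\varepsilon>0$, chosen small enough that every cell meeting the $\varepsilon$-tubular neighbourhood of $\m$ lies in the domain $\omega$ of the smooth nearest-point projector $\Pi_{\m}$, which is Lipschitz by \eqref{proreg}. Denote by $K^{(d)}$ the $d$-skeleton of the resulting complex $K$. On the subcomplex of cells near $\m$ set $\psi:=\Pi_{\m}$; away from $\m$, define $\psi$ at every vertex of $K^{(0)}$ to be an arbitrary fixed base point $m_{0}\in \m$. Extend inductively over $K^{(1)}, K^{(2)}, \dots, K^{(j+1)}$: at each stage, having constructed a Lipschitz $\psi$ on $K^{(d-1)}$, for every $d$-cell $\sigma$ with $d\le j+1$ the restriction $\psi|_{\partial\sigma}$ represents an element of $\pi_{d-1}(\m)=0$ by \eqref{m}, so a Lipschitz extension into $\m$ exists, with quantitative Lipschitz constant depending only on $N$, $j$ and $\m$.

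For cells of dimension $d\ge j+2$ continuous extension is in general obstructed. The device is to select an interior point $p_{\sigma}\in\mathrm{int}\,\sigma$ in every such cell and extend $\psi$ to $\sigma\setminus\{p_{\sigma}\}$ by radial (degree-zero homogeneous) extrapolation from $\psi|_{\partial\sigma}$; this gives $\snr{D\psi(x)}\lesssim \snr{x-p_{\sigma}}^{-1}$ on $\sigma$. Defining $X$ as the dual skeleton, obtained by joining the interior points $p_{\sigma}$ through the interior centres of lower-dimensional cells, produces a closed Lipschitz polyhedron of dimension exactly $N-j-2$, disjoint from $\m$ by construction. Since $\dist(x,X)$ is controlled below by $\snr{x-p_{\sigma}}$ on each cell, the individual radial estimates aggregate into $\snr{D\psi(x)}\le c/\dist(x,X)$ throughout $Q\setminus X$ with $c=c(N,j,\m)$.

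The main obstacle is the quantitative Lipschitz extension in the inductive step: passing from the purely homotopy-theoretic vanishing $\pi_{d-1}(\m)=0$ to a Lipschitz extension with constants depending only on $N$, $j$ and the geometry of $\m$ requires either a Whitehead-type simplicial approximation argument or the explicit extension procedure of \cite{harlin}. A secondary technicality is matching the radial extensions on adjacent top-dimensional cells along their common boundaries so that the final map is globally well-defined and locally Lipschitz on $Q\setminus X$; once this compatibility is arranged, the derivative bound is essentially forced by the radial structure.
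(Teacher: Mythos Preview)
Your proposal correctly outlines the Hardt--Lin construction, which is precisely the argument the paper defers to: the paper gives no self-contained proof here, but simply cites \cite[Lemma~6.1]{harlin} for the original argument and \cite[Lemma~4.5]{ho} for a streamlined version based on Lipschitz extensions between Riemannian manifolds. Your sketch matches that route; the two technical points you flag (quantitative Lipschitz extension from $\pi_{d-1}(\m)=0$ and compatibility of the radial extensions across adjacent cells) are exactly the issues handled in those references.
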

\begin{proof}
We refer to \cite[Lemma 6.1]{harlin} for the original proof, or \cite[Lemma 4.5]{ho} for a simplified version relying on some Lipschitz extensions of maps between Riemannian manifolds.
\end{proof}
The next lemma allows modifying the image of a map while keeping under control boundary values and $p(\cdot)$-energy, see also \cite[Lemma 5]{decv}. 
\begin{lemma}\label{exlem}
Let $\mathcal{M}$ be as in \eqref{m} and $U\subseteq B_{1}^{+}$ a subset with positive measure and piecewise $C^{1}$-regular boundary. If $w \in W^{1,p(\cdot)}(U,\RN)\cap L^{\infty}(U, \RN)$ is so that its image lies in $\m$ in a neighborhood of $\partial U$, then there exists $\tilde{w}\in w+\hat{\mathcal{C}}^{p(\cdot)}_{u}(U,\m)$ satisfying
$$
\int_{U}\snr{D\tilde{w}}^{p(x)} \ \dx\le c\int_{U}\snr{Dw}^{p(x)} \ \dx,
$$
where $c\equiv c(N,\mathcal{M},\gamma_{2})$.
\end{lemma}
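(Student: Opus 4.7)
The argument adapts the translation-and-projection scheme of Hardt-Lin, in the same spirit as \cite[Lemma 5]{decv} where the interior analog is handled; the variable exponent only affects the quantitative absorption of a ``cutoff error'' at the end. Since $w\in L^\infty(U,\RN)$, after a preliminary $1$-Lipschitz radial projection onto a closed ball $\overline{B_R}\supset \m$ (with $R=R(\m)$), we may assume that the essential range of $w$ lies in a fixed cube $Q\supset \m$ depending only on $\m$; this projection does not enlarge the gradient and leaves $w$ unchanged on the region where it already takes values in $\m$. By assumption \eqref{m}, $\m$ is $([\gamma_{2}]-1)$-connected, so Lemma \ref{hoplem} supplies a closed $(N-[\gamma_{2}]-1)$-dimensional Lipschitz polyhedron $X\subset Q\setminus\m$ and a locally Lipschitz retraction $\psi\colon Q\setminus X\to \m$ with $|D\psi(y)|\le c_{\psi}/\dist(y,X)$. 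Put $d_{0}:=\dist(\m,X)>0$.

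Let $W\subset U$ be a relatively open neighborhood of $\partial U$ on which $w\in\m$, pick nested neighborhoods $V'\Subset V\Subset W$ of $\partial U\cap\overline{U}$, and fix a cutoff $\eta\in C^{\infty}_{c}(U,[0,1])$ with $\eta\equiv 0$ on $V'$ and $\eta\equiv 1$ on $U\setminus V$. For $\delta\in(0,d_{0}/2)$ and $a\in B_{\delta}(0)\subset\RN$ define the candidate
$$\tilde w_{a}(x):=\psi\bigl(w(x)+\eta(x)\,a\bigr).$$
Since $X$ has positive codimension in $\RN$, Fubini ensures that $\tilde w_{a}$ is defined a.e.\ on $U$ for a.e.\ $a\in B_{\delta}(0)$. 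On $V'$ we have $\tilde w_{a}=\psi(w)=w$, hence $\tilde w_{a}-w\in W^{1,p(\cdot)}_{0}(U,\RN)$ for every admissible $a$, so the correct Dirichlet class is automatic.

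The chain rule yields $|D\tilde w_{a}(x)|\le c_{\psi}\,\dist(w+\eta a,X)^{-1}(|Dw|+|a|\,|D\eta|)$. Splitting $U=V'\cup(V\setminus V')\cup(U\setminus V)$: on $V'$ the estimate is trivial; on $V\setminus V'$, $w\in\m$ and $|a|<d_{0}/2$ force $\dist(w+\eta a,X)\ge d_{0}/2$, so $|D\psi|$ is uniformly bounded; on $U\setminus V$ one has $D\eta\equiv 0$ and the full singular factor is in play. The key analytic ingredient is the Fubini-type inequality
$$\int_{B_{\delta}(z)}\dist(y,X)^{-s}\,dy\le c(N,\m,\gamma_{2})\,\delta^{N-s}\qquad\forall\,z\in Q,\ s\le \gamma_{2},$$
which follows from the tubular volume bound $|\{\dist(\cdot,X)<t\}\cap Q|\lesssim t^{[\gamma_{2}]+1}$ and the layer-cake formula, and whose validity rests precisely on the strict inequality $s\le\gamma_{2}<[\gamma_{2}]+1=\mathrm{codim}(X)$ --- the quantitative payoff of the $([\gamma_{2}]-1)$-connectedness of $\m$. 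Averaging in $a\in B_{\delta}(0)$ then gives
$$\frac{1}{|B_{\delta}|}\int_{B_{\delta}(0)}\int_{U}|D\tilde w_{a}|^{p(x)}\,\dx\,da\le c_{1}\!\int_{U}|Dw|^{p(x)}\,\dx+c_{2}\,\delta^{\gamma_{1}}\!\int_{V\setminus V'}|D\eta|^{p(x)}\,\dx,$$
with $c_{1},c_{2}=c_{i}(N,\m,\gamma_{2})$. If $\int_{U}|Dw|^{p(x)}\dx=0$, then $w$ is locally constant and takes its $\m$-valued boundary trace as value, and $\tilde w:=w$ works. Otherwise, shrink $\delta$ so that the second summand is dominated by the first; a mean-value selection then produces $a_{\ast}\in B_{\delta}(0)$ with $\int_{U}|D\tilde w_{a_{\ast}}|^{p(x)}\dx\le 2c_{1}\int_{U}|Dw|^{p(x)}\dx$, and $\tilde w:=\tilde w_{a_{\ast}}$ proves the claim.

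The main subtlety is that the cutoff $\eta$, and hence the factor $|D\eta|^{p(\cdot)}$ in the error term, depends on the \emph{a priori} unknown region $W$ where $w\in\m$; it is crucial that this dependence be quarantined into the adaptive choice of $\delta$ in the selection step, so that the final constant $c$ retains its form $c=c(N,\m,\gamma_{2})$. The other structural point is the codimension inequality $p(x)<[\gamma_{2}]+1$ used in the Fubini bound: this is exactly the role played by the topological assumption \eqref{m} in making the construction work uniformly over all exponents $p(x)\in[\gamma_{1},\gamma_{2}]$.
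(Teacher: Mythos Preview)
The overall Hardt--Lin translation-and-retraction scheme is indeed the framework behind the proof in \cite[Lemma~5]{decv}, which the paper simply cites. However, your implementation with a \emph{domain-space} cutoff $\eta(x)$ contains a genuine gap. On the region $U\setminus V$ (where $\eta\equiv 1$), dividing your Fubini bound by $|B_\delta|$ yields
\[
\frac{1}{|B_\delta|}\int_{B_\delta(w(x))}\dist(y,X)^{-p(x)}\,dy \;\le\; c\,\delta^{-p(x)}\;\le\;c\,\delta^{-\gamma_2},
\]
so the constant multiplying $\int_U |Dw|^{p(x)}\,\dx$ after averaging is of order $\delta^{-\gamma_2}$, \emph{not} independent of $\delta$ as you claim. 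Consequently, the subsequent step of ``shrinking $\delta$'' to kill the $\int_{V\setminus V'}|D\eta|^{p(x)}\,\dx$ term forces $c_1\to\infty$, and the final constant ends up depending on $w$ through the geometry of the boundary layer $W$ --- precisely the dependence you wanted to ``quarantine''.

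The standard remedy is to replace the spatial cutoff $\eta(x)$ by a \emph{target-space} cutoff: fix $\chi\in C^\infty(\RN,[0,1])$ with $\chi\equiv 0$ on a tube of width $<d_0/4$ around $\m$ and $\chi\equiv 1$ outside a slightly larger concentric tube, and set
\[
\tilde w_a(x):=\psi\bigl(w(x)+\chi(w(x))\,a\bigr),\qquad a\in B_{d_0/4}.
\]
Where $w\in\m$ one has $\chi(w)=0$, hence $\tilde w_a=\psi(w)=w$ and boundary traces are preserved. In the transition zone $\{0<\chi(w)<1\}$ the point $w+\chi(w)a$ stays at distance $\ge d_0/2$ from $X$, so $|D\psi|$ is uniformly bounded, and crucially $|D(\chi(w)a)|\le |a|\,[D\chi]_{\infty}|Dw|\le C(\m)|Dw|$, so the cutoff contribution is already dominated by $|Dw|^{p(x)}$ with a constant depending only on $\m$. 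On $\{\chi(w)=1\}$ the full Hardt--Lin averaging runs with $\delta=d_0/4$ \emph{fixed}, producing a constant $c(N,\m,\gamma_2)$. No adaptive shrinking of $\delta$ is needed, and the stated dependence of $c$ follows.
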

\begin{remark}\label{nomo}
\emph{When dealing with manifold constrained minima of the $p$-Laplacean energy it is customary to recover the fundamental Caccioppoli inequality by exploiting the so-called monotonicity formula, see \cite{fu1,fu2,fu3,luc,sim,shouhl,shouhlb}. This way cannot be used in our case. Even though it is possible to show a monotonicity formula for the $p(x)$-energy, Lemma \ref{mono} below, see also \cite[Lemma 4.1]{tac} or \cite[Lemma 12]{decv}, its proof crucially requires some corollaries of Gehring Lemma, which, in turn, is implied by Caccioppoli inequality, whose proof requires the monotonicity formula. Lemma \ref{exlem} breaks this vicious circle giving the chance of deriving Caccioppoli inequality directly by minimality, as we will see in Section~\ref{basic}.}
\end{remark}
\section{Partial boundary regularity}\label{partial}
As mentioned in Section \ref{ma}, to avoid unnecessary complications, we shall take $\Omega\equiv B_{1}^{+}$. In fact, since $\partial \Omega$ is $C^{2}$-regular, given any $x_{0}\in \partial \Omega$, there exists an open neighborhood $B_{x_{0}}$ of $x_{0}$ and a change of variable $\Psi_{0}\in C^{2}(\bar{B}_{x_{0}},\mathbb{R}^{n})$ so that in the new coordinates $y^{i}:=\Psi_{0}^{i}(x)$ there holds that
\begin{flalign*}
\Psi_{0}(x_{0})=0,\qquad \Psi_{0}(\bar{B}_{x_{0}}\cap \bar{ \Omega})=\bar{B}_{1}^{+},\qquad  \Psi_{0}(\bar{B}_{x_{0}}\cap \partial \Omega)=\Gamma_{1}.
\end{flalign*}
Moreover, there exists a positive constant $c_{0}\equiv c_{0}(n,\partial \Omega)$ such that
\begin{flalign*}
0<c_{0}^{-1}\le \nr{D\Psi_{0}}_{L^{\infty}(\bar{B}_{x_{0}}\cap \bar{ \Omega})}\le c_{0}<\infty.
\end{flalign*}
We stress that, being $\partial \Omega$ compact, the constant $c_{0}$ does not depend from $x_{0}$. A straightforward computation shows that, if $u\in W^{1,p(\cdot)}(\Omega,\m)$ solves \eqref{cvp}, then the map $\tilde{u}:=u\circ \Psi_{0}^{-1}$ solves an analogous problem still satisfying \eqref{asp} and \eqref{ask}. Assumption \eqref{g} on the boundary condition is preserved as well: if $g\in W^{1,q}(\bar{\Omega},\m)$ then $\tilde{g}:=g\circ \Psi_{0}^{-1}\in W^{1,q}(\bar{B}_{1}^{+},\m)$.  We refer to \cite{dugrkr,harlin,jm} for more details on this matter. Therefore, keeping Definition \ref{D1} in mind, we shall study problem
\begin{flalign}\label{pd1}
\mathcal{C}_{g}^{p(\cdot)}(B_{1}^{+},\m)\ni w\mapsto \min \int_{B_{1}^{+}}k(x)\snr{Dw}^{p(x)} \ \dx,
\end{flalign}
with $k(\cdot)$ and $p(\cdot)$ as in \eqref{ask}-\eqref{asp} respectively and $g$ as in \eqref{g}.
\subsection{Basic regularity results}\label{basic}
We first fix a threshold radius $R_{*}\in (0,1]$ so that
\begin{flalign}\label{r*1}
0<R_{*}\le \min\left\{1,\left(\frac{\gamma_{1}^{2}}{4n[p]_{0,\alpha}}\right)^{\frac{1}{\alpha}},\left(\frac{\gamma_{1}q\left(1-\frac{n}{q}\right)}{4n[p]_{0,\alpha}}\right)^{\frac{1}{\alpha}}\right\}
\end{flalign}
and choose a $R\in (0,R_{*})$. Further restrictions on the size of $R_{*}$ will be imposed in Section \ref{t1p}. An immediate consequence of \eqref{r*1} is that, given any half-ball $B_{R}^{+}$ and all balls $B_{\rr}(x_{0})$ with $x_{0}\in B_{R}^{+}$ and $\rr\in (0,R-\snr{x_{0}})$, there holds 
\begin{flalign}\label{p1p2*}
\begin{cases}
\ p_{1}^{*}(x_{0},\rr)>p_{2}(x_{0},\rr)\\
\ \frac{np_{2}(x_{0},\rr)}{q}\le p_{1}(x_{0},\rr)
\end{cases} \ \  \mbox{for all} \ \ R\in (0,R_{*}], \ \ \rr\in (0,R-\snr{x_{0}}),
\end{flalign}
which is, on the other hand, automatic when $p_{1}(x_{0},\rr)\ge n$. Obviously, in \eqref{p1p2*} we adopted the usual terminology
\begin{flalign*}
p^{*}:=\begin{cases}
\ \frac{np}{n-p}\quad &\mbox{if} \ \ 1<p<n\\
\ \mbox{any finite number larger than} \ p\quad &\mbox{if} \ \ p\ge n.
\end{cases}
\end{flalign*}
Recall now that, if $B_{\rr}(x_{0})\Subset B_{R}^{+}$ and $w\in W^{1,p}(B_{\rr}(x_{0}),\RN)$ is such that $w\equiv 0$ on $U\subset B_{\rr}(x_{0})$ with $\snr{U}>\hat{c}\snr{B_{\rr}(x_{0})}$ for some positive, absolute $\hat{c}$, then Sobolev-Poincar\'e's inequality gives
\begin{flalign}\label{poi0}
\int_{B_{\rr}(x_{0})}\snr{w/r}^{p} \ \dx\le c\rr^{-n(p/p_{*}-1)}\left(\int_{B_{\rr}(x_{0})}\snr{Dw}^{p_{*}} \ \dx\right)^{\frac{p}{p_{*}}},
\end{flalign}
for $c\equiv c(n,N,p,\hat{c})$. Here $p_{*}:=\max\left\{1,\frac{np}{n+p}\right\}$. We consider now an intrinsic version of \cite[Theorem 2.4]{dugrkr}.
\begin{proposition}\label{p1}
Let $U\subset \mathbb{R}^{n}$ be an open, bounded domain with piecewise $C^{1}$-regular boundary and finite Ahlfors constants depending only from $n$. Let also $A\subset \bar{U}$ be a closed subset. Consider two non-negative functions $f_{1}\in L^{1}(U)$ and $f_{2}\in L^{1+\hat{\sigma}}(U)$ for some $\hat{\sigma}>0$. With $\theta\in (0,1)$, assume that there holds
\begin{flalign}\label{gehes}
\mint_{B_{\rr/2}(x_{0})\cap U}f_{1}\ \dx \le b\left\{\left(\mint_{B_{\rr}(x_{0})\cap U}f_{1}^{\theta} \ \dx\right)^{\frac{1}{\theta}}+\mint_{B_{\rr}(x_{0})\cap U}f_{2} \ \dx\right\}
\end{flalign}
for almost all $x_{0}\in U\setminus A$ with $B_{\rr}(x_{0})\cap A=\emptyset$ and a positive constant $b$. Set
\begin{flalign*}
d(x):=\frac{\snr{B_{\dist(x,A)}(x)\cap U}}{\snr{U}}\quad \mbox{and}\quad \tilde{f}_{1}(x):=d(x)f_{1}(x).
\end{flalign*}
Then there exists a positive threshold $\sigma_{g}\equiv \sigma_{g}(b,\theta,\hat{\sigma})\in (0,\hat{\sigma})$ such that
\begin{flalign*}
\left(\mint_{U}\tilde{f}_{1}^{1+\sigma} \ \dx\right)^{\frac{1}{1+\sigma}}\le c(n,\theta,b,\hat{\sigma})\left\{\left(\mint_{U}f_{1} \ \dx\right)+\left(\mint_{U}f_{2}^{1+\sigma} \ \dx\right)^{\frac{1}{1+\sigma}}\right\}
\end{flalign*}
for all $\sigma\in [0,\sigma_{g})$.
\end{proposition}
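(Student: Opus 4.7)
The plan is to adapt the classical Calder\'on-Zygmund stopping-time argument behind Gehring's lemma, following the approach of \cite[Theorem 2.4]{dugrkr}. The role of the weight $d(x)$ is clearest once one notices that the Ahlfors regularity of $U$ makes $d(x)$ essentially comparable to $\dist(x,A)^{n}/\snr{U}$, so that the conclusion amounts to a weighted higher integrability statement for $f_{1}$ in which the power of the distance to $A$ exactly compensates for the failure of the reverse H\"older inequality \eqref{gehes} on balls touching $A$.

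The first step would be to fix a threshold $\lambda_{0}\sim \mint_{U}f_{1}\ \dx+\left(\mint_{U}f_{2}^{1+\hat{\sigma}}\ \dx\right)^{\frac{1}{1+\hat{\sigma}}}$. For $\lambda>\lambda_{0}$ and each $x_{0}$ in the superlevel set $E_{\lambda}:=\left\{x\in U\colon \tilde{f}_{1}(x)>\lambda\right\}$, a stopping-time argument applied to averages of $f_{1}$ on balls centred at $x_{0}$ produces a maximal radius $\rho(x_{0})$, kept by construction below $\dist(x_{0},A)/2$, such that $\mint_{B_{\rho(x_{0})}(x_{0})\cap U}f_{1}\ \dx$ is comparable to $\lambda/d(x_{0})$. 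A Vitali-type selection then extracts a disjoint subfamily $\{B_{\rho_{i}}(x_{i})\}$ whose fixed enlargements still cover $E_{\lambda}$ up to a null set.

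The heart of the matter is the dichotomy one encounters at each stopping ball: either $B_{2\rho_{i}}(x_{i})\cap A=\emptyset$, so that \eqref{gehes} applies on the doubled ball and yields the standard reverse H\"older estimate
$$
\lambda \lesssim \left(\mint_{B_{2\rho_{i}}(x_{i})\cap U}f_{1}^{\theta}\ \dx\right)^{\frac{1}{\theta}}+\mint_{B_{2\rho_{i}}(x_{i})\cap U}f_{2}\ \dx,
$$
or else $\rho(x_{0})\sim \dist(x_{0},A)$, in which case $d$ is essentially constant on $B_{\rho_{i}}(x_{i})$ and the stopping condition forces the weighted average of $\tilde{f}_{1}$ on that ball to be controlled by $\lambda_{0}$---making such balls drop out of the covering for $\lambda$ large enough. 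Summing over the covering and splitting $f_{1}^{\theta}$ and $f_{2}$ into their sub- and superlevel parts at height $\eta\lambda$ (for a small $\eta\in(0,1)$) leads to a level-set inequality of the form
$$
\snr{E_{\lambda}}\le \frac{c}{\lambda^{1/\theta}}\int_{\{\tilde{f}_{1}>\eta\lambda\}}\tilde{f}_{1}^{1/\theta}\ \dx+\frac{c}{\lambda}\int_{\{f_{2}>\eta\lambda\}}f_{2}\ \dx.
$$

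Multiplying by $\lambda^{\sigma-1}$, integrating over $(\lambda_{0},\infty)$, and applying Fubini then delivers the desired weighted $L^{1+\sigma}$-bound, the unwanted $\int_{U}\tilde{f}_{1}^{1+\sigma}$-term arising on the right being absorbed into the left-hand side provided $\sigma$ is chosen smaller than some $\sigma_{g}\equiv\sigma_{g}(b,\theta,\hat{\sigma})$; this pins down the threshold exponent. The main technical obstacle, and the one genuinely new ingredient compared with the interior case, lies in the simultaneous control of the stopping radius and the weight in the regime $\rho(x_{0})\sim \dist(x_{0},A)$: one has to verify both that $\rho(x_{0})$ can always be chosen with $2\rho(x_{0})\le \dist(x_{0},A)$ when the first alternative is invoked, and that in the second alternative the smallness of $d(x)$ on $B_{\rho_{i}}(x_{i})$ is precisely the quantity making the corresponding ball irrelevant to the level set $E_{\lambda}$.
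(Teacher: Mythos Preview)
Your proposal is correct and takes essentially the same approach as the paper: the paper does not give a self-contained argument but simply refers to \cite[Theorem~2.4]{dugrkr} (and \cite[Lemma~6.2]{giu}), noting only that the minor modification required is that here \eqref{gehes} involves the full integrand. Your outline of the weighted Calder\'on--Zygmund stopping-time argument with the distance weight $d(x)$ is in fact considerably more detailed than what the paper itself provides.
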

\begin{proof}
The proof is essentially the same as the one in \cite{dugrkr} with minor changes due to the fact that, in our case, \eqref{gehes} involves the whole integrand; see also \cite[Lemma 6.2]{giu}.
\end{proof}
As a consequence of Proposition \ref{p1}, we derive some higher integrability results for solutions to problem \eqref{cvp}.
\begin{lemma}\label{geh}
Under assumptions \eqref{asp}, \eqref{ask}, \eqref{m} and \eqref{g}, let $u\in W^{1,p(\cdot)}(B_{R}^{+},\m)$ be a solution of problem \eqref{pd1}. Then, for $x_{0}\in \bar{B}^{+}_{R}$, with $R\in (0,R_{*}]$, $R_{*}$ as in \eqref{r*1} and $0<\rr<R-\snr{x_{0}}$, there exists a positive threshold $\sigma_{g}\equiv \sigma_{g}(\texttt{data}_{p(\cdot)},q)\in \left(0,\frac{q}{\gamma_{2}}-1\right)$ such that for all $\sigma\in (0,\sigma_{g})$ there holds that:
\begin{flalign}\label{bougeh2}
&\left(\mint_{B_{\rr/2}(x_{0})\cap B_{R}^{+}}(1+\snr{Du}^{2})^{\frac{p(x)(1+\sigma)}{2}} \ \dx\right)^{\frac{1}{1+\sigma}}\nonumber \\
&\qquad\le c\left[\mint_{B_{\rr}(x_{0})\cap B_{R}^{+}}(1+\snr{Du}^{2})^{\frac{p(x)}{2}} \ \dx+\left(\mint_{B_{\rr}(x_{0})\cap B_{R}^{+}}\snr{Dg}^{p(x)(1+\sigma)} \ \dx\right)^{\frac{1}{1+\sigma}}\right].
\end{flalign}
for $c\equiv c(\texttt{data}_{p(\cdot)},q)$. If $B_{\rr}(x_{0})\Subset B_{R}^{+}$ then, there exists a positive threshold $\sigma_{g}'\equiv \sigma_{g}'(\texttt{data}_{p(\cdot)})>0$ so that
\begin{flalign}\label{inngeh}
\left(\mint_{B_{\rr/2}(x_{0})}(1+\snr{Du}^{2})^{\frac{p(x)(1+\sigma)}{2}} \ \dx\right)^{\frac{1}{1+\sigma}}\le c\mint_{B_{\rr}(x_{0})}(1+\snr{Du}^{2})^{\frac{p(x)}{2}} \ \dx,
\end{flalign}
for all $\sigma\in (0,\sigma_{g}')$ with $c\equiv c(\texttt{data}_{p(\cdot)})$. In particular,
\begin{flalign}\label{glob}
\snr{Du}^{p(\cdot)(1+\sigma)}\in L^{1}(B_{R}^{+})\quad \mbox{for all} \ \ \sigma \in \left[0,\min\left\{\sigma_{g},\sigma_{g}'\right\}\right).
\end{flalign}
\end{lemma}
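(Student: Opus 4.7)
The plan is to reduce the statement to an application of the Gehring-type Proposition \ref{p1}. To do so I would derive a reverse H\"older inequality of the form \eqref{gehes} from a Caccioppoli inequality combined with Sobolev--Poincar\'e, all adapted to the variable exponent $p(x)$ and to the manifold constraint.

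\textbf{Step 1 (Caccioppoli).} Fix $x_0\in\bar B_R^+$ and $\rr\in(0,R-|x_0|)$, and a cutoff $\eta\in C_c^\infty(B_\rr(x_0))$ with $\eta\equiv 1$ on $B_{\rr/2}(x_0)$ and $|D\eta|\le c/\rr$. In the interior case $B_\rr(x_0)\Subset B_R^+$ I would set $w:=u-\eta(u-(u)_{B_\rr(x_0)})$; in the boundary case $B_\rr(x_0)\cap\Gamma_R\ne\emptyset$ I would set $w:=u-\eta(u-g)$, whose trace on $\Gamma_\rr(x_0)$ coincides with that of $u$ since $u=g$ there. In both cases $w$ agrees with $u$ on an annulus of positive measure, so in particular $w$ takes values in $\m$ in a neighbourhood of $\partial(B_\rr(x_0)\cap B_R^+)$; applying Lemma \ref{exlem} to $w$ produces an admissible competitor $\tilde w$ in the Dirichlet class of $u$ whose $p(\cdot)$-energy is controlled by that of $w$. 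Minimality then yields a Caccioppoli-type bound of the schematic form
\begin{flalign*}
\int_{B_{\rr/2}(x_0)\cap B_R^+}(1+|Du|^2)^{p(x)/2}\dx\le c\int_{B_\rr(x_0)\cap B_R^+}\left(1+\left|\frac{u-h}{\rr}\right|^{p(x)}+|Dg|^{p(x)}\right)\dx,
\end{flalign*}
where $h$ denotes $(u)_{B_\rr(x_0)}$ or $g$ according to the case, and the $|Dg|^{p(x)}$ term is absent in the interior situation. The use of Lemma \ref{exlem} here is the point emphasised in Remark \ref{nomo}: it bypasses the monotonicity formula usually invoked at this stage for $p$-harmonic maps.

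\textbf{Step 2 (Reverse H\"older).} Set $p_1:=p_1(x_0,\rr)$ and $p_2:=p_2(x_0,\rr)$. The oscillation $p_2-p_1\le[p]_{0,\alpha}\rr^{\alpha}$ lets me trade $p(x)$ for $p_1$ at the cost of multiplicative factors of the form $\rr^{-n(p_2-p_1)}$, which remain uniformly bounded thanks to the threshold \eqref{r*1} and Lemma \ref{L0}(ii). Sobolev--Poincar\'e \eqref{poi0} with the frozen exponent $p_1$ converts $\int|u-h|^{p_1}/\rr^{p_1}\dx$ into an integral of $|Du|^{(p_1)_*}$ where $(p_1)_*=\max\{1,np_1/(n+p_1)\}$. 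The condition $\eqref{p1p2*}_1$, equivalent to $(p_2)_*<p_1$, is precisely what ensures the exponent on the right-hand side is strictly below $p(x)$, so the inequality is a genuine reverse H\"older rather than a tautology. The boundary term $|Dg|^{p(x)}$ will play the role of $f_2$ in Proposition \ref{p1}; condition $\eqref{p1p2*}_2$ together with $g\in W^{1,q}$ from \eqref{g} guarantees $|Dg|^{p(\cdot)}\in L^{q/\gamma_2}_{\mathrm{loc}}$, providing the extra integrability $\hat\sigma=q/\gamma_2-1>0$ required by that proposition.

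\textbf{Step 3 (Gehring).} With $f_1:=(1+|Du|^2)^{p(x)/2}$ and $f_2:=|Dg|^{p(x)}$, Step 2 gives exactly the hypothesis \eqref{gehes} for a suitable $\theta\in(0,1)$ and $b\equiv b(\texttt{data}_{p(\cdot)},q)$. Applying Proposition \ref{p1} on $U=B_R^+$ with $A=\Gamma_R$ in the boundary setting, respectively $A=\partial B_R^+$ in the interior one, and unravelling the weight $d(x)$ via the Ahlfors property on the smaller balls, delivers \eqref{bougeh2} and \eqref{inngeh}. The global statement \eqref{glob} then follows by a finite covering of $\overline{B_R^+}$ by balls of both types.

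\textbf{Main obstacle.} The Caccioppoli step is essentially forced by Lemma \ref{exlem} plus minimality, so the real work is Step 2: one must track the exponent gap $p_2-p_1$ through every comparison, convert all factors of the form $\rr^{c\rr^{\alpha}}$ into uniform constants via Lemma \ref{L0}, and invoke \eqref{p1p2*} in order to guarantee that Sobolev--Poincar\'e genuinely gains integrability instead of merely preserving it. These are the ingredients that force the thresholds in \eqref{r*1} and the upper bound $\sigma_g<q/\gamma_2-1$ appearing in the conclusion.
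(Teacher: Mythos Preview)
Your overall plan---Caccioppoli via Lemma \ref{exlem} and minimality, then Sobolev--Poincar\'e with a frozen exponent, then Proposition \ref{p1}---is exactly the paper's approach. Two technical points need adjustment.

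First, your dichotomy ``$B_\rr(x_0)\Subset B_R^+$'' versus ``$B_\rr(x_0)\cap\Gamma_R\ne\emptyset$'' is not quite the right split. If $B_\rr(x_0)$ merely grazes $\Gamma_R$, the set $B_\rr(x_0)\setminus B_R^+$ may have arbitrarily small measure, and the Sobolev--Poincar\'e inequality \eqref{poi0} (which requires $|\{u-g=0\}|\ge\hat c\,|B_\rr(x_0)|$) fails. The paper instead splits according to whether $x_0^n\le\tfrac{3\rr}{4}$ or $x_0^n>\tfrac{3\rr}{4}$: the first condition forces $|B_\rr(x_0)\setminus B_R^+|\ge c(n)|B_\rr(x_0)|$, while the second gives $B_{3\rr/4}(x_0)\Subset B_R^+$ so the interior argument applies on a slightly smaller ball.

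Second, your application of Proposition \ref{p1} is off. You propose $U=B_R^+$ with $A=\Gamma_R$ or $A=\partial B_R^+$, but the proposition yields only a \emph{global} weighted estimate on $U$, with weight $d(x)$ vanishing on $A$; taking $A=\Gamma_R$ would kill the estimate precisely at the boundary, where you want \eqref{bougeh2}. The paper applies the proposition \emph{locally}: having established the reverse H\"older inequality for \emph{all} small balls (both cases above) inside $B_\rr(x_0)\cap B_R^+$, it takes $U=B_\rr(x_0)\cap B_R^+$ and $A=\partial B_\rr(x_0)\cap B_R^+$. The weight $d(x)$ is then bounded below on $B_{\rr/2}(x_0)\cap B_R^+$, and the conclusion localises to \eqref{bougeh2}--\eqref{inngeh}.
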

\begin{proof}
We take $x_{0}\in \bar{B}^{+}_{R}$, $0<\rr<R-\snr{x_{0}}$ and distinguish two cases: $x_{0}^{n}\le \frac{3\rr}{4}$ and $x_{0}^{n}>\frac{3\rr}{4}$.
\subsubsection*{Case 1: $x_{0}^{n}\le \frac{3\rr}{4}$.} We fix parameters $\frac{\rr}{2}<\tau_{1}<\tau_{2}\le \rr$ and a cut-off function $\eta\in C^{1}_{c}(B_{\tau_{2}}(x_{0}))$ with the following specifics
\begin{flalign}\label{ex1}
\mathds{1}_{B_{\tau_{1}}(x_{0})}\le \eta\le \mathds{1}_{B_{\tau_{2}}(x_{0})}\qquad \mbox{and}\qquad \snr{D\eta}\le \frac{4}{\tau_{2}-\tau_{1}}.
\end{flalign}
Notice that in this case the intersection $B_{\tau_{2}}(x_{0})\cap \Gamma_{R}$ can be non-empty and the map $w:=u-\eta(u-g)$ agrees with $u$ in the sense of traces on $\partial(B_{\tau_{2}}(x_{0})\cap B_{R}^{+})$. This means that we can use Lemma \ref{exlem} to recover a map $\tilde{w}\in \hat{\mathcal{C}}_{u}^{p(\cdot)}(B_{\tau_{2}}(x_{0})\cap B_{R}^{+})$ satisfying the energy inequality \eqref{ex1} and so that
\begin{flalign*}
\int_{B_{\tau_{2}}(x_{0})\cap B^{+}_{R}}&\snr{Du}^{p(x)} \ \dx \le \lambda^{-1}\int_{B_{\tau_{2}}(x_{0})\cap B^{+}_{R}}k(x)\snr{Du}^{p(x)} \ \dx \nonumber \\
\le& \lambda^{-1}\int_{B_{\tau_{2}}(x_{0})\cap B^{+}_{R}}k(x)\snr{D\tilde{w}}^{p(x)} \ \dx\nonumber \\
\le &\frac{\Lambda}{\lambda}\int_{B_{\tau_{2}}(x_{0})\cap B^{+}_{R}}\snr{D\tilde{w}}^{p(x)} \ \dx\le c\int_{(B_{\tau_{2}}(x_{0})\setminus B_{\tau_{1}}(x_{0}))\cap B^{+}_{R}}\snr{Du}^{p(x)} \ \dx\nonumber \\
&+c\int_{B_{\tau_{2}}(x_{0})\cap B^{+}_{R}}\left[\snr{Dg}^{p(x)}+\left| \frac{u-g}{\tau_{2}-\tau_{1}} \right|^{p(x)}\right] \ \dx, 
\end{flalign*}
with $c\equiv c(N,\lambda,\Lambda,\gamma_{2},\m)$. Once the inequality on the previous display is available, we can use Widmann's hole filling technique, Lemma \ref{iter} and Lemma \ref{L0} (\emph{ii}) to end up with
\begin{flalign}\label{cacc}
\int_{B_{\rr/2}(x_{0})\cap B^{+}_{R}}&\snr{Du}^{p(x)} \ \dx \le c\int_{B_{\rr}(x_{0})\cap B_{R}^{+}}\snr{Dg}^{p(x)} \ \dx+c\rr^{-p_{2}(\rr)}\int_{B_{\rr}(x_{0})\cap B_{R}^{+}}\snr{u-g}^{p(x)} \ \dx\nonumber \\
\le &c\int_{B_{\rr}(x_{0})\cap B_{R}^{+}}\snr{Dg}^{p(x)} \ \dx+c\int_{B_{\rr}(x_{0})\cap B_{R}^{+}}\left| \frac{u-g}{\rr} \right|^{p(x)} \ \dx\nonumber \\
\le &c\int_{B_{\rr}(x_{0})\cap B_{R}^{+}}\snr{Dg}^{p(x)} \ \dx+c\int_{B_{\rr}(x_{0})\cap B_{R}^{+}}\left| \frac{u-g}{\rr} \right|^{p_{1}(\rr)} \ \dx +c\snr{B_{\rr}(x_{0})\cap B_{R}^{+}},
\end{flalign}
where $c\equiv c(N,\lambda,\Lambda,\gamma_{2},\m)$. Now we extend $u=g$ in $B_{\rr}(x_{0})\setminus B_{R}^{+}$, notice that condition $x_{0}^{n}\le 3\rr/4$ implies that $\snr{B_{\rr}(x_{0})\setminus B_{R}^{+}}\ge c(n)\snr{B_{\rr}(x_{0})}$ and use \eqref{poi0} to bound
\begin{flalign*}
\mint_{B_{\rr}(x_{0})\cap B_{R}^{+}}&\left| \frac{u-g}{\rr} \right|^{p_{1}(\rr)} \ \dx\le c\left(\mint_{B_{\rr}(x_{0})\cap B_{R}^{+}}\snr{Du-Dg}^{(p_{1}(\rr))_{*}} \ \dx\right)^{\frac{p_{1}(\rr)}{(p_{1}(\rr))_{*}}}\nonumber \\
\le &c\left(\mint_{B_{\rr}(x_{0})\cap B_{R}^{+}}(1+\snr{Du}^{2})^{\frac{p(x)(p_{1}(\rr))_{*}}{2p_{1}(\rr)}} \ \dx\right)^{\frac{p_{1}(\rr)}{(p_{1}(\rr))_{*}}}\nonumber \\
&+c\left(\mint_{B_{\rr}(x_{0})\cap B_{R}^{+}}\snr{Dg}^{\frac{p(x)(p_{1}(\rr))_{*}}{p_{1}(\rr)}} \ \dx\right)^{\frac{p_{1}(\rr)}{(p_{1}(\rr))_{*}}},
\end{flalign*}
for $c\equiv c(n,\gamma_{1},\gamma_{2})$. Merging the content of the two previous displays we obtain
\begin{flalign}\label{b0}
\mint_{B_{\rr/2}(x_{0})\cap B^{+}_{R}}&(1+\snr{Du}^{2})^{\frac{p(x)}{2}} \ \dx\le c\mint_{B_{\rr}(x_{0})\cap B_{R}^{+}}\snr{Dg}^{p(x)} \ \dx\nonumber \\
&+c\left(\mint_{B_{\rr}(x_{0})\cap B_{R}^{+}}(1+\snr{Du}^{2})^{\frac{p(x)}{2}\cdot \frac{(p_{1}(\rr))_{*}}{p_{1}(\rr)}} \ \dx\right)^{\frac{p_{1}(\rr)}{(p_{1}(\rr))_{*}}},
\end{flalign}
where $c\equiv c(n,N,\lambda,\Lambda,\gamma_{1},\gamma_{2},\m)$.
\subsubsection*{Case 2: $x_{0}^{n}>\frac{3\rr}{4}$} In this case, we see that $B_{\frac{3\rr}{4}}\Subset B_{R}^{+}$, so, as in \cite[Lemma 9]{decv} we recover
\begin{flalign}\label{b0.1}
\mint_{B_{\rr/2}(x_{0})}&(1+\snr{Du}^{2})^{\frac{p(x)}{2}} \ \dx\le c \left(\mint_{B_{3\rr/4}(x_{0})}(1+\snr{Du}^{2})^{\frac{p(x)}{2}\cdot \frac{(p_{1}(\rr))_{*}}{p_{1}(\rr)}} \ \dx\right)^{\frac{p_{1}(\rr)}{(p_{1}(\rr))_{*}}}\nonumber \\
\le &c \left(\mint_{B_{\rr}(x_{0})}(1+\snr{Du}^{2})^{\frac{p(x)}{2}\cdot \frac{(p_{1}(\rr))_{*}}{p_{1}(\rr)}} \ \dx\right)^{\frac{p_{1}(\rr)}{(p_{1}(\rr))_{*}}},
\end{flalign}
for $c\equiv c(n,N,\lambda,\Lambda,\gamma_{1},\gamma_{2},\m)$. Once \eqref{b0}-\eqref{b0.1} are available, we can apply Proposition \ref{p1} with $U\equiv B_{\rr}(x_{0})\cap B_{R}^{+}$ and $A\equiv \partial B_{\rr}(x_{0})\cap B_{R}^{+}$ to conclude with \eqref{bougeh2}-\eqref{inngeh}.
\\\\ 
Combining \eqref{bougeh2}, \eqref{inngeh} and a standard covering argument, we obtain \eqref{glob} and the proof is complete.
\end{proof}
\begin{remark}
\emph{Since $Dg\in L^{q}(B_{1}^{+},\mathbb{R}^{N\times n})$, with H\"older inequality we can rearrange \eqref{bougeh2} as follows:}
\begin{flalign}\label{bougeh}
&\left(\mint_{B_{\rr/2}(x_{0})\cap B_{R}^{+}}(1+\snr{Du}^{2})^{\frac{p(x)(1+\sigma)}{2}} \ \dx\right)^{\frac{1}{1+\sigma}}\nonumber \\
&\qquad\le c\left[\mint_{B_{\rr}(x_{0})\cap B_{R}^{+}}(1+\snr{Du}^{2})^{\frac{p(x)}{2}} \ \dx+\left(\mint_{B_{\rr}(x_{0})\cap B_{R}^{+}}\snr{Dg}^{q} \ \dx\right)^{\frac{p_{2}(R)}{q}}\right],
\end{flalign}
\emph{for $c\equiv c(\texttt{data}_{p(\cdot)},q)$.}
\end{remark}
Let us point out a particularly helpful inequality contained in the proof of Lemma \ref{geh}.
\begin{corollary}\label{gehc}
Under assumptions \eqref{asp}, \eqref{ask}, \eqref{m} and \eqref{g}, let $u\in W^{1,p(\cdot)}(B_{1}^{+},\m)$ be a solution of problem \eqref{pd1}. Then for any half-ball $B_{R}\subset \bar{B}_{1}^{+}$ and all balls $B_{\rr}(x_{0})$ with $x_{0}\in B_{R}^{+}$, $\rr\in (0,R-\snr{x_{0}})$, $R\in (0,R_{*}]$ and $R_{*}$ as in \eqref{r*1}, there holds that
\begin{flalign}\label{caccine}
\mint_{B_{\rr/2}(x_{0})\cap B_{R}^{+}}&\snr{Du}^{p(x)} \ \dx\le c\mint_{B_{\rr}(x_{0})\cap B_{R}^{+}}\left[\left| \frac{u-g}{r}\right|^{p(x)}+\snr{Dg}^{p(x)}\right] \ \dx 
\end{flalign}
with $c\equiv c(\texttt{data}_{p(\cdot)})$. In case $B_{\rr}(x_{0})\Subset B_{1}^{+}$, the inequality
\begin{flalign}\label{caccineint}
\mint_{B_{\rr/2}(x_{0})\cap B_{R}^{+}}&\snr{Du}^{p(x)} \ \dx\le c\mint_{B_{\rr}(x_{0})\cap B_{R}^{+}}\left[\left| \frac{u-(u)_{\rr}}{r}\right|^{p(x)}\right] \ \dx, 
\end{flalign}
for $c\equiv c(\texttt{data}_{p(\cdot)})$. Moreover, the following inequalities are satisfied:
\begin{flalign}\label{ca1}
\mint_{B_{\rr/4}(x_{0})\cap B_{R}^{+}}\snr{Du}^{p(x)} \ \dx \le c\rr^{-p_{2}(\rr)}\quad \mbox{and}\quad \mint_{B_{\rr/4}(x_{0})\cap B_{R}^{+}}\snr{Du}^{p(x)(1+\sigma)} \ \dx \le c\rr^{-p_{2}(\rr)(1+\sigma)}
\end{flalign}
with $c\equiv c(n,N,\m,\gamma_{1},\gamma_{2},q,\nr{Dg}_{L^{q}(B_{1}^{+})})$ and for all $\sigma\in \left[0,\min\left\{\sigma_{g},\frac{q}{n}-1\right\}\right]$, where $\sigma_{g}$ is the same higher integrability threshold appearing in Lemma \ref{geh}.
\end{corollary}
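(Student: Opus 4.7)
The plan is to read off the three estimates essentially from what has already been produced inside the proof of Lemma~\ref{geh}, with the bounds on the boundary datum converted into absolute constants via the compactness of $\m$ and the assumption $g\in W^{1,q}$.

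For \eqref{caccine}, I would redo the Case 1 argument of Lemma~\ref{geh} verbatim but stop at the stage where the Widmann hole-filling has been applied. Concretely, take $\tfrac{\rr}{2}<\tau_{1}<\tau_{2}\le \rr$, a cut-off $\eta$ as in \eqref{ex1}, form the admissible comparison map $w:=u-\eta(u-g)$, notice $w=u$ in a neighborhood of $\partial(B_{\tau_{2}}(x_{0})\cap B_{R}^{+})$, and apply Lemma~\ref{exlem} to obtain a manifold-valued competitor $\tilde w\in \hat{\mathcal{C}}_{u}^{p(\cdot)}(B_{\tau_{2}}(x_{0})\cap B_{R}^{+},\m)$ with controlled $p(\cdot)$-energy. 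Testing minimality against $\tilde w$, using $\lambda\le k(\cdot)\le \Lambda$ and $|D\eta|\le 4/(\tau_{2}-\tau_{1})$, produces the standard Caccioppoli-type inequality with a $p(x)$-term on the annulus. Widmann's hole-filling followed by Lemma~\ref{iter} absorbs that term and yields \eqref{caccine} after dividing by $|B_{\rr}(x_0)\cap B_R^+|$.

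For \eqref{caccineint} the argument is identical but simpler: since $B_\rr(x_0)\Subset B_1^+$, I would compare with $w:=u-\eta(u-(u)_{\rr})$, which again equals $u$ in a neighborhood of $\partial B_{\tau_2}(x_0)$ so that Lemma~\ref{exlem} applies, and then use the interior Poincaré inequality on the whole ball (no need to extend through the flat part) to close up the hole-filling.

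Finally, for the two bounds in \eqref{ca1}, I would plug into \eqref{caccine} the trivial manifold bound $|u-g|\le 2\,\mathrm{diam}(\m)$, obtaining $\mint |u-g|^{p(x)}/\rr^{p(x)}\le c(\m)\,\rr^{-p_{2}(\rr)}$, and control the $|Dg|^{p(x)}$ average by splitting $\{|Dg|\le 1\}\cup\{|Dg|>1\}$ and applying Hölder with exponent $q/p_{2}(\rr)$: this is where $q>\max\{n,\gamma_{2}\}$ and $\rr\in(0,R_*]$, together with Lemma~\ref{L0}(ii), give
\[
\mint_{B_{\rr}(x_0)\cap B_R^+}|Dg|^{p(x)}\dx\le c\left(1+\nr{Dg}_{L^q(B_1^+)}^{p_{2}(\rr)}\right)\rr^{-n p_{2}(\rr)/q}\le c\,\rr^{-p_{2}(\rr)},
\]
since $n/q\le 1$. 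The higher integrability estimate in \eqref{ca1} then comes by feeding the first bound into \eqref{bougeh} (applied on $B_{\rr/2}$ in place of $B_\rr$) and handling the $|Dg|^q$-integral by the same splitting together with $\nr{Dg}_{L^q(B_1^+)}\le C$. The only step that requires genuine care is the $|Dg|^{p(x)}$ bound, where the interplay between variable exponent and $\rr^{-p_{2}(\rr)}$ scaling forces the restriction on $R_*$ recorded in \eqref{r*1}–\eqref{p1p2*}; everything else is either a direct quotation of Lemma~\ref{geh} or an elementary use of $\m$ being compact.
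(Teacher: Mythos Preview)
Your proposal is correct and essentially identical to the paper's own proof. The paper also obtains \eqref{caccine} by pointing back to the Caccioppoli-type inequality \eqref{cacc} derived in Case~1 of Lemma~\ref{geh}, cites \cite[Lemma~8]{decv} for \eqref{caccineint} (which is exactly your comparison with $u-\eta(u-(u)_\rr)$), and for \eqref{ca1} combines \eqref{caccine} with the compactness bound $|u-g|\le c(\m)$ and the H\"older estimate on $|Dg|^{p(x)(1+\sigma)}$ (the paper's display \eqref{gg}), then feeds the result into \eqref{bougeh2}--\eqref{inngeh} just as you feed it into \eqref{bougeh}.
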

\begin{proof}
Inequality \eqref{caccine} is the same as \eqref{cacc} in the proof of Lemma \ref{geh}, while the proof of \eqref{caccineint} is contained in \cite[Lemma 8]{decv}. To prove \eqref{ca1} we only need to notice that by $\eqref{g}_{2}$ it immediately follows that
\begin{flalign}\label{gg}
\rr^{p_{2}(\rr)}\mint_{B_{\rr}(x_{0})\cap B_{R}^{+}}&\snr{Dg}^{p(x)(1+\sigma)} \ \dx \le c\left[\rr^{p_{2}(\rr)}+\rr^{p_{2}(\rr)}\left(\mint_{B_{\rr}(x_{0})\cap B_{R}^{+}}\snr{Dg}^{p_{2}(\rr)(1+\sigma)} \ \dx\right)\right]\nonumber \\
\le &c\left[\rr^{p_{2}(\rr)}+\rr^{p_{2}(\rr)\left(1-\frac{n(1+\sigma)}{q}\right)}\left(1+\nr{Dg}_{L^{q}(B_{1}^{+})}^{2\gamma_{2}}\right)\right]\le c(n,\gamma_{2},\nr{Dg}_{L^{q}(B_{1}^{+})}). 
\end{flalign}
Using this information together with \eqref{caccine} and $\eqref{m}_{1}$, we obtain $\eqref{ca1}_{1}$. Combining \eqref{bougeh2}-\eqref{inngeh} with $\eqref{ca1}_{1}$ and \eqref{gg} we get $\eqref{ca1}_{2}$ and the proof is complete.
\end{proof}
By Proposition \ref{p1} with $A\equiv\emptyset$, we can prove a globally higher integrability result for $p$-harmonic functions, see e.g. \cite[Lemma 10]{decv} or \cite[Lemma 3.3]{dugrkr}.
\begin{lemma}\label{gehp}
Let $R\in (0,1]$, $x_{0}\in \Gamma_{R}$ and $\rr\in (0,R-\snr{x_{0}})$. Assume $\eqref{asp}_{2}$, $\eqref{ask}_{2}$ and \eqref{m}, take $p\in [\gamma_{1},\gamma_{2}]$ and $f\in W^{1,p}(\bar{B}_{\rr}^{+}(x_{0})\cap \bar{B}_{R}^{+},\m)$ so that $\snr{Df}^{p}\in L^{1+\hat{\delta}}(\bar{B}_{\rr}^{+}(x_{0})\cap \bar{B}_{R}^{+})$. If $v\in W^{1,p}(B_{\rr}^{+}(x_{0})\cap B_{R}^{+},\m)$ is a solution of the Dirichlet problem
\begin{flalign}\label{pd2.2}
\hat{\mathcal{C}}^{p}_{f}(B_{\rr}^{+}(x_{0})\cap B_{R}^{+},\m)\ni w\mapsto \min \int_{B_{\rr}^{+}(x_{0})\cap B_{R}^{+}} k(x)\snr{Dw}^{p} \ \dx,
\end{flalign}
then there exists a positive threshold $\delta_{g}\equiv \delta_{g}(n,N,\m,\gamma_{1},\gamma_{2},\lambda,\Lambda)\in (0,\hat{\delta})$ so that
\begin{flalign}\label{bougehp}
\left(\mint_{B_{\rr}^{+}(x_{0})\cap B_{R}^{+}}\snr{Dv}^{p(1+\delta)} \ \dx\right)^{\frac{1}{1+\delta}}\le c\left\{\mint_{ B_{\rr}(x_{0})\cap B_{R}^{+}}\snr{Dv}^{p} \ \dx+\left(\mint_{ B_{\rr}(x_{0})\cap B_{R}^{+}}\snr{Df}^{p(1+\delta)} \ \dx\right)^{\frac{1}{1+\delta}}\right\}
\end{flalign}
for all $\delta\in [0,\delta_{g})$. In \eqref{bougehp}, $c\equiv c(n,N,\m,\gamma_{1},\gamma_{2},\lambda,\Lambda)$.
\end{lemma}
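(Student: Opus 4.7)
The plan is to follow the familiar Caccioppoli--Sobolev--Gehring scheme, essentially repeating the ``Case 1'' argument inside the proof of Lemma~\ref{geh} with the constant exponent $p$ in place of $p(x)$ and with the pair $(v,f)$ playing the role of $(u,g)$. Fix $x_{0}\in\Gamma_{R}$, set $\varrho\in(0,R-\snr{x_{0}})$ (so $B_{\varrho}^{+}(x_{0})\Subset B_{R}^{+}$ up to $\Gamma_{R}$), pick $\varrho/2<\tau_{1}<\tau_{2}\le \varrho$ and a cut-off $\eta\in C^{1}_{c}(B_{\tau_{2}}(x_{0}))$ with $\mathds{1}_{B_{\tau_{1}}(x_{0})}\le \eta\le \mathds{1}_{B_{\tau_{2}}(x_{0})}$ and $\snr{D\eta}\le 4/(\tau_{2}-\tau_{1})$. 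The map $w:=v-\eta(v-f)$ equals $v$ in the sense of traces on $\partial^{+}(B_{\tau_{2}}^{+}(x_{0}))$ and equals $f$ on $B_{\tau_{2}}(x_{0})\cap\Gamma_{\varrho}(x_{0})\subset\Gamma_{R}$, so it agrees with $v$ on the full relative boundary of $B_{\tau_{2}}^{+}(x_{0})\cap B_{R}^{+}$. Furthermore, the image of $w$ lies in $\mathcal{M}$ in a neighbourhood of this relative boundary, so Lemma~\ref{exlem} furnishes a competitor $\tilde{w}\in\hat{\mathcal{C}}^{p}_{v}(B_{\tau_{2}}^{+}(x_{0}),\mathcal{M})$ with $\int\snr{D\tilde{w}}^{p}\le c\int\snr{Dw}^{p}$.

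Testing the minimality of $v$ against $\tilde{w}$ and using $\lambda\le k\le\Lambda$ yields
\begin{equation*}
\int_{B_{\tau_{1}}^{+}(x_{0})}\snr{Dv}^{p}\dx\le c\int_{B_{\tau_{2}}^{+}(x_{0})\setminus B_{\tau_{1}}^{+}(x_{0})}\snr{Dv}^{p}\dx + c\int_{B_{\tau_{2}}^{+}(x_{0})}\left[\snr{Df}^{p}+\left|\tfrac{v-f}{\tau_{2}-\tau_{1}}\right|^{p}\right]\dx.
\end{equation*}
Adding $c$ times the left hand side to both sides (Widmann's hole-filling) and iterating via Lemma~\ref{iter} on $(\tau_{1},\tau_{2})\in(\varrho/2,\varrho)$ gives the Caccioppoli-type inequality
\begin{equation*}
\mint_{B_{\varrho/2}^{+}(x_{0})}\snr{Dv}^{p}\dx\le c\mint_{B_{\varrho}^{+}(x_{0})}\left[\left|\tfrac{v-f}{\varrho}\right|^{p}+\snr{Df}^{p}\right]\dx,\qquad c\equiv c(n,N,\mathcal{M},\gamma_{1},\gamma_{2},\lambda,\Lambda).
\end{equation*}

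Next, since $v-f\in W^{1,p}_{0}(B_{\varrho}^{+}(x_{0}),\mathbb{R}^{N})$, extending $v-f$ by zero across $\Gamma_{\varrho}(x_{0})$ to the full ball $B_{\varrho}(x_{0})$ makes it vanish on $B_{\varrho}(x_{0})\setminus B_{\varrho}^{+}(x_{0})$, which is exactly half of $B_{\varrho}(x_{0})$. The Sobolev--Poincar\'e inequality \eqref{poi0} with exponent $p_{*}:=\max\{1,np/(n+p)\}$ therefore gives
\begin{equation*}
\mint_{B_{\varrho}^{+}(x_{0})}\left|\tfrac{v-f}{\varrho}\right|^{p}\dx\le c\left(\mint_{B_{\varrho}^{+}(x_{0})}\snr{Dv}^{p_{*}}\dx\right)^{p/p_{*}}+c\left(\mint_{B_{\varrho}^{+}(x_{0})}\snr{Df}^{p_{*}}\dx\right)^{p/p_{*}},
\end{equation*}
where in the last term the right hand side can be absorbed into $\snr{Df}^{p}$ via H\"older's inequality. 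Combining the two previous displays yields a reverse H\"older inequality of the form
\begin{equation*}
\mint_{B_{\varrho/2}^{+}(x_{0})}\snr{Dv}^{p}\dx\le c\left(\mint_{B_{\varrho}^{+}(x_{0})}\snr{Dv}^{p_{*}}\dx\right)^{p/p_{*}}+c\mint_{B_{\varrho}^{+}(x_{0})}\snr{Df}^{p}\dx.
\end{equation*}

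Finally, we apply Proposition~\ref{p1} with $U\equiv B_{\varrho}^{+}(x_{0})\cap B_{R}^{+}$, $A\equiv\emptyset$, $f_{1}\equiv\snr{Dv}^{p}$, $f_{2}\equiv\snr{Df}^{p}$, $\theta\equiv p_{*}/p<1$ and $\hat{\sigma}\equiv\hat{\delta}$; since $A=\emptyset$ one has $d(x)\equiv 1$ and hence $\tilde{f}_{1}\equiv f_{1}$, so the conclusion of Proposition~\ref{p1} delivers exactly \eqref{bougehp} with the exponent threshold $\delta_{g}\in(0,\hat{\delta})$ depending only on the displayed parameters. The only genuinely delicate point of the whole argument is the Caccioppoli step, where one must use Lemma~\ref{exlem} to realise the perturbation $w$ as an admissible $\mathcal{M}$-valued competitor---this is precisely the obstacle the extension lemma is designed to overcome, see Remark~\ref{nomo}---while every other step is routine once one has verified that the half-ball geometry $B_{\varrho}^{+}(x_{0})$ keeps the Ahlfors constants under control, which is automatic here.
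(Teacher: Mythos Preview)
Your approach is exactly the one the paper indicates (Proposition~\ref{p1} with $A\equiv\emptyset$, fed by a Caccioppoli inequality obtained through Lemma~\ref{exlem} as in Lemma~\ref{geh}), and the Caccioppoli step you wrote is correct.

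There is, however, a gap in how you invoke Proposition~\ref{p1}. You derive the reverse H\"older estimate only on the single pair $B_{\varrho/2}^{+}(x_{0})\subset B_{\varrho}^{+}(x_{0})$, whereas hypothesis \eqref{gehes} must hold for \emph{all} sub-balls $B_{s/2}(y)\cap U$ versus $B_{s}(y)\cap U$ with $y\in U:=B_{\varrho}^{+}(x_{0})\cap B_{R}^{+}$. Two cases are needed. For $y$ close to $\partial U$ (either the flat piece $\Gamma_{\varrho}(x_{0})$ or the curved piece $\partial^{+}B_{\varrho}^{+}(x_{0})$), your construction $w=v-\eta(v-f)$ does go through verbatim, because $v-f\in W^{1,p}_{0}(U,\mathbb{R}^{N})$ vanishes on the whole of $\partial U$, so the zero extension furnishes enough vanishing measure for \eqref{poi0}. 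For interior sub-balls $B_{s}(y)\Subset U$, however, $v-f$ need not vanish on any part of $B_{s}(y)$, so your Poincar\'e step fails; here one must instead run the competitor argument with $(v)_{B_{s}(y)}$ in place of $f$ (this is precisely ``Case~2'' in the proof of Lemma~\ref{geh}, leading to \eqref{caccineint}) and then apply the Sobolev--Poincar\'e inequality with mean value. Once both cases are recorded, Proposition~\ref{p1} applies and gives \eqref{bougehp}.
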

\subsection{Proof of Theorem \ref{t1}} \label{t1p}
The proof of Theorem \ref{t1} relies on the following result. 
\begin{proposition}\label{pp1}
Under assumptions \eqref{bdd}, \eqref{asp}, \eqref{ask} and \eqref{m}, let $u\in W^{1,p(\cdot)}(B_{1}^{+},\m)$ be a solution of problem \eqref{pd1} with boundary datum $g\colon \bar{B}_{1}^{+}\to \m$ satisfying \eqref{g}. Then, there exist a threshold radius $R_{*}\equiv R_{*}(\texttt{data})\in (0,1]$ and a smallness parameter $\varepsilon\equiv \varepsilon(\texttt{data})\in (0,1]$ such that if
\begin{flalign}\label{smallq}
\left(\rr^{p_{2}(x_{0},\rr)-n}\int_{B_{\rr}(x_{0})\cap B_{R}^{+}}\snr{Du}^{p_{2}(x_{0},\rr)} \ \dx\right)^{\frac{1}{p_{2}(x_{0},\rr)}}+\left(\rr^{q-n}\int_{B_{\rr}(x_{0})\cap B_{R}^{+}}\snr{Dg}^{q} \ \dx\right)^{\frac{1}{q}}<\varepsilon,
\end{flalign}
for some $R\in (0,R_{*}]$, $x_{0}\in B_{R}^{+}$ and $\rr\in (0,R-\snr{x_{0}})$, then
\begin{flalign*}
u\in C^{0,1-\frac{n}{q}}_{loc}\left((B_{\rr}(x_{0})\cap \bar{B}_{R}^{+})\setminus \Sigma_{0}(u,B_{\rr}(x_{0})\cap \bar{B}_{R}^{+}),\m\right),
\end{flalign*}
where $\Sigma_{0}(u,B_{\rr}(x_{0})\cap \bar{B}_{R}^{+})\subset \bar{B}_{R}^{+}$ is a closed subset with $\dim_{\mathcal{H}}(\Sigma_{0}(u,B_{\rr}(x_{0})\cap \bar{B}_{R}^{+}))<n-\gamma_{1}$.
\end{proposition}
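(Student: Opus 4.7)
The plan is to prove a Morrey--type decay of the excess functional
$\Phi(x_0,\rr):=\int_{B_\rr(x_0)\cap B_R^+}(1+|Du|^2)^{p(x)/2}\,dx$
by a freezing--and--comparison argument, and then to characterize the singular set via a Vitali covering / measure-density argument based on the higher integrability from Lemma \ref{geh}. Fix $x_0$ and $\rr$ at which \eqref{smallq} holds, set $p_0:=p_2(x_0,\rr)$, and let $v\in \hat{\mathcal{C}}^{p_0}_{u}(B_\rr(x_0)\cap B_R^+,\m)$ minimize the frozen, constant-exponent energy $w\mapsto \int k(x_0)|Dw|^{p_0}\,dx$, sharing with $u$ the full trace on $\partial(B_\rr(x_0)\cap B_R^+)$. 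By Lemma \ref{gehp} the map $v$ enjoys a reverse H\"older inequality in terms of $Du$ and $Dg$, and by the classical boundary theory for $p_0$-harmonic maps into $([p_0]-1)$-connected manifolds (applicable since $\m$ is $[\gamma_2]-1$-connected and the trace of $v$ on $\Gamma_\rr(x_0)$ is H\"older by \eqref{reg}), namely \cite{harlin,fu3,shouhlb}, one obtains a decay of the form
\begin{equation*}
\int_{B_{\tau\rr}(x_0)\cap B_R^+}(1+|Dv|^2)^{p_0/2}\,dx \;\le\; c\,\tau^{\,n-p_0+p_0(1-n/q)}\int_{B_{\rr}(x_0)\cap B_R^+}(1+|Dv|^2)^{p_0/2}\,dx + c\,\tau^{n}\rr^{\,n+\beta_0}
\end{equation*}
for every $\tau\in(0,1/4)$, where the last term encodes the $L^q$ correction coming from $Dg$ and is positive by the restriction $q>\max\{n,\gamma_2\}$.

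Next I would establish the comparison estimate. After projecting $v$ back onto $\m$ using Lemma \ref{exlem}, one can test minimality of $u$ against the projected map and minimality of $v$ against $u$; the standard monotonicity inequality \eqref{Vm} coupled with Lemma \ref{l6} produces
\begin{equation*}
\int_{B_{\rr}(x_0)\cap B_R^+}|V_{p_0}(Du)-V_{p_0}(Dv)|^{2}\,dx \;\le\; c\,\omega(\rr)\int_{B_{\rr}(x_0)\cap B_R^+}(1+|Du|^2)^{p(x)/2}\,dx + c\,\rr^{\,n+\beta_1},
\end{equation*}
where $\omega(\rr)\to 0$ as $\rr\to 0$. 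The factor $\omega(\rr)$ is generated by the H\"older oscillations of $p(\cdot)$ and $k(\cdot)$: the exponent gap $|t^{p(x)}-t^{p_0}|$ is handled through Lemma \ref{L0}(i), and the excess powers of $|Du|$ are absorbed by the higher integrability $|Du|^{p(\cdot)(1+\sigma_g)}\in L^{1}$ from Lemma \ref{geh} together with Lemma \ref{L0}(ii); the thresholds $\eqref{p1p2*}$ resulting from \eqref{r*1} guarantee exactly the Sobolev--Poincar\'e compatibility $p_2(\rr)\le p_1^*(\rr)$ and $np_2(\rr)/q\le p_1(\rr)$ needed for this absorption. Combining the two displays and transferring $V_{p_0}$ back to the variable exponent via \eqref{Vm} yields, after a short computation, the master decay
\begin{equation*}
\Phi(x_0,\tau\rr)\;\le\; c\,\tau^{\,n-p_2(\rr)+p_2(\rr)(1-n/q)}\,\Phi(x_0,\rr)+c\,\omega(\rr)\,\Phi(x_0,\rr)+c\,\rr^{\,n+\beta_2}.
\end{equation*}

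Finally, choosing $\tau$ first and then $\varepsilon$ in \eqref{smallq} so small that the excess remains below the threshold at every smaller scale, a routine iteration propagates \eqref{smallq} along the dyadic sequence $\tau^{k}\rr$ and delivers $\Phi(x_0,\varrho)\le c\,\varrho^{\,n-p_2(\rr)+p_2(\rr)(1-n/q)}$ for all $\varrho\in(0,\rr]$. In view of the Caccioppoli inequality \eqref{caccine} (or \eqref{caccineint} in the interior) this Morrey-type information upgrades to $u\in C^{0,1-n/q}_{\mathrm{loc}}$ in a neighbourhood of $x_0$, and the regular set is therefore relatively open. The singular set $\Sigma_0(u,B_\rr(x_0)\cap \bar{B}_R^+)$ is contained in the union of the set where the first term in \eqref{smallq} never becomes small and the set where the second term does not; the latter has Hausdorff dimension $\le n-q$, hence is empty since $q>n$, while the former, by \eqref{glob} and a standard density/Vitali argument as in \cite{decv,dugrkr}, has dimension at most $n-\gamma_1(1+\sigma)<n-\gamma_1$ for any $\sigma\in(0,\min\{\sigma_g,\sigma_g'\})$. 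The main technical obstacle, and the place where the paper truly departs from the constant-exponent theory, is the second step: isolating a genuinely small factor $\omega(\rr)$ in the comparison estimate. The delicate point is that the $p(\cdot)$-energy is neither scale-invariant nor translation-compatible with the frozen $p_0$-energy, and only the combination of the higher integrability from Lemma \ref{geh}, the exponent threshold \eqref{r*1}, and Lemma \ref{L0}(ii) allows the gap terms to be turned into a vanishing factor rather than a bounded constant.
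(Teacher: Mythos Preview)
Your outline follows the freezing-and-comparison philosophy of the paper, but the structure of the comparison is genuinely different from the paper's, and your version contains a real gap.

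The paper uses a \emph{two-step} comparison. First $u$ is compared on $B_{\rr/2}^{+}(x_{0})$ to a \emph{constrained} minimizer $v\in\hat{\mathcal C}^{p_{2}(\rr)}_{u}(B_{\rr/2}^{+}(x_{0}),\m)$ of $\int k(x)\snr{Dw}^{p_{2}(\rr)}\,\dx$ (exponent frozen, coefficient $k(\cdot)$ still variable, constraint retained). Then $v$ is compared on $B_{\rr/4}^{+}(x_{0})$ to an \emph{unconstrained} minimizer $h\in\hat{\mathcal C}^{p_{2}(\rr)}_{v}(B_{\rr/4}^{+}(x_{0}),\RN)$ of $\int k(x_{0})\snr{Dw}^{p_{2}(\rr)}\,\dx$. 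The reference decay (your first display) is applied to $h$, not to $v$, and is taken from the elementary boundary theory for unconstrained $p$-Laplace systems in $\RN$ \cite[Lemma~3.4]{dugrkr}, \cite[Lemma~2]{jm}; it holds unconditionally, with no smallness and no topological hypothesis on $\m$. The distance $v\to h$ is then controlled via the two Euler--Lagrange equations: the constraint enters only through the second-fundamental-form term $A_{v}(Dv,Dv)$ in \eqref{el1}, and the smallness of the scaled $p_{2}(\rr)$-energy (inherited from \eqref{smallq}) together with H\"older and Poincar\'e turns this into a factor $\varepsilon^{\kappa}$.

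Your one-step scheme instead requires a Morrey decay for the \emph{constrained} comparison map $v$ itself, which you import from \cite{harlin,fu3,shouhlb}. This is the gap. Those references prove boundary regularity for constrained $p$-harmonic maps under \emph{stronger} hypotheses on the boundary datum than $g\in W^{1,q}$ (the introduction of the present paper explicitly points out that its results weaken the assumptions of \cite{harlin,fu3,shouhlb}); and in any case they yield a partial-regularity statement, not a quantitative decay of the form you write, valid on every half-ball below a fixed scale and with a constant independent of the centre. Producing such a decay for the constrained problem with $W^{1,q}$ boundary trace is essentially the content of Steps~2--4 here. The passage to the unconstrained map $h$ is precisely what breaks this circularity: once the target is $\RN$, the decay is linear theory.

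Two smaller remarks. First, in your comparison step you speak of ``projecting $v$ back onto $\m$ using Lemma~\ref{exlem}'', but in your own setup $v$ already takes values in $\m$; the extension Lemma~\ref{exlem} is not used in this part of the paper's proof at all. Second, the small factor in the comparison is not only $\omega(\rr)$ with $\omega(\rr)\to 0$: in the paper it is $c(\varepsilon^{\kappa}+\rr^{\alpha})$, i.e.\ the smallness hypothesis \eqref{smallq} contributes an $\varepsilon$-power in addition to the H\"older-oscillation term, and both are needed when one fixes $\tau$ first and then chooses $\varepsilon$ and $R_{*}$ in the iteration.
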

\begin{proof}
For the sake of simplicity, we split the proof into six steps.
\subsubsection*{Step 1: Setting a threshold radius.} As mentioned in Section \ref{basic}, there is no loss of generality in reducing the size of the half ball we are working on. Precisely, in addition to \eqref{r*1}, we choose a radius $R\in (0,R_{*}]$, where now it is
\begin{flalign}\label{r*}
0<R_{*}<\min\left\{1,\left[\frac{\gamma_{1}^{2}}{4n[p]_{0,\alpha}}\right]^{\frac{1}{\alpha}},\left(\frac{\gamma_{1}q\left(1-\frac{n}{q}\right)}{4n[p]_{0,\alpha}}\right)^{\frac{1}{\alpha}},\left(\frac{\sigma_{0}\gamma_{1}}{2[p]_{0,\alpha}(2+\sigma_{0})}\right)^{\frac{1}{\alpha}}\right\},
\end{flalign}
for $\sigma_{0}\in (0,1)$ defined as
\begin{flalign}\label{t10}
\sigma_{0}:=\min\left\{\frac{1}{4},\frac{\sigma_{g}'}{2},\frac{\sigma_{g}}{2},\frac{2}{\gamma_{2}-1},\frac{\alpha}{\gamma_{2}},\frac{q-\gamma_{2}}{\gamma_{2}}\right\}.
\end{flalign}
In \eqref{t10}, $\sigma_{g}$ and $\sigma_{g}'$ are the higher integrability thresholds appearing Lemma \ref{geh}, therefore, given an half-ball $B_{R}^{+}\subset B_{1}^{+}$, by \eqref{glob} there holds that
\begin{flalign}\label{t11}
\snr{Du}^{p(\cdot)(1+\sigma)}\in L^{1}(B_{R}^{+})\quad \mbox{for all} \ \ \sigma\in [0,\sigma_{0}].
\end{flalign}
Moreover, in addition to \eqref{p1p2*}, another straightforward consequence of the restriction imposed in \eqref{r*} yields that
\begin{flalign}\label{t12}
p_{2}(x_{0},\rr)<p_{2}(x_{0},\rr)\left(1+\frac{\sigma_{0}}{2}\right)\le (1+\sigma_{0})p_{1}(x_{0},\rr),
\end{flalign}
whenever $x_{0}\in B_{R}^{+}$ and $\rr\in (0,R-\snr{x_{0}})$. Hence, combining \eqref{t11} and \eqref{t12} we can conclude that
\begin{flalign}\label{t13}
\snr{Du}^{p_{2}(x_{0},\rr)}\in L^{1}(B_{\rr}(x_{0})\cap B_{R}^{+}).
\end{flalign}
Let us stress that by continuity, for any point $\bar{x}\in \bar{B}_{R}^{+}$ for which $p(\bar{x})\ge n$, we can find a small ball $B_{\rr_{\bar{x}}}(\bar{x})\subset \bar{B}_{R}^{+}$ so that $p(x)>n-\frac{\sigma_{0}}{2}$ for all $x\in B_{\rr_{\bar{x}}}(\bar{x})$. Combining this information with~\eqref{inngeh} and observing that, by \eqref{t10} $$\left(n-\frac{\sigma_{0}}{2}\right)(1+\sigma_{0})>n+\frac{\sigma_{0}}{4},$$ with Sobolev-Morrey embedding theorem we obtain that $u\in C^{0,\frac{\sigma_{0}}{4n+\sigma_{0}}}(B_{\rr_{\bar{x}}/2}(\bar{x})\cap B_{1}^{+},\m)$. Therefore, for the rest of the paper, we shall assume that $\gamma_{2}<n$. Moreover, since from now on we work on sets of the type $B_{\rr}(x_{0})\cap B_{R}^{+}$ with $x_{0}\in B_{R}^{+}$ and $\rr\in (0,R-\snr{x_{0}})$, we shall simplify the notation in \eqref{p1p2} as follows: $p_{1}(x_{0},\rr)\equiv p_{1}(\rr)$ and $p_{2}(x_{0};\rr)\equiv p_{2}(\rr)$.
\subsubsection*{Step 2: Comparison, first time.} Let $u\in W^{1,p(\cdot)}(B_{1}^{+},\m)$ be a solution to the minimization problem \eqref{pd1} with \eqref{g} in force. We introduce the extensions
\begin{flalign}\label{tiu}
\tilde{u}(x):=\begin{cases}
\ u(x',x^{n})-g(x',x^{n})\quad &\mbox{if} \ \ x^{n}\ge 0\\
\ -\left(u(x',-x^{n})-g(x',-x^{n})\right)\quad &\mbox{if} \ \ x^{n}<0.
\end{cases}
\end{flalign}
Since $\texttt{tr}_{\Gamma_{1}}(u)=\texttt{tr}_{\Gamma_{1}}(g)$, it easily follows that $\tilde{u}\in W^{1,p(\cdot)}(B_{1},\RN)$ and, by \eqref{t11}, for all $B_{\rr}(x_{0})\subseteq B_{R}\subset B_{R_{*}}$ with $R_{*}$ as in \eqref{r*} there holds that
\begin{flalign}\label{eneq}
&\int_{B_{\rr}(x_{0})}\snr{D\tilde{u}}^{p_{2}(R)} \ \dx \le c\int_{B_{\rr}(x_{0})\cap B_{R}^{+}}\left[\snr{Du}^{p_{2}(R)}+\snr{Dg}^{p_{2}(R)}\right] \ \dx
\end{flalign}
with $c\equiv c(\gamma_{1},\gamma_{2})$. Before going on we define the following quantities:
\begin{flalign*}
&\phi(x_{0},\rr,p):=\left(\rr^{p}\mint_{B_{\rr}(x_{0})}(1+\snr{D\tilde{u}}^{2})^{p/2} \ \dx\right)^{\frac{1}{p}};\\
&\phi^{+}(x_{0},\rr,p):=\left(\rr^{p}\mint_{B_{\rr}(x_{0})\cap B_{R}^{+}}(1+\snr{Du}^{2})^{p/2} \ \dx\right)^{\frac{1}{p}};\\
&\psi(x_{0},\rr):=\phi(x_{0},\rr,p_{2}(\rr)),\qquad \psi^{+}(x_{0},\rr):=\phi^{+}(x_{0},\rr,p_{2}(\rr));\\
&\chi^{+}(x_{0},\rr):=\psi^{+}(x_{0},\rr)+\left(\rr^{q-n}\int_{B_{\rr}(x_{0})\cap B_{R}^{+}}\snr{Dg}^{q} \ \dx\right)^{\frac{1}{q}},\end{flalign*}
where $x_{0}$, $\rr$ and $R$ satisfy the usual relation $R\in (0,R_{*}]$, $x_{0}\in B_{R}^{+}$ and $\rr\in (0,R-\snr{x_{0}})$. In the definition of $\psi(x_{0},\rr)$, $p_{2}(\rr)$ is defined as in \eqref{p1p2}. We shall start our analysis by considering a point $x_{0}\in \Gamma_{R}$ and imposing \eqref{smallq} on $B_{\rr}(x_{0})\cap B_{R}^{+}\equiv B_{\rr}^{+}(x_{0})$, which, with the terminology introduced above reads as
\begin{flalign}\label{small1}
\chi^{+}(x_{0},\rr)<\varepsilon,
\end{flalign}
where $\varepsilon\in (0,1)$ is a small parameter whose size will be suitably reduced along the proof. Notice that, as done in the case of \eqref{eneq}, for all balls $B_{\rr}(x_{0})\subset B_{R}$, by H\"older inequality we have
\begin{flalign}\label{eneq1}
\chi^{+}(x_{0},\rr)\le& c'\left[\psi(x_{0},\rr)+\left(\rr^{p_{2}(\rr)-n}\int_{B_{\rr}(x_{0})\cap B_{R}^{+}}\snr{Dg}^{p_{2}(\rr)} \ \dx\right)^{\frac{1}{p_{2}(\rr)}}\right]\nonumber \\
&+c'\left(\rr^{q-n}\int_{B_{\rr}(x_{0})\cap B_{R}^{+}}\snr{Dg}^{q} \ \dx\right)^{\frac{1}{q}}\nonumber \\
\le &c'\left[\psi(x_{0},\rr)+\left(\rr^{q-n}\int_{B_{\rr}(x_{0})\cap B_{R}^{+}}\snr{Dg}^{q} \ \dx\right)^{\frac{1}{q}}\right],
\end{flalign}
for $c'\equiv c'(n,\gamma_{1},\gamma_{2},q)$. Now we compare $u$ to a solution $v\in W^{1,p_{2}(\rr)}(B_{\rr/2}^{+}(x_{0}),\m)$ of the Dirichlet problem
\begin{flalign}\label{pd2}
\hat{\mathcal{C}}^{p_{2}(\rr)}_{u}(B_{\rr/2}^{+}(x_{0}),\m)\ni w\mapsto \min \int_{B_{\rr/2}^{+}(x_{0})}k(x)\snr{Dw}^{p_{2}(\rr)} \ \dx.
\end{flalign}
Such a solution exists, given that by \eqref{t13}, class $\hat{\mathcal{C}}^{p_{2}(\rr)}_{u}(B_{\rr/2}^{+}(x_{0}),\m)$ is non-empty. The minimality of $v$ in class $\hat{\mathcal{C}}^{p_{2}(\rr)}_{u}(B_{\rr/2}^{+}(x_{0}),\m)$ yields that it satisfies the Euler-Lagrange equation
\begin{flalign}\label{el1}
0=\int_{B_{\rr/2}^{+}(x_{0})}k(x)p_{2}(\rr)\snr{Dv}^{p_{2}(\rr)-2}\left[Dv\cdot D\varphi-A_{v}(Dv,Dv)\varphi\right] \ \dx,
\end{flalign}
for any $\varphi \in W^{1,p_{2}(\rr)}_{0}(B_{\rr/2}^{+}(x_{0}),\RN)\cap L^{\infty}(B_{\rr/2}^{+}(x_{0}),\RN)$, where, for $y \in \m$, $A_{y}\colon T_{y}\m\times T_{y}\m\to (T_{y}\m)^{\perp}$ denotes the second fundamental form of $\m$. In particular, by tangentiality,
\begin{flalign}\label{20}
\nabla^{2}\Pi(v)(Dv,Dv)=-A_{v}(Dv,Dv)\quad \mbox{and}\quad \snr{A_{v}(Dv,Dv)}\le c_{\m}\snr{Dv}^{2},
\end{flalign}
where $c_{\m}$ depends only on the geometry of $\m$, see \cite[Appendix to Chapter 2]{sim}. Let us quantify the $L^{p_{2}(\rr)}$-distance between $Du$ and $Dv$. We first notice that, by \eqref{t13}, the map $\varphi:=u-v$ is admissible as a test in \eqref{el1}, thus exploiting the monotonicity properties of the integrand in \eqref{pd2}, \eqref{Vm} and Lemma \ref{l6} we obtain 
\begin{flalign}\label{t15}
\int_{B_{\rr/2}^{+}(x_{0})}&\snr{V_{p_{2}(\rr)}(Du)-V_{p_{2}(\rr)}(Dv)}^{2} \ \dx \nonumber \\
\le & c\int_{B_{\rr/2}^{+}(x_{0})}k(x)\left[\snr{Du}^{p_{2}(\rr)}-\snr{Dv}^{p_{2}(\rr)}\right] \ \dx+c\int_{B_{\rr/2}^{+}(x_{0})}\snr{Dv}^{p_{2}(\rr)}\snr{u-v} \ \dx
\end{flalign}
where $c\equiv c(n,N,\gamma_{1},\gamma_{2},\lambda,\Lambda,\m)$. Let us estimate the two quantities appearing on the right-hand side of \eqref{t15}. Notice that, being $v$ a solution of \eqref{pd2}, it satisfies the assumptions of Lemma \ref{gehp} with $p=p_{2}(\rr)$, $f=u$ and $\hat{\delta}=\frac{\sigma_{0}}{2}$, therefore, choosing any $\sigma'\in \left(0,\min\{\delta_{g},\hat{\delta}\}\right)$, by H\"older inequality we control: 
\begin{flalign*}
&\int_{B_{\rr/2}^{+}(x_{0})}\snr{Dv}^{p_{2}(\rr)}\snr{u-v} \ \dx\le c\rr^{n}\left(\mint_{B_{\rr/2}^{+}(x_{0})}\snr{Dv}^{p_{2}(\rr)\left(1+\sigma'\right)} \ \dx\right)^{\frac{1}{1+\sigma'}} \nonumber \\
&\qquad {\cdot} \left(\mint_{B_{\rr/2}^{+}(x_{0})}\snr{u-v}^{\frac{1+\sigma'}{\sigma'}} \ \dx\right)^{\frac{\sigma'}{1+\sigma'}}=:c(n)\rr^{n}\left[\mbox{(I)}\cdot\mbox{(II)}\right].
\end{flalign*}
By \eqref{bougeh2}, \eqref{bougehp}, \eqref{t12}, the minimality of $v$ in class $\hat{\mathcal{C}}^{p_{2}(\rr)}_{u}(B_{\rr/2}^{+}(x_{0}),\m)$, H\"older inequality, \eqref{small1} and Lemma \ref{L0} (\emph{ii.}) we bound
\begin{flalign*}
\mbox{(I)}\le &c\left(\mint_{B_{\rr/2}^{+}(x_{0})}\snr{Du}^{p_{2}(\rr)(1+\sigma')} \ \dx\right)^{\frac{1}{1+\sigma'}}\nonumber \\
\le &c\left(\mint_{B_{\rr/2}^{+}(x_{0})}\snr{Du}^{p_{1}(\rr)(1+\sigma_{0})} \ \dx\right)^{\frac{p_{2}(\rr)}{p_{1}(\rr)(1+\sigma_{0})}}\nonumber \\
\le &c\left[\mint_{B_{\rr}^{+}(x_{0})}(1+\snr{Du}^{2})^{\frac{p(x)}{2}} \ \dx+\left(\mint_{B_{\rr}^{+}(x_{0})}\snr{Dg}^{p(x)(1+\sigma_{0})} \ \dx\right)^{\frac{1}{1+\sigma_{0}}}\right]^{\frac{p_{2}(\rr)}{p_{1}(\rr)}}\nonumber \\
\le &c\left[\mint_{B_{\rr}^{+}(x_{0})}(1+\snr{Du}^{2})^{\frac{p(x)}{2}} \ \dx+\left(\mint_{B_{\rr}^{+}(x_{0})}\snr{Dg}^{q} \ \dx\right)^{\frac{p_{2}(\rr)}{q}}\right]^{\frac{p_{2}(\rr)}{p_{1}(\rr)}}\nonumber \\
\le &c\varepsilon^{\frac{p_{2}(\rr)-p_{1}(\rr)}{p_{1}(\rr)}}\left[\mint_{B_{\rr}^{+}(x_{0})}(1+\snr{Du}^{2})^{\frac{p_{2}(\rr)}{2}} \ \dx+\left(\mint_{B_{\rr}^{+}(x_{0})}\snr{Dg}^{q} \ \dx\right)^{\frac{p_{2}(\rr)}{q}}\right]
\end{flalign*}
with $c\equiv c(\texttt{data}_{p(\cdot)})$. With Poincar\'e inequality, \eqref{t10}, the minimality of $v$ in class $\hat{\mathcal{C}}_{u}^{p_{2}(\rr)}(B_{\rr/2}^{+}(x_{0}),\m)$ and \eqref{small1} we get
\begin{flalign*}
\mbox{(II)}\le &c\left(\rr^{p_{2}(\rr)}\mint_{B_{\rr/2}^{+}(x_{0})}\snr{Du-Dv}^{p_{2}(\rr)} \ \dx\right)^{\frac{\sigma'}{1+\sigma'}}\nonumber \\
\le &c\left(\rr^{p_{2}(\rr)-n}\int_{B_{\rr}^{+}(x_{0})}(1+\snr{Du}^{2})^{\frac{p_{2}(\rr)}{2}} \ \dx\right)^{\frac{\sigma'}{1+\sigma'}}\le c\varepsilon^{\frac{\gamma_{1}\sigma'}{1+\sigma'}},
\end{flalign*}
where $c\equiv c(n,\m,\gamma_{1},\gamma_{2},\lambda,\Lambda)$. Finally, by \eqref{bougeh}, \eqref{bougehp}, Lemma \ref{L0} (\emph{i.}) with $\varepsilon_{0}=\sigma'$, the minimality of $v$ in class $\hat{\mathcal{C}}_{u}^{p_{2}(\rr)}(B_{\rr/2}^{+}(x_{0}),\m)$, \eqref{t10} and \eqref{small1} we have
\begin{flalign*}
\int_{B_{\rr/2}^{+}(x_{0})}&k(x)\left[\snr{Du}^{p_{2}(\rr)}-\snr{Dv}^{p_{2}(\rr)}\right] \ \dx\le \int_{B_{\rr/2}^{+}(x_{0})}k(x)\left| \ \snr{Du}^{p_{2}(\rr)}-\snr{Du}^{p(x)} \ \right| \ \dx\nonumber \\
&+\int_{B_{\rr/2}^{+}(x_{0})}k(x)\left| \ \snr{Dv}^{p(x)}-\snr{Dv}^{p_{2}(\rr)} \ \right| \ \dx\nonumber \\
\le &c\rr^{n+\alpha}\left[\mint_{B_{\rr/2}^{+}(x_{0})}(1+\snr{Du}^{2})^{\frac{p_{2}(\rr)}{2}(1+\sigma')} \ \dx+\mint_{B_{\rr/2}^{+}(x_{0})}(1+\snr{Dv}^{2})^{\frac{p_{2}(\rr)}{2}(1+\sigma')} \ \dx\right]\nonumber \\
\le &c\rr^{n+\alpha}\left(\mint_{B_{\rr/2}^{+}(x_{0})}(1+\snr{Du}^{2})^{\frac{p_{1}(\rr)(1+\sigma_{0})}{2}} \ \dx\right)\nonumber \\
\le &c\rr^{n+\alpha}\left[\left(\mint_{B_{\rr}^{+}(x_{0})}(1+\snr{Du}^{2})^{\frac{p(x)}{2}} \ \dx\right)+\left(\mint_{B_{\rr}^{+}(x_{0})}\snr{Dg}^{q} \ \dx\right)^{\frac{p_{2}(\rr)}{q}}\right]^{1+\sigma_{0}}\nonumber \\
\le &c\rr^{n+\alpha-\sigma_{0}p_{2}(\rr)}\left[\rr^{p_{2}(\rr)-n}\int_{B_{\rr}^{+}(x_{0})}(1+\snr{Du}^{2})^{\frac{p_{2}(\rr)}{2}} \ \dx+\left(\rr^{q-n}\int_{B_{\rr}^{+}(x_{0})}\snr{Dg}^{q} \ \dx\right)^{\frac{p_{2}(\rr)}{q}}\right]^{\sigma_{0}}\nonumber \\
&\cdot\left[\mint_{B_{\rr}^{+}(x_{0})}(1+\snr{Du}^{2})^{\frac{p_{2}(\rr)}{2}} \ \dx+\left(\mint_{B_{\rr}^{+}(x_{0})}\snr{Dg}^{q} \ \dx\right)^{\frac{p_{2}(\rr)}{q}}\right]\nonumber \\
\le &c\varepsilon^{\sigma_{0}\gamma_{1}}\left[\int_{B_{\rr}^{+}(x_{0})}(1+\snr{Du}^{2})^{\frac{p_{2}(\rr)}{2}} \ \dx+\rr^{n\left(1-\frac{p_{2}(\rr)}{q}\right)}\left(\int_{B_{\rr}^{+}(x_{0})}\snr{Dg}^{q} \ \dx\right)^{\frac{p_{2}(\rr)}{q}}\right],
\end{flalign*}
where $c\equiv c(\texttt{data}_{p(\cdot)})$. Merging the content of all the previous displays we end up with
\begin{flalign}\label{t16}
&\int_{B_{\rr/2}^{+}(x_{0})}\snr{V_{p_{2}(\rr)}(Du)-V_{p_{2}(\rr)}(Dv)}^{2} \ \dx\nonumber \\
&\qquad \le c\varepsilon^{\frac{\sigma'\gamma_{1}}{1+\sigma'}}\left[\int_{B_{\rr}^{+}(x_{0})}(1+\snr{Du}^{2})^{\frac{p_{2}(\rr)}{2}} \ \dx+\rr^{n\left(1-\frac{p_{2}(\rr)}{q}\right)}\left(\int_{B_{\rr/2}^{+}(x_{0})}\snr{Dg}^{q} \ \dx\right)^{\frac{p_{2}(\rr)}{q}}\right],
\end{flalign}
where $c\equiv c(\texttt{data}_{p(\cdot)})$. If $p_{2}(\rr)\ge 2$, by \eqref{Vm} and \eqref{t16} we directly obtain that
\begin{flalign*}
\int_{B_{\rr/2}^{+}(x_{0})}&\snr{Du-Dv}^{p_{2}(\rr)} \ \dx\le \int_{B_{\rr/2}^{+}(x_{0})}\snr{V_{p_{2}(\rr)}(Du)-V_{p_{2}(\rr)}(Dv)}^{2} \ \dx\nonumber \\
\le & c\varepsilon^{\frac{\sigma'\gamma_{1}}{1+\sigma'}}\left[\int_{B_{\rr}^{+}(x_{0})}(1+\snr{Du}^{2})^{\frac{p_{2}(\rr)}{2}} \ \dx+\rr^{n\left(1-\frac{p_{2}(\rr)}{q}\right)}\left(\int_{B_{\rr}^{+}(x_{0})}\snr{Dg}^{q} \ \dx\right)^{\frac{p_{2}(\rr)}{q}}\right],
\end{flalign*}
while, when $1<p_{2}(\rr)<2$, via H\"older inequality, \eqref{t16}, the minimality of $v$ in class $\hat{\mathcal{C}}^{p_{2}(\rr)}_{u}(B_{\rr/2}^{+}(x_{0}),\m)$ and \eqref{Vm} we can conclude that
\begin{flalign*}
&\int_{B_{\rr/2}^{+}(x_{0})}\snr{Du-Dv}^{p_{2}(\rr)} \ \dx\nonumber \\
&\quad \le \left(\int_{B_{\rr/2}^{+}(x_{0})}\snr{Du-Dv}^{2}(\snr{Du}^{2}+\snr{Dv}^{2})^{\frac{p_{2}(\rr)-2}{2}} \ \dx\right)^{\frac{p_{2}(\rr)}{2}}\nonumber \\
&\quad \cdot\left(\int_{B_{\rr/2}^{+}(x_{0})}(\snr{Du}^{2}+\snr{Dv}^{2})^{\frac{p_{2}(\rr)}{2}} \ \dx\right)^{\frac{2-p_{2}(\rr)}{2}}\nonumber \\
&\quad \le c\varepsilon^{\frac{\sigma'\gamma_{1}^{2}}{2(1+\sigma')}}\left[\int_{B_{\rr}^{+}(x_{0})}(1+\snr{Du}^{2})^{\frac{p_{2}(\rr)}{2}} \ \dx+\rr^{n\left(1-\frac{p_{2}(\rr)}{q}\right)}\left(\int_{B_{\rr}^{+}(x_{0})}\snr{Dg}^{q} \ \dx\right)^{\frac{p_{2}(\rr)}{q}}\right].
\end{flalign*}
All in all, setting $\kappa:=\frac{\gamma_{1}\sigma'}{1+\sigma'}\min\left\{1,\frac{\gamma_{1}}{2}\right\}$, we get
\begin{flalign}\label{t17}
\int_{B_{\rr/2}^{+}(x_{0})}&\snr{Du-Dv}^{p_{2}(\rr)} \ \dx\le c\varepsilon^{\kappa}\int_{B_{\rr}^{+}(x_{0})}(1+\snr{Du}^{2})^{\frac{p_{2}(\rr)}{2}} \ \dx\nonumber \\
&+c\varepsilon^{\kappa}\rr^{n\left(1-\frac{p_{2}(\rr)}{q}\right)}\left(\int_{B_{\rr}^{+}(x_{0})}\snr{Dg}^{q} \ \dx\right)^{\frac{p_{2}(\rr)}{q}},
\end{flalign}
for $c\equiv c(\texttt{data}_{p(\cdot)})$. 
\subsubsection*{Step 3: Comparison, second time.} Set $k_{0}:=k(x_{0})$. We confront $v$ with the solution $h\in W^{1,p_{2}(\rr)}(B_{\rr/4}^{+}(x_{0}),\RN)$ of the Dirichlet problem
\begin{flalign}\label{pd3}
\hat{\mathcal{C}}_{v}^{p_{2}(\rr)}(B_{\rr/4}^{+}(x_{0}),\RN)\ni w\mapsto \int_{B_{\rr/4}^{+}(x_{0})}k_{0}\snr{Dw}^{p_{2}(\rr)} \ \dx.
\end{flalign}
Furthermore, $h$ solves the Euler-Lagrange equation
\begin{flalign}\label{ELh}
0=\int_{B_{\rr/4}^{+}(x_{0})}k_{0}p_{2}(\rr)\snr{Dh}^{p_{2}(\rr)}Dh\cdot D\varphi \ \dx
\end{flalign}
for all $\varphi\in W^{1,p_{2}(\rr)}_{0}(B_{\rr/4}^{+}(x_{0}),\mathbb{R}^{N})$. Notice that, by the results in \cite{leosie} there holds that
\begin{flalign}\label{t18}
\nr{h}_{L^{\infty}(B_{\rr/4}^{+}(x_{0}))}\le c(N)\nr{v}_{L^{\infty}(B_{\rr/4}^{+}(x_{0}))}\le c(N,\m).
\end{flalign}
Recalling \cite[Lemma 3.4]{dugrkr}, see also \cite[Proof of Lemma 2]{jm} there holds that
\begin{flalign}\label{t19}
\int_{B_{\varsigma}^{+}(x_{0})}&\snr{Dh}^{p_{2}(\rr)} \ \dx \le c\left(\frac{\varsigma}{\rr}\right)^{\vartheta}\int_{B_{\rr/2}^{+}(x_{0})}\snr{Du}^{p_{2}(\rr)} \ \dx\nonumber \\
&+c\varsigma^{n\left(1-\frac{p_{2}(\rr)}{q}\right)}\left(\int_{B_{\rr/2}^{+}(x_{0})}\snr{Dg}^{q} \ \dx\right)^{\frac{p_{2}(\rr)}{q}},
\end{flalign}
for all $\vartheta\in \left(n\left(1-\frac{p_{2}(\rr)}{q}\right),n\right)$ with $c\equiv c(n,N,\gamma_{1},\gamma_{2},\lambda,\Lambda,q)$. For \eqref{t19} we also used that, by \eqref{pd3} and \eqref{pd2},
$$\texttt{tr}_{\Gamma_{1}}(h)=\texttt{tr}_{\Gamma_{R/4}}(v)=\texttt{tr}_{\Gamma_{R/4}}(u)=\texttt{tr}_{\Gamma_{R/4}}(g),$$
the minimality of $h$ in class $\hat{\mathcal{C}}^{p_{2}(\rr)}_{v}(B_{\rr/4}^{+}(x_{0}),\mathbb{R}^{N})$ and the one of $v$ in class $\hat{\mathcal{C}}^{p_{2}(\rr)}_{u}(B_{\rr/2}^{+}(x_{0}),\m)$. Exploiting now the monotonicity properties of the integrand in \eqref{pd3}, Lemma \ref{l6}, \eqref{el1}, \eqref{ELh}, H\"older inequality and the minimality of $h$ in class $\hat{\mathcal{C}}_{v}^{p_{2}(\rr)}(B_{\rr/4}^{+}(x_{0}),\mathbb{R}^{N})$, we estimate
\begin{flalign*}
c\int_{B_{\rr/4}^{+}(x_{0})}&\snr{V_{p_{2}(\rr)}(Dv)-V_{p_{2}(\rr)}(Dh)}^{2} \ \dx\nonumber \\
\le& \int_{B_{\rr/4}^{+}(x_{0})}k_{0}p_{2}(\rr)\left(\snr{Dv}^{p_{2}(\rr)-2}Dv-\snr{Dh}^{p_{2}(\rr)-2}Dh\right)\cdot (Dv-Dh) \ \dx\nonumber \\
=&\int_{B_{\rr/4}^{+}(x_{0})}(k_{0}-k(x))p_{2}(\rr)\snr{Dv}^{p_{2}(\rr)-2}Dv\cdot (Dv-Dh) \ \dx\nonumber \\
&+\int_{B_{\rr/4}^{+}(x_{0})}k(x)p_{2}(\rr)\snr{Dv}^{p_{2}(\rr)-2}Dv\cdot (Dv-Dh) \ \dx \nonumber \\
\le &c\rr^{\alpha}\int_{B_{\rr/4}^{+}(x_{0})}\snr{Dv}^{p_{2}(\rr)-1}\snr{Dv-Dh} \ \dx +c\int_{B_{\rr/4}^{+}(x_{0})}\snr{Dv}^{p_{2}(\rr)}\snr{v-h} \ \dx\nonumber \\
\le &c\rr^{\alpha}\int_{B_{\rr/4}^{+}(x_{0})}\snr{Dv}^{p_{2}(\rr)} \ \dx +c\int_{B_{\rr/4}^{+}(x_{0})}\snr{Dv}^{p_{2}(\rr)}\snr{v-h} \ \dx=:c\left[\rr^{\alpha}\mbox{(I)}+\mbox{(II)}\right],
\end{flalign*}
with $c\equiv c(n,N,\lambda,\Lambda,\gamma_{1},\gamma_{2},[k]_{0,\alpha},\alpha)$. The minimality of $v$ in class $\hat{\mathcal{C}}_{u}^{p_{2}(\rr)}(B_{\rr/2}^{+}(x_{0}),\m)$ yields that
\begin{flalign*}
\mbox{(I)}\le \int_{B_{\rr}^{+}(x_{0})}\snr{Du}^{p_{2}(\rr)} \ \dx
\end{flalign*}
and, recalling also \eqref{small1}, we see that
\begin{flalign}\label{t20}
\left(\frac{\rr}{2}\right)^{p_{2}(\rr)-n}&\int_{B_{\rr/2}^{+}(x_{0})}\snr{Dv}^{p_{2}(\rr)} \ \dx \le 2^{n-\gamma_{1}}\rr^{p_{2}(\rr)-n}\int_{B_{\rr/2}^{+}(x_{0})}\snr{Du}^{p_{2}(\rr)} \ \dx\nonumber \\
\le &2^{n-\gamma_{1}}\left[\rr^{p_{2}(\rr)-n}\int_{B_{\rr}^{+}(x_{0})}(1+\snr{Du}^{2})^{\frac{p_{2}(\rr)}{2}} \ \dx+\left(\rr^{q-n}\int_{B_{\rr}^{+}(x_{0})}\snr{Dg}^{q} \ \dx\right)^{\frac{p_{2}(\rr)}{q}}\right]\nonumber \\
<&2^{n-\gamma_{1}}\varepsilon^{p_{2}(\rr)}.
\end{flalign}
By H\"older inequality, the minimality of $h$ in class $\hat{\mathcal{C}}_{v}^{p_{2}(\rr)}(B_{\rr/4}^{+}(x_{0}),\RN)$ and the one of $v$ in class $\hat{\mathcal{C}}_{u}^{p_{2}(\rr)}(B_{\rr/2}^{+}(x_{0}),\m)$, Lemma \ref{gehp}, \eqref{inngeh}, \eqref{t18} and \eqref{t20} we bound 
\begin{flalign*}
\mbox{(II)}\le &c\rr^{n}\left(\mint_{B_{\rr/4}^{+}(x_{0})}\snr{Dv}^{p_{2}(\rr)(1+\sigma')} \ \dx\right)^{\frac{1}{1+\sigma'}}\left(\mint_{B_{\rr/4}^{+}(x_{0})}\snr{v-h}^{p_{2}(\rr)} \ \dx\right)^{\frac{\sigma'}{1+\sigma'}}\nonumber \\
\le &c\rr^{n}\left[\mint_{B_{\rr/2}^{+}(x_{0})}\snr{Du}^{p_{2}(\rr)} \ \dx+\left(\mint_{B_{\rr/2}^{+}(x_{0})}\snr{Dg}^{q} \ \dx\right)^{\frac{p_{2}(\rr)}{q}}\right]\nonumber \\
&\cdot\left(\rr^{p_{2}(R)-n}\int_{B_{\rr/4}^{+}(x_{0})}\snr{Dv-Dh}^{p_{2}(\rr)} \ \dx\right)^{\frac{\sigma'}{1+\sigma'}}\nonumber \\
\le &c\varepsilon^{\frac{\gamma_{1}\sigma'}{1+\sigma'}}\left[\int_{B_{\rr}^{+}(x_{0})}(1+\snr{Du}^{2})^{\frac{p_{2}(\rr)}{2}} \ \dx+\rr^{1-\frac{p_{2}(\rr)}{q}}\left(\int_{B_{\rr}^{+}(x_{0})}\snr{Dg}^{q} \ \dx\right)^{\frac{p_{2}(\rr)}{q}}\right],
\end{flalign*}
for $c\equiv c(\texttt{data}_{p(\cdot)})$. Merging the content of the two previous displays and proceeding as in the last part of \emph{Step 2} we end up with
\begin{flalign}\label{t.21}
&\int_{B_{\rr/4}^{+}(x_{0})}\snr{Dv-Dh}^{p_{2}(\rr)} \ \dx\nonumber \\
&\quad\le c\left[\varepsilon^{\kappa}+\rr^{\alpha}\right]\left[\int_{B_{\rr}^{+}(x_{0})}(1+\snr{Du}^{2})^{\frac{p_{2}(\rr)}{2}} \ \dx+\rr^{n\left(1-\frac{p_{2}(\rr)}{q}\right)}\left(\int_{B_{\rr}^{+}(x_{0})}\snr{Dg}^{q} \ \dx\right)^{\frac{p_{2}(\rr)}{q}}\right],
\end{flalign}
with $c\equiv c(\texttt{data})$. Collecting inequalities \eqref{t17} and \eqref{t.21} we obtain
\begin{flalign}\label{t.22}
&\int_{B_{\rr/4}^{+}(x_{0})}\snr{Du-Dh}^{p_{2}(\rr)} \ \dx\nonumber \\
&\quad\le c\left[\varepsilon^{\kappa}+\rr^{\alpha}\right]\left[\int_{B_{\rr}^{+}(x_{0})}(1+\snr{Du}^{2})^{\frac{p_{2}(\rr)}{2}} \ \dx+\rr^{n\left(1-\frac{p_{2}(\rr)}{q}\right)}\left(\int_{B_{\rr}^{+}(x_{0})}\snr{Dg}^{q} \ \dx\right)^{\frac{p_{2}(\rr)}{q}}\right],
\end{flalign}
where $c\equiv c(\texttt{data})$. 
\subsubsection*{Step 4: Morrey decay estimates at the boundary.} Let $\varsigma\in \left(0,\frac{\rr}{4}\right)$ and estimate, via \eqref{eneq}, \eqref{t19} and \eqref{t.22},
\begin{flalign*}
\int_{B_{\varsigma}(x_{0})}&(1+\snr{D\tilde{u}}^{2})^{\frac{p_{2}(\rr)}{2}} \ \dx\le  c\left[\int_{B_{\varsigma}^{+}(x_{0})}\snr{Du}^{p_{2}(\rr)} \ \dx+\int_{B_{\varsigma}^{+}(x_{0})}\snr{Dg}^{p_{2}(\rr)} \ \dx\right]+c\varsigma^{n}\nonumber \\
\le &c\left[\int_{B_{\varsigma}^{+}(x_{0})}\snr{Du-Dh}^{p_{2}(\rr)} \ \dx+\int_{B_{\varsigma}^{+}(x_{0})}\snr{Dh}^{p_{2}(\rr)} \ \dx +\int_{B_{\varsigma}^{+}(x_{0})}\snr{Dg}^{p_{2}(\rr)} \ \dx\right]+c\varsigma^{n}\nonumber \\
\le &c\left[\left(\frac{\varsigma}{\rr}\right)^{n}+\varepsilon^{\kappa}+\rr^{\alpha}+\left(\frac{\varsigma}{\rr}\right)^{\vartheta}\right]\nonumber \\
&\cdot\left[\int_{B_{\rr}^{+}(x_{0})}(1+\snr{Du}^{2})^{\frac{p_{2}(\rr)}{2}} \ \dx+\rr^{n\left(1-\frac{p_{2}(\rr)}{q}\right)}\left(\int_{B_{\rr}^{+}(x_{0})}\snr{Dg}^{q} \ \dx\right)^{\frac{p_{2}(\rr)}{q}}\right]\nonumber \\
&+c\left(\frac{\varsigma}{\rr}\right)^{n\left(1-\frac{p_{2}(\rr)}{q}\right)}\left(\rr^{q-n}\int_{B_{\rr}^{+}(x_{0})}\snr{Dg}^{q} \ \dx\right)^{\frac{p_{2}(\rr)}{q}},
\end{flalign*}
where $c\equiv c(\texttt{data})$. Now recall that $\vartheta>n\left(1-\frac{p_{2}(\rr)}{q}\right)$, so we can always find $\hat{\nu}\in \left(n\left(1-\frac{p_{2}(\rr)}{q}\right),\vartheta\right)$. Moreover, set
\begin{flalign*}
\hat{p}_{2}(\rr):=n\left(1-\frac{p_{2}(\rr)}{q}\right)\qquad \mbox{and}\qquad \tilde{p}_{2}(\rr):=p_{2}(\rr)-n+\hat{\nu}
\end{flalign*}
and choose $\varsigma=\tau \rr$ for some $\tau\in \left(0,\frac{1}{4}\right)$. Multiplying both sides of the previous inequality by $(\tau \rr)^{p_{2}(\rr)-n}$ we obtain
\begin{flalign}\label{t.23}
&(\tau \rr)^{p_{2}(\rr)-n}\int_{B_{\tau \rr}(x_{0})}(1+\snr{D\tilde{u}}^{2})^{\frac{p_{2}(\rr)}{2}} \ \dx\nonumber \\
&\quad\le \tau^{\tilde{p}_{2}(\rr)}\left[c\tau^{n-\nu}+c\tau^{-\nu}\left(\varepsilon^{\kappa}+R_{*}^{\alpha}\right)+c\tau^{\vartheta-\nu}\right]\nonumber \\
&\quad\cdot\left[\rr^{p_{2}(\rr)-n}\int_{B_{\rr}^{+}(x_{0})}(1+\snr{Du}^{2})^{\frac{p_{2}(\rr)}{2}} \ \dx+\left(\rr^{q-n}\int_{B_{\rr}^{+}(x_{0})}\snr{Dg}^{q} \ \dx\right)^{\frac{p_{2}(\rr)}{q}}\right]\nonumber \\
&\quad+c\tau^{\hat{p}_{2}(\rr)}\left(\rr^{q-n}\int_{B_{\rr}^{+}(x_{0})}\snr{Dg}^{q} \ \dx\right)^{\frac{1}{q}},
\end{flalign}
for $c\equiv c(\texttt{data})$. With the notation introduced in \emph{Step 2}, the inequality in \eqref{t.23} reads as
\begin{flalign}\label{t.24}
&\phi(x_{0},\tau \rr,p_{2}(\rr))\le \tau^{\frac{\tilde{p}_{2}(\rr)}{p_{2}(\rr)}}\left[c\tau^{\frac{(n-\nu)}{p_{2}(\rr)}}+c\tau^{-\frac{\nu}{p_{2}(\rr)}}\left(\varepsilon^{\frac{\kappa}{p_{2}(\rr)}}+R_{*}^{\frac{\alpha}{p_{2}(\rr)}}\right)+c\tau^{\frac{\vartheta-\nu}{p_{2}(\rr)}}\right]\nonumber \\
&\qquad\cdot\left[\psi^{+}(x_{0},\rr)+\left(\rr^{q-n}\int_{B_{\rr}^{+}(x_{0})}\snr{Dg}^{q} \ \dx\right)^{\frac{1}{q}}\right]+c\tau^{1-\frac{n}{q}}\left(\rr^{q-n}\int_{B_{\rr}^{+}(x_{0})}\snr{Dg}^{q} \ \dx\right)^{\frac{1}{q}},
\end{flalign}
therefore since
\begin{flalign}\label{t.25}
\phi(x_{0},r,p)\le \phi(x_{0},r,q) \ \ \mbox{for} \ \ p\le q,
\end{flalign}
we obtain from \eqref{t.24}:
\begin{flalign*}
\psi(x_{0},\tau \rr)\le & \tau^{\frac{\tilde{p}_{2}(\rr)}{p_{2}(\rr)}}\left[c\tau^{\frac{(n-\nu)}{p_{2}(\rr)}}+c\tau^{-\frac{\nu}{p_{2}(\rr)}}\left(\varepsilon^{\frac{\kappa}{p_{2}(\rr)}}+R_{*}^{\frac{\alpha}{p_{2}(\rr)}}\right)+c\tau^{\frac{\vartheta-\nu}{p_{2}(\rr)}}\right]\nonumber \\
&\cdot\left[\psi^{+}(x_{0},\rr)+\left(\rr^{q-n}\int_{B_{\rr}^{+}(x_{0})}\snr{Dg}^{q} \ \dx\right)^{\frac{p_{2}(\rr)}{q}}\right]\nonumber \\
&+c\tau^{1-\frac{n}{q}}\left(\rr^{q-n}\int_{B_{\rr}^{+}(x_{0})}\snr{Dg}^{q} \ \dx\right)^{\frac{1}{q}}
\end{flalign*}
with $c\equiv c(\texttt{data})$. Recalling that $\tau\in \left(0,\frac{1}{4}\right)$, it is easy to see that
\begin{flalign*}
&\tau^{\frac{\tilde{p}_{2}(\rr)}{p_{2}(\rr)}}\left[c\tau^{\frac{(n-\nu)}{p_{2}(\rr)}}+c\tau^{-\frac{\nu}{p_{2}(\rr)}}\left(\varepsilon^{\frac{\kappa}{p_{2}(\rr)}}+R_{*}^{\frac{\alpha}{p_{2}(\rr)}}\right)+c\tau^{\frac{\vartheta-\nu}{p_{2}(\rr)}}\right]\nonumber \\
&\qquad \le \tau^{\frac{\tilde{p}_{2}(\rr)}{p_{2}(\rr)}}\left[c\tau^{\frac{n-\vartheta}{\gamma_{2}}}+c\tau^{-\frac{\vartheta}{\gamma_{1}}}\left(\varepsilon^{\frac{\kk}{\gamma_{2}}}+R_{*}^{\frac{\alpha}{\gamma_{2}}}\right)+c\tau^{\frac{\vartheta-\nu}{\gamma_{2}}}\right],
\end{flalign*}
therefore, merging the content of the two above displays we obtain
\begin{flalign}
\psi(x_{0},\tau \rr)\le& \tau^{\frac{\tilde{p}_{2}(\rr)}{p_{2}(\rr)}}\left[c\tau^{\frac{n-\vartheta}{\gamma_{2}}}+c\tau^{-\frac{\vartheta}{\gamma_{1}}}\left(\varepsilon^{\frac{\kk}{\gamma_{2}}}+R_{*}^{\frac{\alpha}{\gamma_{2}}}\right)+c\tau^{\frac{\vartheta-\nu}{\gamma_{2}}}\right]\nonumber \\
&\cdot\left[\psi^{+}(x_{0},\rr)+\left(\rr^{q-n}\int_{B_{\rr}^{+}(x_{0})}\snr{Dg}^{q} \ \dx\right)^{\frac{p_{2}(\rr)}{q}}\right]\nonumber \\
&+c\tau^{1-\frac{n}{q}}\left(\rr^{q-n}\int_{B_{\rr}^{+}(x_{0})}\snr{Dg}^{q} \ \dx\right)^{\frac{1}{q}}\nonumber\\
\le &\tau^{\frac{\tilde{p}_{2}(\rr)}{p_{2}(\rr)}}\left[c\tau^{\frac{n-\vartheta}{\gamma_{2}}}+c\tau^{-\frac{\vartheta}{\gamma_{1}}}\left(\varepsilon^{\frac{\kk}{\gamma_{2}}}+R_{*}^{\frac{\alpha}{\gamma_{2}}}\right)+c\tau^{\frac{\vartheta-\nu}{\gamma_{2}}}\right]\nonumber \\
&\cdot\left[\psi(x_{0},\rr)+\left(\rr^{q-n}\int_{B_{\rr}^{+}(x_{0})}\snr{Dg}^{q} \ \dx\right)^{\frac{p_{2}(\rr)}{q}}\right]\nonumber \\
&+c\tau^{1-\frac{n}{q}}\left(\rr^{q-n}\int_{B_{\rr}^{+}(x_{0})}\snr{Dg}^{q} \ \dx\right)^{\frac{1}{q}}\label{t.26}
\end{flalign}
with $c\equiv c(\texttt{data})$. Select $\tau$, $\varepsilon$ and $R_{*}$ so small that
\begin{flalign}\label{t.27}
&\tau^{\frac{\tilde{p}_{2}(\rr)}{p_{2}(\rr)}}\le \frac{1}{8},\qquad c'c\tau^{\frac{n-\vartheta}{\gamma_{2}}}\le \frac{1}{3},\qquad c'c\tau^{-\frac{\vartheta}{\gamma_{1}}}\left(\varepsilon^{\frac{\kappa}{\gamma_{2}}}+R_{*}^{\frac{\alpha}{\gamma_{2}}}\right)\le \frac{1}{3}\nonumber \\
&c'c\tau^{\frac{\vartheta-\nu}{\gamma_{2}}}\le \frac{1}{3},\qquad (c'+c)\tau^{1-\frac{n}{q}}\le\frac{1}{8},
\end{flalign}
where $c'$ is the same constant appearing in \eqref{eneq1}. By \eqref{eneq1} and \eqref{small1}, with the choice made above we can conclude that
\begin{flalign*}
\chi^{+}(x_{0},\tau \rr)\le& \frac{1}{2}\left[\psi^{+}(x_{0},\rr)+\left(\rr^{q-n}\int_{B_{\rr}^{+}(x_{0})}\snr{Dg}^{q} \ \dx\right)^{\frac{p_{2}(\rr)}{q}}\right]\nonumber \\
&+\frac{1}{2}\left(\rr^{q-n}\int_{B_{\rr}^{+}(x_{0})}\snr{Dg}^{q} \ \dx\right)^{\frac{1}{q}}<\varepsilon,
\end{flalign*}
so iterations are legal. Moreover, combining \eqref{t.26} and \eqref{t.27} we have
\begin{flalign}\label{t.28}
\psi(x_{0},\tau \rr)\le \tau^{\frac{\tilde{p}_{2}(\rr)}{p_{2}(\rr)}}\psi(x_{0},\rr)+c\rr^{1-\frac{n}{q}}\left(\int_{B_{\rr}^{+}(x_{0})}\snr{Dg}^{q} \ \dx\right)^{\frac{1}{q}}
\end{flalign}
for $c\equiv (\texttt{data},q)$. Iterating \eqref{t.28} we end up with
\begin{flalign}\label{t.29}
\psi(x_{0},\tau^{k}\rr)\le& \tau^{k\frac{\tilde{p}_{2}(\rr)}{p_{2}(\rr)}}\psi(x_{0},\rr)\nonumber \\
&+c\left(\int_{B_{\rr}^{+}(x_{0})}\snr{Dg}^{q} \ \dx\right)^{\frac{1}{q}}\rr^{1-\frac{n}{q}}\tau^{(k-1)\left(1-\frac{n}{q}\right)}\sum_{j=0}^{k-1}\tau^{j\left(\frac{\tilde{p}_{2}(\rr)}{p_{2}(\rr)}-1+\frac{n}{q}\right)}.
\end{flalign}
Since $\frac{\tilde{p}_{2}(\rr)}{p_{2}(\rr)}-1+\frac{n}{q}>0$, the series on the right-hand side of \eqref{t.28} converges, so we have
\begin{flalign}\label{t30}
\psi(x_{0},\tau^{k}\rr)\le \tau^{k\frac{\tilde{p}_{2}(\rr)}{p_{2}(\rr)}}\psi(x_{0},\rr)+c\left(\int_{B_{\rr}^{+}(x_{0})}\snr{Dg}^{q} \ \dx\right)^{\frac{1}{q}}\rr^{1-\frac{n}{q}}\tau^{(k-1)\left(1-\frac{n}{q}\right)}.
\end{flalign}
Whenever $0<\varsigma<\rr$ we can find $k\in \N$ so that $\tau^{k+1}\rr\le \varsigma<\tau^{k}\rr$, so using \eqref{t30} we obtain
\begin{flalign}\label{t31}
\psi(x_{0},\varsigma)\le& \tau^{1-\frac{n}{p_{2}(\rr)}}\psi(x_{0},\tau^{k}\rr)\nonumber \\
\le& \tau^{1-\frac{n}{p_{2}(\rr)}}\left[\tau^{k\frac{\tilde{p}_{2}(\rr)}{p_{2}(\rr)}}\psi(x_{0},\rr)+c\rr^{1-\frac{n}{q}}\tau^{(k-1)\left(1-\frac{n}{q}\right)}\left(\int_{B_{\rr}^{+}(x_{0})}\snr{Dg}^{q} \ \dx\right)^{\frac{1}{q}}\right]\nonumber \\
\le &c\tau^{-1-\frac{n}{\gamma_{1}}}\left[\left(\frac{\varsigma}{\rr}\right)^{\frac{\tilde{p}_{2}(\rr)}{p_{2}(\rr)}}\psi(x_{0},\rr)+\rr^{1-\frac{n}{q}}\left(\int_{B_{\rr}^{+}(x_{0})}\snr{Dg}^{q} \ \dx\right)^{\frac{1}{q}}\right]\nonumber \\
\le &c\left[\left(\frac{\varsigma}{\rr}\right)^{1-\frac{n}{q}}\psi(x_{0},\rr)+\varsigma^{1-\frac{n}{q}}\left(\int_{B_{\rr}^{+}(x_{0})}\snr{Dg}^{q} \ \dx\right)^{\frac{1}{q}}\right],
\end{flalign}
for $c\equiv c(\texttt{data})$. To summarize, we just got that, if $x_{0}\in \Gamma_{R}$ is any point satisfying \eqref{small1} on $B_{\rr}^{+}(x_{0})$ for some $\rr\in (0,R-\snr{x_{0}})$ then
\begin{flalign}\label{gg0}
\psi(x_{0},\varsigma)\le& c\left[\left(\frac{\varsigma}{\rr}\right)^{1-\frac{n}{q}}\psi(x_{0},\rr)+\varsigma^{1-\frac{n}{q}}\left(\int_{B_{\rr}^{+}(x_{0})}\snr{Dg}^{q} \ \dx\right)^{\frac{1}{q}}\right]\nonumber \\
\le &c\left[\left(\frac{\varsigma}{\rr}\right)^{1-\frac{n}{q}}\left(\rr^{p_{2}(\rr)}\mint_{B_{\rr}^{+}(x_{0})}(1+\snr{Du}^{2})^{\frac{p_{2}(\rr)}{2}} \ \dx\right)^{\frac{1}{p_{2}(\rr)}}+\varsigma^{1-\frac{n}{q}}\left(\int_{B_{\rr}(x_{0})}\snr{Dg}^{q} \ \dx\right)^{\frac{1}{q}}\right]\nonumber \\
\le &c\left[\left(\frac{\varsigma}{\rr}\right)^{1-\frac{n}{q}}\chi^{+}(x_{0},\rr)+\varsigma^{1-\frac{n}{q}}\nr{Dg}_{L^{q}(B_{1}^{+})}\right]\le c\left[\left(\frac{\varsigma}{\rr}\right)^{1-\frac{n}{q}}+\varsigma^{1-\frac{n}{q}}\right]\le c\left(\frac{\varsigma}{\rr}\right)^{1-\frac{n}{q}}
\end{flalign}
for $c\equiv c(\texttt{data},\nr{Dg}_{L^{q}(B_{1}^{+})})$. In \eqref{gg0} we also used \eqref{small1} to control $\chi^{+}(x_{0},\rr)$ with $\varepsilon\in (0,1]$.
\subsubsection*{Step 5: Partial H\"older continuity}
Now we aim to prove an estimate analogous to \eqref{gg0} valid also for points $x_{0}\in \bar{B}_{R}^{+}$ not necessarily belonging to $\Gamma_{R}$. As in \cite[Proof of Lemma 2]{jm}, we fix $\iota =\frac{1}{1000}$ and $x_{0}\in B_{R}^{+}$ satisfying \eqref{small1} for some $\rr\in (0,R-\snr{x_{0}})$. For $0<\sigma<\rr$ we distinguish two main cases: $\varsigma< \iota \rr$ or $\varsigma\ge \iota \rr$.\\
\emph{Case 1: $\varsigma< \iota \rr$.} We take $\hat{x}\in \Gamma_{R}$ so that $d:=\dist(x_{0},\Gamma_{R})=\snr{x_{0}-\hat{x}}$. Now, if $\iota \rr \ge d$ we notice that $$B_{d}(x_{0})\subset B_{2d}(\hat{x})\subset B_{\rr/2}(x_{0})\subset B_{\rr}(\hat{x}),$$
therefore
\begin{flalign*}
\chi^{+}\left(\hat{x},\frac{\rr}{2}\right)\le c\chi^{+}(x_{0},\rr)\le c(n,\gamma_{1},\gamma_{2},q)\varepsilon,
\end{flalign*}
so reducing the size of $\varepsilon\equiv \varepsilon(\texttt{data})$ determined in \eqref{t.27} to $\varepsilon':=\frac{\varepsilon}{2c}$ we end up with
\begin{flalign}\label{gg1}
\chi^{+}\left(\hat{x},\frac{\rr}{2}\right)<\varepsilon'.
\end{flalign}
If $\iota\rr>\varsigma \ge d$ we immediately notice that 
$$B_{\varsigma}(x_{0})\subset B_{4\varsigma}(\hat{x})\subset B_{\rr/4}(\hat{x})\subset B_{\rr}(x_{0}),$$ and, since \eqref{gg1} legalizes \eqref{gg0} with $x_{0}$ replaced by $\hat{x}$, we obtain
\begin{flalign*}
\psi(x_{0},\varsigma)\le& c\psi(\hat{x},4\varsigma)\le c8^{1-\frac{n}{q}}\left(\frac{\varsigma}{\rr}\right)^{1-\frac{n}{q}},
\end{flalign*}
for $c\equiv c(\texttt{data},\nr{Dg}_{L^{q}(B_{1}^{+})})$. On the other hand, if $\iota\rr\ge d>\varsigma$ we have that
$$
B_{\varsigma}(x_{0})\subset B_{2d}(\hat{x})\subset B_{\rr/4}(\hat{x})\subset B_{\rr}(x_{0}),
$$
so, by \eqref{gg1} and \eqref{gg0} (with $x_{0}$ replaced by $\hat{x}$ of course) we get
\begin{flalign*}
\psi(x_{0},\varsigma)\le& c\frac{\varsigma}{d}\psi(\hat{x},2d)\le c\frac{\varsigma}{d}\left[\left(\frac{4d}{\rr}\right)^{1-\frac{n}{q}}+(2d)^{1-\frac{n}{q}}\right]\nonumber \\
\le &c\left(\frac{\varsigma}{d}\right)^{\frac{n}{q}}\left[\left(\frac{\varsigma}{\rr}\right)^{1-\frac{n}{q}}+\varsigma^{1-\frac{n}{q}}\right]\le c\left[\left(\frac{\varsigma}{\rr}\right)^{1-\frac{n}{q}}+\varsigma^{1-\frac{n}{q}}\right]\le c\left(\frac{\varsigma}{\rr}\right)^{1-\frac{n}{q}}
\end{flalign*}
also in this case, with $c\equiv c(\texttt{data},\nr{Dg}_{L^{q}(B_{1}^{+})})$.  Now we consider the occurrence $\varsigma<\iota \rr<d$. It follows that $B_{\iota \rr}(x_{0})\Subset B_{R}^{+}$ and
\begin{flalign*}
2\iota \rr\left(\mint_{B_{2\iota\rr}(x_{0})}(1+\snr{Du}^{2})^{\frac{p_{2}(2\iota\rr)}{2}} \ \dx\right)^{\frac{1}{p_{2}(2\iota\rr)}}\le c\iota^{1-\frac{n}{\gamma_{1}}}\chi^{+}(x_{0},\rr)<c(n,\gamma_{1},\gamma_{2},q)\varepsilon.
\end{flalign*}
Reducing the size of $\varepsilon$ in such a way that $c\varepsilon\le \varepsilon_{0}$, where $\varepsilon_{0}$ is the smallness threshold appearing in \cite[(3.16)]{decv} we obtain
\begin{flalign*}
2\iota \rr\left(\mint_{B_{2\iota\rr}(x_{0})}(1+\snr{Du}^{2})^{\frac{p_{2}(2\iota\rr)}{2}} \ \dx\right)^{\frac{1}{p_{2}(2\iota\rr)}}<\varepsilon_{0}.
\end{flalign*}
Then, \cite[estimates (3.40)-(3.43)]{decv} apply, thus getting
\begin{flalign*}
\left(\varsigma^{p_{2}(\varsigma)}\mint_{B_{\varsigma}(x_{0})}(1+\snr{Du}^{2})^{p_{2}(\varsigma)} \ \dx\right)^{\frac{1}{p_{2}(\varsigma)}}\le \iota^{-\beta_{0}}c(\texttt{data},\beta_{0})\left(\frac{\varsigma}{\rr}\right)^{\beta_{0}},
\end{flalign*}
for all $\beta_{0}\in (0,1)$. Recalling the explicit expression of $\psi(x_{0},\cdot)$ we then bound
\begin{flalign*}
\psi(x_{0},\sigma)\le& c\left(\varsigma^{p_{2}(\varsigma)}\mint_{B_{\varsigma}(x_{0})}(1+\snr{Du}^{2})^{\frac{p_{2}(\varsigma)}{2}} \ \dx\right)^{\frac{1}{p_{2}(\varsigma)}}\nonumber \\
&+c\left(\varsigma^{q-n}\int_{B_{\varsigma}(x_{0})\cap B_{R}^{+}}\snr{Dg}^{q} \ \dx \right)^{\frac{1}{q}}\le c\left[\left(\frac{\varsigma}{\rr}\right)^{\beta_{0}}+\varsigma^{1-\frac{n}{q}}\right],
\end{flalign*}
with $c\equiv c(\texttt{data},\nr{Dg}_{L^{q}(B_{1}^{+})},\beta_{0})$. The desired estimate follows by choosing $\beta_{0}=1-\frac{n}{q}$ in the previous display.\\
\emph{Case 2: $\varsigma\ge \iota \rr$.} Estimate \eqref{gg0} trivially holds with a costant $c\equiv c(\texttt{data},\nr{Dg}_{L^{q}(B_{1}^{+})})$. \\
All in all, we have just proved that if $x_{0}\in \bar{B}_{R}^{+}$ satisfies \eqref{small1} on $B_{\rr}(x_{0})\cap B_{R}^{+}$ for some $\rr\in (0,R-\snr{x_{0}})$, then
\begin{flalign}\label{gg2}
\psi(x_{0},\varsigma)\le c(\texttt{data},\nr{Dg}_{L^{q}(B_{1}^{+})})\left(\frac{\varsigma}{\rr}\right)^{1-\frac{n}{q}}.
\end{flalign}
Now, by the continuity of Lebesgue's integral and of the mapping $x_{0}\mapsto p_{2}(x_{0},\rr)$, we can conclude that if \eqref{small1} holds for $x_{0}$ on $B_{\rr}(x_{0})\cap B_{R}^{+}$ then it holds also on $B_{\rr}(y)\cap B_{R}^{+}$ for all $y\in \bar{B}_{1}^{+}$ belonging to a sufficiently small, relatively open neighborhood of $x_{0}$, say, $B_{x_{0}}\subset \bar{B}_{R}^{+}$. Then the set
\begin{flalign*}
D_{0}:=\left\{y\in B_{x_{0}}\colon \chi^{+}(y,\rr)<\varepsilon \ \mbox{on} \ B_{\rr}(y)\cap B_{R}^{+}, \ R\in (0,R_{*}], \ \rr\in (0,R-\snr{y}) \right\}
\end{flalign*}
is relatively open, so via \eqref{gg2} we can conclude that
\begin{flalign}\label{t32}
&\left(\varsigma^{-n\left(1-\frac{\gamma_{1}}{q}\right)}\int_{B_{\varsigma}(x_{0})}\snr{D\tilde{u}}^{\gamma_{1}} \ \dx\right)^{\frac{1}{\gamma_{1}}}\le \psi(x_{0},\varsigma)\le c\left(\frac{\varsigma}{\rr}\right)^{1-\frac{n}{q}},
\end{flalign}
where $c\equiv c(\texttt{data},\nr{Dg}_{L^{q}(B_{1}^{+})})$. By \eqref{t32} and the well-known characterization of H\"older continuity due to Campanato and Meyers we can conclude that $\tilde{u}$ is $\left(1-\frac{n}{q}\right)$-H\"older continuous in a neighborhood of $D_{0}$, which in turn implies that $u\in C^{0,1-\frac{n}{q}}_{loc}(D_{0},\m)$.
\subsubsection*{Step 6: Hausdorff dimension of the singular set} Given the characterization of $D_{0}$, we easily see that the singular set $\Sigma_{0}(u,B_{\rr}(x_{0})\cap B_{R}^{+})$ can be defined as 
$$
\Sigma_{0}(u,B_{\rr}(x_{0})\cap B_{R}^{+}):=\bar{B}_{R}^{+}\cap \bar{B}_{\rr}(x_{0})\setminus D_{0}.
$$
Moreover, for $y\in B_{\rr}(x_{0})\cap B_{R}^{+}$, being $g\in C^{0,1-\frac{n}{q}}(\bar{B}_{1}^{+},\m)$ we see that
\begin{flalign*}
\limsup_{\varsigma\to 0}\chi^{+}(y,\varsigma)\le& \limsup_{\varsigma\to 0}\left(\varsigma^{p_{2}(y,\varsigma)-n}\int_{B_{\varsigma}(y)\cap B_{R}^{+}}(1+\snr{Du}^{2})^{\frac{p_{2}(y,\varsigma)}{2}} \ \dx\right)^{\frac{1}{p_{2}(y,\varsigma)}}\nonumber \\
&+\limsup_{\varsigma\to 0}\left(\varsigma^{q-n}\int_{B_{\varsigma}(y)\cap B_{R}^{+}}\snr{Dg}^{q} \ \dx\right)^{\frac{1}{q}}\nonumber \\
\le &\limsup_{\varsigma\to 0}\left(\varsigma^{p_{2}(y,\varsigma)-n}\int_{B_{\varsigma}(y)\cap B_{R}^{+}}(1+\snr{Du}^{2})^{\frac{p_{2}(y,\varsigma)}{2}} \ \dx\right)^{\frac{1}{p_{2}(y,\varsigma)}},
\end{flalign*}
therefore
\begin{flalign*}
&\Sigma_{0}(u,B_{\rr}(x_{0})\cap B_{R}^{+})\subset \left\{y \in \bar{B}_{\rr}(x_{0})\cap \bar{B}_{R}^{+}\colon \limsup_{\varsigma\to 0}\psi^{+}(y,\varsigma)>0\right\}.
\end{flalign*}
Now, notice that, as in \eqref{t12},
\begin{flalign}\label{61}
p_{2}(y,\varsigma)\le (1+\sigma_{0})p_{1}(x_{0},R_{*}) \ \ \mbox{for all} \ \ 0<\varsigma\le R_{*}, \ \ B_{\varsigma}(y)\cap B_{R}^{+}\subset B_{\rr}(x_{0})\cap B_{R}^{+},
\end{flalign}
so we obtain, 
\begin{flalign*}
&\left(\varsigma^{p_{2}(y,\varsigma)}\mint_{B_{\varsigma}(y)\cap B_{R}^{+}}(1+\snr{Du}^{2})^{\frac{p_{2}(y,\varsigma)}{2}} \ \dx\right)^{\frac{1}{p_{2}(y,\varsigma)}}\nonumber \\
&\qquad \le\left(\varsigma^{p_{1}(x_{0},R_{*})(1+\sigma_{0})}\mint_{B_{\varsigma}(y)\cap B_{R}^{+}}(1+\snr{Du}^{2})^{\frac{p_{1}(x_{0},R_{*})(1+\sigma_{0})}{2}} \ \dx\right)^{\frac{1}{p_{1}(x_{0},R_{*})(1+\sigma_{0})}},
\end{flalign*}
which by \eqref{bougeh2} is finite. This allows concluding that $\Sigma_{0}(u,B_{\rr}(x_{0})\cap B_{R}^{+})$ is contained into the set
\begin{flalign*}
D_{1}:=\left\{y \in \bar{B}_{\rr}(x_{0})\cap\bar{B}_{R}^{+}\colon \limsup_{\varsigma\to 0}\phi^{+}(y,\varsigma,p_{1}(x_{0},R_{*})(1+\sigma_{0}))^{p_{1}(x_{0},R_{*})(1+\sigma_{0})}>0\right\}.
\end{flalign*}
By \cite[Proposition 2.7]{giu} it follows that $\dim_{\mathcal{H}}(D_{1})\le n-p_{1}(x_{0},R_{*})(1+\sigma_{0})$, so, by $\eqref{asp}_{2}$ we easily have that $\dim_{\mathcal{H}}(D_{1})< n-\gamma_{1}$ and so $\dim_{\mathcal{H}}(\Sigma_{0}(u,B_{\rr}(x_{0})\cap B_{R}^{+}))< n-\gamma_{1}$. The proof is complete.
\end{proof}
Once Proposition \ref{pp1} is available, we can cover $B^{+}_{1}$ with balls having the same features of $B_{\rr}(x_{0})\cap B_{R}^{+}$ and remembering that, by $\eqref{asp}_{2}$, $p_{1}(x_{0},R_{*})\ge \gamma_{1}$, we obtain that
$
\dim_{\mathcal{H}}(\Sigma_{0}(u))\le n-\gamma_{1}(1+\delta_{0})<n-\gamma_{1},
$
and so $\dim_{\mathcal{H}}(\Sigma_{0}(u))<n-\gamma_{1}$. Via a standard covering argument, we can conclude that $u\in C^{0,1-\frac{n}{q}}_{loc}(\bar{B}_{1}^{+}\setminus \Sigma_{0}(u),\m)$ and the proof of Theorem \ref{t1} is complete.
\section{Full boundary regularity}\label{fullb}
In this section we recover a regularity criterion based on the result in Theorem \ref{t1}. The main preliminary step consists in proving compactness of sequences of minimizers of \eqref{pd1} under uniform assumptions, see \cite{decv,dugrkr,rata}.
\begin{remark}
\emph{We will always assume that $\gamma_{2}<n$, otherwise, as stressed in \emph{Step 1} of the proof of Theorem \ref{t1}, we would have $u$ H\"older continuous in a small neighborhood of any point $\bar{x}\in \bar{B}_{1}^{+}$ so that $p(\bar{x})\ge n$ for free by Morrey's embedding theorem.}
\end{remark}
\begin{lemma}\label{comp}
Let $\{k_{j}\}, \{p_{j}\}$ be two sequences of H\"older continuous functions satisfying
\begin{flalign}\label{com1}
\begin{cases}
    \ \sup_{j \in \N}[k_{j}]_{0,\nu}<c_{k} \quad \mbox{for some} \ \ \nu\in (0,1]\\
\ \lambda\le k_{j}(x)\le \Lambda \ \mathrm{for \ all \ }x \in \bar{B}_{1}^{+}\\
\ \nr{k_{j}-k_{0}}_{L^{\infty}(\bar{B}_{1}^{+})}\to 0, \ k_{0}(\cdot) \in C^{0,\nu}(\bar{B}_{1}^{+})
\end{cases} 
\end{flalign}
and
\begin{flalign}\label{com2}
\begin{cases}
\ \sup_{j \in \N}[p_{j}]_{0,\alpha}<c_{p}\quad \mbox{for some} \ \ \alpha\in (0,1]\\
\ p_{j}(x)\ge \gamma_{1}>1 \ \mathrm{for \ all \ }x \in \bar{B}_{1}^{+}, j \in \N\\
\ \nr{p_{j}-p_{0}}_{L^{\infty}(\bar{B}_{1}^{+})}\to 0,\ p_{0}\ge \gamma_{1}>1 \ \mathrm{constant},
\end{cases}
\end{flalign}
respectively. For each $j \in \N$, let $u_{j}\in W^{1,p_{j}(\cdot)}(B_{1}^{+},\m)$ be a constrained minimizer of
\begin{flalign*}
\mathcal{E}_{j}(w,B_{1}^{+}):=\int_{B_{1}^{+}}k_{j}(x)\snr{Dw}^{p_{j}(x)} \ \dx,
\end{flalign*}
in class $\mathcal{C}_{g_{j}}^{p_{j}(\cdot)}(B_{1}^{+},\m)$, where the manifold $\m$ is as in \eqref{m} and the sequence $\{g_{j}\}\subset W^{1,q}(\bar{B}_{1}^{+},\m)$, uniformly satisfying \eqref{g}, is weakly convergent to some $g_{0}\in W^{1,q}(\bar{B}_{1}^{+},\m)$. Then, there exists a subsequence, still denoted by $\{u_{j}\}$, such that 
\begin{flalign}\label{com3}
u_{j}\rightharpoonup u_{0} \ \ \mathrm{weakly \ in \ }W^{1,(1+\tilde{\sigma})p_{0}}(B_{R}^{+},\m)
\end{flalign}
for some $\tilde{\sigma}>0$ and any $R \in (0,1)$. In particular, $u_{0}$ is a constrained minimizer of the functional
\begin{flalign*}
\mathcal{E}_{0}(w,B_{R}^{+}):=\int_{B_{R}^{+}}k_{0}(x)\snr{Dw}^{p_{0}} \ \dx
\end{flalign*}
in class $\mathcal{C}^{p_{0}}_{g_{0}}(B_{R}^{+},\m)$. Moreover,
\begin{flalign*}
\mathcal{E}_{j}(u_{j},B_{R}^{+})\to \mathcal{E}_{0}(u_{0},B_{R}^{+})\quad \mbox{for all} \ \ R\in (0,1).
\end{flalign*}
Finally, if $x_{j}$ is a singular point of $u_{j}$ and $x_{j}\to x_{0}$, then $x_{0}$ is a singular point for $u_{0}$.
\end{lemma}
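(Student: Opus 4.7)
The plan is to establish a uniform bound on $\mathcal{E}_{j}(u_{j})$, upgrade it to uniform higher integrability via Lemma \ref{geh}, and then extract a subsequence converging weakly in \eqref{com3}. Testing minimality of $u_{j}$ against $g_{j}$ itself and using $q>\gamma_{2}\ge p_{j}(x)$ (so that $\snr{Dg_{j}}^{p_{j}(x)}\le 1+\snr{Dg_{j}}^{q}$) gives
\[
\int_{B_{1}^{+}} k_{j}(x)\snr{Du_{j}}^{p_{j}(x)}\dx \le \int_{B_{1}^{+}} k_{j}(x)\snr{Dg_{j}}^{p_{j}(x)}\dx \le \Lambda\bigl(\snr{B_{1}^{+}}+\sup_{j}\nr{Dg_{j}}_{L^{q}(B_{1}^{+})}^{q}\bigr).
\]
Lemma \ref{geh}, whose constants depend only on the quantities controlled by \eqref{com1}--\eqref{com2} and are thus uniform in $j$, upgrades this on every $B_{R}^{+}$ with $R<1$ to $\sup_{j}\int_{B_{R}^{+}}\snr{Du_{j}}^{p_{j}(x)(1+\sigma)}\dx<\infty$ for some $\sigma>0$; the uniform convergence $p_{j}\to p_{0}$ then yields a uniform bound in $W^{1,p_{0}(1+\tilde\sigma)}(B_{R}^{+},\RN)$ for a possibly smaller $\tilde\sigma>0$, whence \eqref{com3} via a diagonal extraction. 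Rellich--Kondrachov gives a.e. convergence $u_{j}\to u_{0}$, so $u_{0}(x)\in\m$ a.e. Since $q>n$, the compact Morrey embedding turns $g_{j}\rightharpoonup g_{0}$ into uniform convergence on $\bar B_{1}^{+}$; combined with $u_{j}-g_{j}\in W_{0}^{1,\gamma_{1}}(B_{R}^{+},\RN)$, this forces $\texttt{tr}_{\Gamma_{R}}(u_{0})=\texttt{tr}_{\Gamma_{R}}(g_{0})$, so $u_{0}\in\mathcal{C}_{g_{0}}^{p_{0}}(B_{R}^{+},\m)$.

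Minimality of $u_{0}$ together with energy convergence is obtained via a recovery-sequence argument. Given any competitor $w\in\mathcal{C}_{g_{0}}^{p_{0}}(B_{R}^{+},\m)$, approximate $w-g_{0}\in W_{0}^{1,p_{0}}$ by $\phi_{\eta}\in C_{c}^{\infty}(B_{R}^{+},\RN)$ and set $v_{j,\eta}:=g_{j}+\phi_{\eta}$; then $v_{j,\eta}-g_{j}\in W_{0}^{1,p_{j}(\cdot)}$ and $v_{j,\eta}=g_{j}\in\m$ outside the support of $\phi_{\eta}$, hence in a neighborhood of $\partial B_{R}^{+}$. Since $g_{j}\to g_{0}$ uniformly and $\phi_{\eta}$ is compactly supported, $v_{j,\eta}$ lies in the tubular neighborhood of $\m$ on its support for $j$ large, so Lemma \ref{exlem}---or, to obtain a sharp energy matching, the locally Lipschitz retraction of Lemma \ref{hoplem} combined with a generic-translation averaging argument in the spirit of \cite{harlin,decv}---produces $\tilde v_{j,\eta}\in\mathcal{C}_{g_{j}}^{p_{j}(\cdot)}(B_{R}^{+},\m)$ with $\mathcal{E}_{j}(\tilde v_{j,\eta},B_{R}^{+})\to\mathcal{E}_{0}(g_{0}+\phi_{\eta},B_{R}^{+})$ as $j\to\infty$; Lemma \ref{L0}(i.) with the equi-higher-integrability of $Dv_{j,\eta}$ and the uniform convergence of $k_{j}$, $p_{j}$ do the exponent-drift bookkeeping. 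The minimality of $u_{j}$ then yields
\begin{flalign*}
\limsup_{j}\mathcal{E}_{j}(u_{j},B_{R}^{+})\le\lim_{j}\mathcal{E}_{j}(\tilde v_{j,\eta},B_{R}^{+})=\mathcal{E}_{0}(g_{0}+\phi_{\eta},B_{R}^{+})\xrightarrow{\eta\to 0}\mathcal{E}_{0}(w,B_{R}^{+}).
\end{flalign*}
Weak lower semicontinuity of the variable-exponent convex integrand---standard under uniform $k_{j}\to k_{0}$, $p_{j}\to p_{0}$ and the equi-higher-integrability of $Du_{j}$---delivers $\mathcal{E}_{0}(u_{0},B_{R}^{+})\le\liminf_{j}\mathcal{E}_{j}(u_{j},B_{R}^{+})$, which together with the previous chain proves that $u_{0}$ is a minimizer; specializing $w=u_{0}$ gives $\mathcal{E}_{j}(u_{j},B_{R}^{+})\to\mathcal{E}_{0}(u_{0},B_{R}^{+})$.

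For the final claim on singular points, suppose by contradiction $x_{j}\in\Sigma_{0}(u_{j})$, $x_{j}\to x_{0}$, but $x_{0}\notin\Sigma_{0}(u_{0})$. By Proposition \ref{pp1} applied to $u_{0}$, the smallness condition \eqref{smallq} holds strictly at $(x_{0},\rho)$ for some $\rho>0$, say $\chi^{+}_{u_{0}}(x_{0},\rho)<\varepsilon/2$ in the notation of Proposition \ref{pp1}. The energy convergence from the previous paragraph, the weak convergence of $Dg_{j}$ in $L^{q}$, and the uniform convergence $p_{j}\to p_{0}$ transfer this, up to a negligible perturbation, to $\chi^{+}_{u_{j}}(x_{j},\rho)<\varepsilon$ for $j$ large, placing $x_{j}$ in the regular set by Proposition \ref{pp1}---contradiction. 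The hard part of the whole program is the recovery step, where the manifold constraint and the drifting exponent $p_{j}(\cdot)$ must be handled jointly: the uniform convergence of the boundary data from the compact Morrey embedding is what brings the approximants inside the tubular neighborhood of $\m$; Lemma \ref{hoplem} preserves the manifold constraint with sharp energy control via generic translations; and Lemma \ref{L0}(i.) combined with the uniform higher integrability from Lemma \ref{geh} controls the exponent drift.
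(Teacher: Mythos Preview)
Your overall three-step strategy (uniform higher integrability $\Rightarrow$ weak compactness; recovery sequence for minimality and energy convergence; transfer of singularity) matches the paper's, but two of the steps contain real gaps.

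\textbf{Recovery sequence.} Your competitor $v_{j,\eta}=g_{j}+\phi_{\eta}$ is \emph{not} in the tubular neighborhood of $\m$ on the support of $\phi_{\eta}$: the approximation $\phi_{\eta}\to w-g_{0}$ is only in $W_{0}^{1,p_{0}}$, and since $p_{0}<n$ (the case $p_{0}\ge n$ is trivial, see \emph{Step 1} of Proposition~\ref{pp1}) this gives no $L^{\infty}$ closeness of $g_{0}+\phi_{\eta}$ to $w\in\m$. Falling back on Lemma~\ref{exlem} (or the Hardt--Lin averaging you allude to) only yields $\int\snr{D\tilde v_{j,\eta}}^{p_{j}(x)}\dx\le c\int\snr{Dv_{j,\eta}}^{p_{j}(x)}\dx$ with a fixed $c\equiv c(N,\m,\gamma_{2})>1$; this factor survives in the limit and produces $\mathcal{E}_{0}(u_{0},B_{R}^{+})\le c\,\mathcal{E}_{0}(w,B_{R}^{+})$ rather than minimality. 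The paper circumvents this by choosing as target not a generic $w$ but a minimizer $\tilde u_{0}$ of the frozen functional in $\hat{\mathcal{C}}^{p_{0}}_{u_{0}}(B_{R}^{+},\m)$, which is already $\m$-valued. The comparison map equals $\tilde u_{0}$ on a bulk region $\{\eta(\Phi(x))=1\}$ and is obtained from Lemma~\ref{exlem} applied to the interpolant $\tilde u_{0}+(1-\eta(\Phi))(u_{j}-u_{0})$ only on the thin collar $\{0\le\eta(\Phi)<1\}$. The crucial point is that the Lemma~\ref{exlem} constant now multiplies only the collar energy, which is $o(\theta)+o(j)$ by equi-integrability, so the spurious factor vanishes as $\theta\to 0$ after $j\to\infty$.

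\textbf{Singular points.} Your contradiction requires transferring an \emph{upper} bound on $\rho^{p_{2}-n}\int_{B_{\rho}(x_{0})}\snr{Du_{0}}^{p_{2}}\dx$ back to the corresponding quantity for $u_{j}$; this needs convergence of $\int_{B_{\rho}(x_{j})}\snr{Du_{j}}^{p_{2,j}}\dx$, hence strong $L^{p}$ convergence of $Du_{j}$ on subballs, which you have not established (energy convergence was only shown on the full $B_{R}^{+}$). The $Dg_{j}$-term has the same defect, since only weak $L^{q}$ convergence is assumed. The paper instead transfers the \emph{lower} bound in the opposite direction: starting from singularity of $x_{j}$, it applies Caccioppoli \eqref{caccine} to replace the gradient quantity by $\mint_{B_{2\rho}^{+}(x_{j})}\snr{(u_{j}-g_{j})/\rho}^{p_{j}(x)}\dx$, and then passes to the limit using only the strong convergences $u_{j}\to u_{0}$ and $g_{j}\to g_{0}$ in $L^{t}$ supplied by Rellich--Kondrachov. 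This yields a nontrivial lower bound on $\limsup_{\rho\to 0}\mint_{B_{2\rho}^{+}(x_{0})}\snr{u_{0}-g_{0}}^{p_{0}}\dx$, forcing $x_{0}$ to be singular without any gradient convergence on subballs.
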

\begin{proof}
For the reader's convenience, we split the proof into three steps.
\subsubsection*{Step 1: Weak convergence.} By assumption, the sequence $\{g_{j}\}$ is weakly convergent in $W^{1,q}(\bar{B}_{1},\m)$, so, we can find a positive, finite constant $M\equiv M(n,\m,q)$ so that
\begin{flalign}\label{t23}
\sup_{j\in N}\nr{g_{j}}_{W^{1,q}(B_{1}^{+})}\le M.
\end{flalign}
Since the whole sequence $\{u_{j}\}$ has image contained into $\m$, which, by $\eqref{m}_{1}$ is compact, we immediately have that $\sup_{j\in \N}\nr{u_{j}}_{L^{\infty}(B_{1}^{+})}\le c(\m)<\infty$, thus, up to extracting a non-relabelled subsequence, 
\begin{flalign}\label{t21}
u_{j}\rightharpoonup u_{0}\quad \mbox{weakly in} \ \ L^{t}(B_{1}^{+},\m)\quad \mbox{for all} \ \ t\in (1,\infty).
\end{flalign}
Moreover, being the assumptions in \eqref{com1}-\eqref{com2} uniform in $j\in \N$, we deduce that Lemma \ref{geh} and Lemma \ref{gehp} for the associated frozen problem hold with constants independent from $j$. In particular, recalling the uniform features of the $g_{j}$'s and combining \eqref{glob} with a standard covering argument we can conclude that $\{u_{j}\}\subset W^{1,p(\cdot)(1+\sigma)}_{loc}(B_{1}^{+},\m)$ for all $\sigma\in\left[0,\min\left\{\sigma_{g},\sigma_{g}'\right\}\right)$. Now we take any ball $B_{\rr}(x_{0})\subset B_{1}$ with $\rr\in \left(0,\frac{1}{4}\min\{1-\snr{x_{0}},R_{*}\}\right]$ and $R_{*}$ as in \eqref{r*}, so we can apply $\eqref{ca1}_{2}$ with any $\sigma\in \left(0,\min\left\{\sigma_{g},\frac{n-\gamma_{2}}{\gamma_{2}},\frac{q}{n}-1\right\}\right)$ to deduce that
\begin{flalign}\label{t22}
\int_{B_{\rr}(x_{0})\cap B_{1}^{+}}\snr{Du_{j}}^{p_{j}(x)(1+\sigma)} \ \dx \le c\rr^{n-p_{2}(x_{0},\rr)(1+\sigma)}\le c(n,N,\m,\gamma_{1},\gamma_{2},q).
\end{flalign} 
In \eqref{t22} we also used \eqref{t23} to incorporate the dependency of the constant from $\nr{Dg_{j}}_{L^{q}(B_{1}^{+})}$ into the one from $(n,\m,q)$. Now set
\begin{flalign*}
\hat{\sigma}_{g}:=\frac{1}{4}\min\left\{\sigma_{g},\sigma'_{g},\delta_{g},\frac{n-\gamma_{2}}{\gamma_{2}},\frac{q}{n}-1\right\},
\end{flalign*}
where $\sigma_{g}$, $\sigma'_{g}$ and $\delta_{g}$ are the same higher integrability threshold determined in Lemmas \ref{geh}, \ref{gehp} respectively and choose any $\sigma\in (0,\hat{\sigma}_{g})$. Because of the uniform convergence of the $p_{j}$'s to the constant $p_{0}$, taking $j\in \N$ sufficiently large we can find positive constants $\gamma_{1}\le q_{1}\le q_{2}\le \gamma_{2}$ such that
\begin{flalign}\label{t24}
&1<q_{1}\le p_{j}(\cdot)\le q_{2}<\infty \ \mbox{on} \ \bar{B}_{1}^{+}, \quad q_{2}\left(1+\frac{\sigma}{2}\right)\le q_{1}(1+\sigma), \quad q_{2}\le p_{0}\left(1+\frac{\sigma}{2}\right)
\end{flalign}
and
\begin{flalign}
0\le q_{2}-q_{1}<\frac{\delta_{g}\gamma_{1}}{16}\quad\mbox{and}\quad 1\le \frac{q_{2}}{q_{1}}<2\label{t24.a}.
\end{flalign}
Combining \eqref{t22}, \eqref{t24} and the choice of $\sigma>0$ we made, we can conclude that
\begin{flalign}\label{t25}
\int_{B_{\rr}(x_{0})\cap B_{1}^{+}}\snr{Du_{j}}^{q_{2}\left(1+\frac{\sigma}{2}\right)} \ \dx \le c(n,\m,\gamma_{1},\gamma_{2},q).
\end{flalign}
By \eqref{t21} and \eqref{t25} we derive the uniform boundedness of the $u_{j}$'s in $W^{1,(1+\sigma/2)q_{2}}(B_{\rr}(x_{0})\cap B_{1}^{+},\m)$, so, up to extract a (non relabelled) subsequence, we obtain that $u_{j}\rightharpoonup \bar{u}_{0}$ weakly in $W^{1,(1+\sigma/2)q_{2}}(B_{\rr}(x_{0})\cap B_{1}^{+},\m)$, for some $\bar{u}_{0}\in W^{1,(1+\sigma/2)q_{2}}(B_{\rr}(x_{0})\cap B_{1}^{+},\m)$. Anyway, by \eqref{t21}, $\bar{u}_{0}(x)=u_{0}(x)$, $u_{0}(x)\in \m$ for a.e. $x\in B_{\rr}(x_{0})\cap B_{1}^{+}$ and, by Rellich-Kondrachov theorem,
\begin{flalign}
&u_{j}\to u_{0} \quad \mbox{strongly in} \ \ L^{(1+\sigma/2)q_{2}}(B_{\rr}(x_{0})\cap B_{1}^{+},\m),\label{t26}\\
&Du_{j}\to Du_{0} \quad \mbox{weakly in}\ \ L^{(1+\sigma/2)q_{2}}(B_{\rr}(x_{0})\cap B_{1}^{+},\mathbb{R}^{N\times n}).\label{t27}
\end{flalign}
From $\eqref{t24}_{1}$ and $\eqref{com2}_{3}$, we see that $q_{2}\ge p_{0}$, therefore \eqref{com3} is proved for instance with 
\begin{flalign}\label{tisi}
\tilde{\sigma}=\frac{\hat{\sigma}_{g}}{4}.
\end{flalign}
Using the lower semicontinuity of the norm, we also have that
\begin{flalign}\label{t28}
\int_{B_{\rr}(x_{0})\cap B_{1}^{+}}\snr{Du_{0}}^{q_{2}\left(1+\frac{\tilde{\sigma}}{2}\right)} \ \dx \le c(\texttt{data}_{p(\cdot)}).
\end{flalign}
Inequality \eqref{t28} and the convergence in \eqref{t26}-\eqref{t27} hold on $B_{\rr}(x_{0})\cap B_{1}^{+}$, but can show that they actually hold on half balls having any radius $R\in (0,1)$. In fact, being $\bar{B}_{1}^{+}$ compact, we can find $m\equiv m(n)$ and a finite family of balls $\left\{B_{\rr_{k}}(x_{k})\right\}_{k=1}^{m}$ so that $\{\rr_{k}\}\subset \left(0,\frac{R_{*}}{4}\right)$ and $B_{1}^{+}\subseteq \bigcup_{k=1}^{m}B_{\rr_{k}}(x_{k})$. Then, given any measurable subset $U\subseteq B_{R}^{+}$ with $R\in (0,1)$, we trivially have that $U\subseteq\bigcup_{k=1}^{m}\left(B_{\rr_{k}}(x_{k})\cap B_{1}^{+}\right)$ and, recalling \eqref{t26}, \eqref{t27} and \eqref{t28},
\begin{flalign}
&\int_{U}\snr{Du_{0}}^{q_{2}\left(1+\frac{\tilde{\sigma}}{2}\right)} \ \dx \le \sum_{k=1}^{m}\int_{B_{\rr_{k}}(x_{k})\cap B_{1}^{+}}\snr{Du_{0}}^{q_{2}\left(1+\frac{\tilde{\sigma}}{2}\right)} \ \dx \le mc\le c(\texttt{data}_{p(\cdot)})\label{t243}\\
&\nr{Du_{j}}_{L^{q_{2}\left(1+\frac{\tilde{\sigma}}{2}\right)}(U)}\le \sum_{k=1}^{m}\nr{Du_{j}}_{L^{q_{2}\left(1+\frac{\tilde{\sigma}}{2}\right)}(B_{\rr_{k}}(x_{k})\cap B_{1}^{+})}\le c(\texttt{data}_{p(\cdot)})\label{t244}\\
&\nr{u_{j}-u_{0}}_{L^{q_{2}\left(1+\frac{\tilde{\sigma}}{2}\right)}(U)}\le \sum_{k=1}^{m}\nr{u_{j}-u_{0}}_{L^{q_{2}\left(1+\frac{\tilde{\sigma}}{2}\right)}(B_{\rr_{k}}(x_{k})\cup B_{1}^{+})}\to 0,\label{t245}
\end{flalign}
so \eqref{com3} is completely proved. Notice that, \eqref{com3} and the weak continuity of the trace operator yield in particular that
\begin{flalign}\label{t240}
\texttt{tr}_{\Gamma_{R}}(u_{0})=\texttt{tr}_{\Gamma_{R}}(g_{0})\quad \mbox{for all} \ \ R\in (0,1).
\end{flalign}
\subsubsection*{Step 2: Compactness.} We fix $R\in (0,1)$ and, as a first step towards the proof of the minimality of $\mathcal{E}_{0}(u_{0},B_{R}^{+})$ in class $\mathcal{C}^{p_{0}}_{g_{0}}(B_{R}^{+},\m)$ we show that 
\begin{flalign}\label{t29}
\mathcal{E}_{0}(u_{0},B_{R}^{+})\le \liminf_{j\to \infty}\mathcal{E}_{j}(u_{j},B_{R}^{+}).
\end{flalign}
Since 
$$\mathcal{E}_{j}(u_{j},B_{R}^{+})=\left(\mathcal{E}_{j}(u_{j},B_{R}^{+})-\mathcal{E}_{0}(u_{j},B_{R}^{+})\right)+\mathcal{E}_{0}(u_{j},B_{R}^{+})$$
and, by weak lower semicontinuity and \eqref{t21} there holds that
\begin{flalign}\label{t231}
\mathcal{E}_{0}(u_{0},B_{R}^{+})\le \liminf_{j\to \infty}\mathcal{E}_{0}(u_{j},B_{R}^{+}),
\end{flalign}
we only need to show that
\begin{flalign}\label{t230}
\snr{\mathcal{E}_{j}(u_{j},B_{R}^{+})-\mathcal{E}_{0}(u_{j},B_{R}^{+})}\to 0,
\end{flalign}
which is a consequence of $\eqref{com1}_{3}$, $\eqref{com2}_{3}$, Lemma \ref{L0} (\emph{i.}) with $\varepsilon_{0}=\frac{\sigma}{2}$ and \eqref{t25}. In fact,
\begin{flalign*}
&\snr{\mathcal{E}_{j}(u_{j},B_{R}^{+})-\mathcal{E}_{0}(u_{j},B_{R}^{+})}\le \left| \ \int_{B_{R}^{+}}(k_{j}(x)-k_{0}(x))\snr{Du_{j}}^{p_{j}(x)} \ \dx \ \right|\nonumber \\
&\qquad + \left| \ \int_{B_{R}^{+}}k_{0}(x)\left[\snr{Du}^{p_{j}(x)}-\snr{Du_{j}}^{p_{0}}\right] \ \dx \ \right|\le \nr{k_{j}-k_{0}}_{L^{\infty}(B_{R}^{+})}\int_{B_{R}^{+}}\snr{Du}^{p_{j}(x)} \ \dx \nonumber \\
&\qquad +c\nr{p_{j}-p_{0}}_{L^{\infty}(B_{R}^{+})}\int_{B_{R}^{+}}(1+\snr{Du_{j}}^{2})^{\frac{q_{2}}{2}\left(1+\frac{\sigma}{2}\right)} \ \dx\nonumber \\
&\qquad \le c\left[\nr{k_{j}-k_{0}}_{L^{\infty}(B_{R}^{+})}+\nr{p_{j}-p_{0}}_{L^{\infty}(B_{R}^{+})}\right]\to 0.
\end{flalign*}
The constant $c$ appearing in the previous display depends only from $\texttt{data}_{p(\cdot)}$. Combining \eqref{t230} and \eqref{t231} we end up with \eqref{t29}. Now, let $\tilde{u}_{0}\in W^{1,p_{0}}(B_{R}^{+},\m)$ be a solution of the Dirichlet problem
\begin{flalign}\label{pd4}
\hat{\mathcal{C}}^{p_{0}}_{u_{0}}(B_{R}^{+},\m)\ni w\mapsto \min \mathcal{E}_{0}(w,B_{R}^{+}).
\end{flalign}
As in \cite{decv,dugrkr,harkinlin} we fix any $\theta\in (0,1)$, a cut-off function $\eta\in C^{1}_{c}(B_{R})$ satisfying
\begin{flalign}\label{t232}
\mathds{1}_{B_{(1-\theta)R}}\le \eta\le \mathds{1}_{B_{R}}\quad \mbox{and}\quad \snr{D\eta}\lesssim \frac{1}{R\theta},
\end{flalign}
and consider a bi-Lipschitz transformation $\Phi\colon \bar{B}_{R}^{+}\to \bar{B}_{R}$ so that 
\begin{flalign}\label{t233}
\left.\Phi\right|_{\partial^{+} B_{R}^{+}}=\mathds{Id}_{\partial^{+} B_{R}^{+}}\quad\mbox{and}\quad \Phi(\Gamma_{R})=\left\{x\in \partial B_{R}\colon x^{n}<0\right\}. 
\end{flalign}
Being $\Phi$ bi-Lipschitz, if $\mathcal{J}_{\Phi}$ is its jacobian, we have that
\begin{flalign}\label{t241}
0<c(n)^{-1}\le\snr{\mathcal{J}_{\Phi}(x)}\le c(n)<\infty.
\end{flalign}
Let us look at the function
\begin{flalign*}
\tilde{u}_{j}(x):=\tilde{u}_{0}(x)+\left(1-\eta(\Phi(x))\right)(u_{j}(x)-u_{0}(x))\quad \mbox{for} \ \ x\in B_{R}^{+}.
\end{flalign*}
By $\eqref{t232}_{1}$, \eqref{t233} and \eqref{t240} we see that
\begin{flalign}\label{t239}
B_{R}^{+}\cap \left\{0\le \eta(\Phi(x))<1\right\}\subseteq B_{R}^{+}\cap \Phi^{-1}(\bar{B}_{R}\setminus \bar{B}_{(1-\theta)R} ),
\end{flalign}
and since, 
\begin{flalign*}
\partial \left(B_{R}^{+}\cap \left\{0\le \eta(\Phi(x))<1\right\}\right)=\partial B_{R}^{+}\cup\partial\left\{\eta(\Phi(x))=1\right\},
\end{flalign*}
also that
\begin{flalign}\label{t237}
&\mbox{in a neighborhood of } \ \partial \left(B_{R}^{+}\cap \left\{0\le \eta(\Phi(x))<1\right\}\right), \ \tilde{u}_{j}  \ \mbox{takes values in} \ \m.
\end{flalign}
In particular, according to \eqref{t240} and to the definition given in \eqref{pd4},
\begin{flalign}
\texttt{tr}_{\Gamma_{R}}(\tilde{u}_{j})=\texttt{tr}_{\Gamma_{R}}(g_{j}),
\quad \texttt{tr}_{\partial^{+} B_{R}^{+}}\tilde{u}_{j}=\texttt{tr}_{\partial^{+} B_{R}^{+}}(u_{j}),\quad \texttt{tr}_{\partial\left\{\eta(\Phi(x))=1\right\}}=\texttt{tr}_{\partial\left\{\eta(\Phi(x))=1\right\}}(\tilde{u}_{0}). \label{t238}
\end{flalign}
Conditions \eqref{t237}-\eqref{t238} legalize the application of Lemma \ref{exlem} on the set $B_{R}^{+}\cap \left\{0\le \eta(\Phi(x))<1\right\}$ to end up with a function $\bar{w}_{j}\in W^{1,p_{j}(\cdot)}_{loc}(B_{1}^{+},\m)$ satisfying
\begin{flalign}\label{t235}
\begin{cases}
\ \bar{w}_{j}\left(\partial \left(B_{R}^{+}\cap \left\{0\le \eta(\Phi(x))<1\right\}\right)\right)\subset \m\\
\ \texttt{tr}_{\Gamma_{R}}(\bar{w}_{j})=\texttt{tr}_{\Gamma_{R}}(g_{j})\\
\ \texttt{tr}_{\partial^{+} B_{R}^{+}}(\bar{w}_{j})=\texttt{tr}_{\partial^{+} B_{R}^{+}}(u_{j})\\
\ \texttt{tr}_{\partial\left\{\eta(\Phi(x))=1\right\}}(\bar{w}_{j})=\texttt{tr}_{\partial\left\{\eta(\Phi(x))=1\right\}}(g_{0}),\\
\ \int_{B_{R}^{+}\cap \left\{0\le \eta(\Phi(x))<1\right\}}\snr{D\bar{w}_{j}}^{p_{j}(x)} \ \dx \lesssim \int_{B_{R}^{+}\cap \left\{0\le \eta(\Phi(x))<1\right\}}\snr{D\tilde{u}_{j}}^{p_{j}(x)} \ \dx
\end{cases}
\end{flalign}
with constants implicit in "$\lesssim$" depending by $(N,\m,\gamma_{2})$. Finally, define
\begin{flalign*}
\tilde{w}_{j}(x):=\begin{cases}
\ \tilde{u}_{0}(x)\quad &\mbox{if} \ \ x\in B_{R}^{+}\cap \left\{\eta(\Phi(x))=1\right\}\\
\ \bar{w}_{j}(x)\quad &\mbox{if} \ \ x\in B_{R}^{+}\cap \left\{0\le \eta(\Phi(x))<1\right\}.
\end{cases}
\end{flalign*}
Now, notice that the choices we made in \eqref{t24.a} and \eqref{tisi} imply that
\begin{flalign}\label{t24.b}
\frac{q_{2}}{p_{0}}\left(1+\frac{\tilde{\sigma}}{2}\right)=1+\left[\frac{q_{2}-p_{0}}{p_{0}}+\frac{q_{2}\tilde{\sigma}}{2p_{0}}\right]<1+\frac{\delta_{g}}{8},
\end{flalign}
so by Lemma \ref{gehp}, \eqref{t24.b}, \eqref{t243} and the minimality of $\tilde{u}_{0}$ in class $\hat{\mathcal{C}}^{p_{0}}_{u_{0}}(B_{R}^{+},\m)$, we get
\begin{flalign}\label{t242}
\int_{B_{R}^{+}\cap\{0\le \eta(\Phi(x))<1\}}&\snr{D\tilde{u}_{0}}^{p_{j}(x)} \ \dx \le \snr{B_{R}^{+}\cap\{0\le \eta(\Phi(x))<1\}}\nonumber \\
&+\int_{B_{R}^{+}}\snr{D\tilde{u}_{0}}^{q_{2}\left(1+\frac{\tilde{\sigma}}{2}\right)} \ \dx\nonumber \\
\le & \snr{B_{R}^{+}\cap\{0\le \eta(\Phi(x))<1\}}+c\int_{B_{R}^{+}}\snr{Du_{0}}^{q_{2}\left(1+\frac{\tilde{\sigma}}{2}\right)} \ \dx<\infty,
\end{flalign}
for $c\equiv c(\texttt{data}_{p(\cdot)})$. In \eqref{t242} we used, in particular, that
\begin{flalign}\label{t242a}
\int_{B_{R}^{+}}\snr{D\tilde{u}_{0}}^{q_{2}\left(1+\frac{\tilde{\sigma}}{2}\right)} \ \dx\le c\int_{B_{R}^{+}}\snr{Du_{0}}^{q_{2}\left(1+\frac{\tilde{\sigma}}{2}\right)} \ \dx.
\end{flalign}
Via \eqref{t241}, \eqref{t239} and a straightforward change of variables we have
\begin{flalign}\label{t236}
&\left| \ B_{R}^{+}\cap \left\{0\le \eta(\Phi(x))<1\right\} \ \right|=\int_{B_{R}^{+}}\mathds{1}_{\{0\le \eta(\Phi(x))<1\}} \ \dx\nonumber \\
&\qquad \le \int_{B_{R}^{+}\cap \{\Phi^{-1}(\bar{B}_{R}\setminus B_{(1-\theta)R})\}} \dx\le \int_{B_{R}\setminus \bar{B}_{(1-\theta)R}}\snr{\mathcal{J}_{\Phi}(x)}^{-1} \ \dx\nonumber \\
&\qquad \le c(n)\snr{\bar{B}_{R}\setminus \bar{B}_{(1-\theta)R}}\to 0\quad \mbox{as} \ \ \theta\to 0.
\end{flalign}
We then estimate
\begin{flalign}\label{t246}
\mathcal{E}_{j}(u_{j},B_{R}^{+})\le& \mathcal{E}_{j}(\tilde{w}_{j},B_{R}^{+})\nonumber \\
\le &\mathcal{E}_{j}(\tilde{w}_{j},B_{R}^{+}\cap\{0\le\eta(\Phi(x))<1\})+\mathcal{E}_{j}(\tilde{u}_{0},B_{R}^{+}\cap\{\eta(\Phi(x))=1\})\nonumber \\
=:&\mbox{(I)}_{j}+\mbox{(II)}_{j}.
\end{flalign}
In the previous display, we used that, in view of $\eqref{t235}_{2,3}$, $\tilde{w}_{j}$ is a legitimate comparison map to $u_{j}$.
The bounds in \eqref{t242}, $\eqref{t235}_{4}$ and \eqref{t236} then legalize the following estimate:
\begin{flalign*}
\mbox{(I)}_{j}\le &c\int_{B_{R}^{+}\cap\{0\le \eta(\Phi(x))<1\}}\left[\snr{D\tilde{u}_{0}}^{p_{j}(x)}+\snr{Du_{j}-Du_{0}}^{p_{j}(x)}+\left| \ \frac{u_{j}-u_{0}}{R\theta} \ \right|^{p_{j}(x)}\right] \ \dx \nonumber \\
\le&c\int_{B_{R}^{+}\cap\{0\le \eta(\Phi(x))<1\}}\snr{D\tilde{u}_{0}}^{p_{j}(x)} \ \dx+c\int_{B_{R}^{+}\cap\{0\le \eta(\Phi(x))<1\}}\left[\snr{Du_{j}}^{p_{j}(x)}+\snr{Du_{0}}^{p_{j}(x)}\right] \ \dx\nonumber \\
&+c\int_{B_{R}^{+}\cap\{0\le \eta(\Phi(x))<1\}}\left| \ \frac{u_{j}-u_{0}}{R\theta} \ \right|^{p_{j}(x)} \ \dx=:c\left[\mbox{(I)}_{j}^{1}+\mbox{(I)}_{j}^{2}+\mbox{(I)}_{j}^{3}\right]
\end{flalign*}
where $c\equiv c(N,\m,\gamma_{1},\gamma_{2})$. Let us bound the three terms appearing on the right-hand side of the above inequality. By Lemma \ref{L0} (\emph{i.}) with $\varepsilon_{0}=\frac{\tilde{\sigma}}{2}$, \eqref{t242a}, \eqref{t24}, \eqref{bougehp}, \eqref{t236}, $\eqref{com2}_{3}$ and the absolute continuity of Lebesgue's integral we have
\begin{flalign*}
\mbox{(I)}_{j}^{1}\le &c\int_{B_{R}^{+}\cap\{0\le \eta(\Phi(x))<1\}}\left[\snr{D\tilde{u}_{0}}^{p_{j}(x)}-\snr{D\tilde{u}_{0}}^{p_{0}(x)}\right] \ \dx +c\int_{B_{R}^{+}\cap\{0\le \eta(\Phi(x))<1\}}\snr{D\tilde{u}_{0}}^{p_{0}} \ \dx\nonumber \\
\le &c\nr{p_{j}-p_{0}}_{L^{\infty}(B_{1}^{+})}\int_{B_{R}^{+}\cap \{0\le \eta(\Phi(x))<1\}}\snr{D\tilde{u}_{0}}^{q_{2}\left(1+\frac{\tilde{\sigma}}{2}\right)} \ \dx+o(\theta)\nonumber \\
\le &c\nr{p_{j}-p_{0}}_{L^{\infty}(B_{1}^{+})}\int_{B_{R}^{+}}\snr{Du_{0}}^{q_{2}\left(1+\frac{\tilde{\sigma}}{2}\right)} \ \dx+o(\theta)=o(j)+o(\theta),
\end{flalign*}
with $c\equiv c(\texttt{data}_{p(\cdot)})$. By \eqref{t24}, \eqref{t243}, \eqref{t244}, \eqref{t236} we get that 
\begin{flalign*}
\mbox{(I)}_{j}^{2}=o(\theta).
\end{flalign*}
Moreover, using \eqref{t245}, H\"older inequality and \eqref{t236} we have
\begin{flalign*}
\mbox{(I)}_{j}^{3}\le& \snr{B_{R}^{+}\cap\{0\le \eta(\Phi(x))<1\}}+\int_{B_{R}^{+}\cap\{0\le \eta(\Phi(x))<1\}}\left| \ \frac{u_{j}-u_{0}}{r\theta} \ \right|^{q_{2}} \ \dx\nonumber \\
\le &o(\theta)+(R\theta)^{-q_{2}}\snr{B_{R}^{+}\cap\{0\le \eta(\Phi(x))<1\}}^{1-\frac{q_{2}}{q_{2}\left(1+\frac{\tilde{\sigma}}{2}\right)}}\nr{u_{j}-u_{0}}^{q_{2}}_{L^{q_{2}\left(1+\frac{\tilde{\sigma}}{2}\right)(B_{R}^{+})}}\nonumber \\
\le &o(\theta)+(R\theta)^{-q_{2}}o(j),
\end{flalign*}
and, trivially,
\begin{flalign*}
\mbox{(II)}_{j}\le \mathcal{E}_{j}(\tilde{u}_{0},B_{R}^{+}).
\end{flalign*}
Finally, by $\eqref{com1}_{3}$, $\eqref{com2}_{3}$, \eqref{t242} and \eqref{t242a} we get
\begin{flalign*}
&\snr{\mathcal{E}_{j}(\tilde{u}_{0},B_{R}^{+})-\mathcal{E}_{0}(\tilde{u}_{0},B_{R}^{+})}\le \left[\nr{k_{j}-k_{0}}_{L^{\infty}(B_{1}^{+})}+\nr{p_{j}-p_{0}}_{L^{\infty}(B_{1}^{+})}\right]\left(1+\int_{B_{R}^{+}}\snr{Du_{0}}^{q_{2}\left(1+\frac{\tilde{\sigma}}{2}\right)} \ \dx\right)\nonumber \\
&\qquad\le c(\texttt{data}_{p(\cdot)})\left[\nr{k_{j}-k_{0}}_{L^{\infty}(B_{1}^{+})}+\nr{p_{j}-p_{0}}_{L^{\infty}(B_{1}^{+})}\right]=o(j).
\end{flalign*}
Plugging the content of all the previous estimates in \eqref{t246} we end up with
\begin{flalign*}
\mathcal{E}_{j}(u_{j},B_{R}^{+})\le \mathcal{E}_{0}(\tilde{u}_{0},B_{R}^{+})+o(j)+o(\theta)+(R\theta)^{-q_{2}}o(j).
\end{flalign*}
By \eqref{t29} we can take the liminf as $j\to \infty$ in the above display to obtain
\begin{flalign}\label{t.243}
\mathcal{E}_{0}(u_{0},B_{R}^{+})\le& \liminf_{j\to \infty}\mathcal{E}_{j}(u_{j},B_{R}^{+})\nonumber \\
\le &\limsup_{j\to \infty}\left[\mathcal{E}_{0}(\tilde{u}_{0},B_{R}^{+})+o(j)+o(\theta)+(R\theta)^{-q_{2}}o(j)\right]\nonumber \\
\le &\mathcal{E}_{0}(\tilde{u}_{0},B_{R}^{+})+o(\theta).
\end{flalign}
Sending $\theta \to 0$ in \eqref{t.243} and using the minimality of $\tilde{u}_{0}$ in class $\hat{\mathcal{C}}_{u_{0}}^{p_{0}}(B_{R}^{+},\m)$, we end up with
\begin{flalign*}
\mathcal{E}_{0}(u_{0},B_{R}^{+})\le \mathcal{E}_{0}(\tilde{u}_{0},B_{R}^{+})\le \mathcal{E}_{0}(w,B_{R}^{+})
\end{flalign*}
for all $w\in \hat{\mathcal{C}}_{u_{0}}^{p_{0}}(B_{R}^{+},\m)$, so, by Definition \ref{D1} and \eqref{t240}, the minimality of $u_{0}$ in class $\mathcal{C}^{p_{0}}_{g_{0}}(B_{R}^{+},\m)$ is proved. Finally, combining \eqref{t.243} with the minimality of $\tilde{u}_{0}$ in class $\hat{\mathcal{C}}_{u_{0}}^{p_{0}}(B_{R}^{+},\m)$, we can conclude that $\mathcal{E}_{j}(u_{j},B_{R}^{+})\to \mathcal{E}_{0}(u_{0},B_{R}^{+})$. 
\subsubsection*{Step 3. Singular points.} Let $\{x_{j}\}\subset \bar{B}_{1}^{+}$ be the sequence of singular points in the statement. The interior case $x_{0}\in B_{1}^{+}$ has already been analyzed in \cite[Section 4.1]{decv}, so we can assume that $x_{0}\in \Gamma_{1}$. Up to choose $j\in \N$ sufficiently large and then relabel, we can also suppose that $\{x_{j}\}\subset B_{R}^{+}$ for some $R\in \left(0,\frac{R_{*}}{2}\right)$, $x_{0}\in \Gamma_{R}$ and \eqref{t24}-\eqref{t24.a} are in force. By Theorem \ref{t1} and \eqref{t23}, we can find a radius $\tilde{R}>0$ and a positive constant $\tilde{\varepsilon}$, both independent on $j\in \N$ so that if $x_{j}$ is a singular point of $u_{j}$, then
\begin{flalign}\label{t.245}
\left(\rr^{p_{2,j}(\rr)-n}\int_{B_{\rr}^{+}(x_{j})}(1+\snr{Du_{j}}^{2})^{\frac{p_{2,j}(\rr)}{2}}\ \dx\right)^{\frac{1}{p_{2,j}(\rr)}}>\tilde{\varepsilon}>0 
\end{flalign}
for all $\rr\in \left(0,\frac{1}{2}\min\left\{\tilde{R},R_{*}-R\right\}\right)$, with $R_{*}$ as in \eqref{r*}. In the above display, we denoted $p_{2,j}(\rr):=\sup_{x\in B_{\rr}(x_{j})\cap B_{R}^{+}}p_{j}(x)$. Set $\sigma':=\min\left\{\tilde{\sigma},\frac{\alpha}{\gamma_{2}}\right\}$. By Lemma \ref{L0} \emph{(i.)} with $\varepsilon_{0}=\frac{\sigma'}{2}$ and $\eqref{ca1}_{2}$, we estimate
\begin{flalign}\label{t.244}
&\left|\ \rr^{p_{2,j}(\rr)-n}\int_{B_{\rr}^{+}(x_{j})}\left[(1+\snr{Du_{j}}^{2})^{\frac{p_{2,j}(\rr)}{2}}-(1+\snr{Du_{j}}^{2})^{\frac{p_{j}(x)}{2}}\right] \ \dx\ \right|^{\frac{1}{p_{2,j}(\rr)}}\nonumber \\
&\qquad \le c\rr^{1+\frac{\alpha}{\gamma_{1}}}\left(\mint_{B_{\rr}^{+}(x_{j})}(1+\snr{Du_{j}}^{2})^{\frac{p_{2,j}(\rr)}{2}\left(1+\frac{\sigma'}{2}\right)} \ \dx\right)^{\frac{1}{p_{2,j}(\rr)}}\le c\rr^{-\frac{\sigma'}{2}+\frac{\alpha}{\gamma_{2}}}\to 0,
\end{flalign}
for $c\equiv c(n,N, \m,\gamma_{1},\gamma_{2},q)$. By \eqref{t.245}, \eqref{t.244}, \eqref{t23} and \eqref{caccine} we then get
\begin{flalign}\label{t.246}
\tilde{\varepsilon}<&c\rr^{-\frac{\sigma'}{2}+\frac{\alpha}{\gamma_{2}}}+c\left(\rr^{p_{2,j}(\rr)-n}\int_{B_{\rr}^{+}(x_{j})}(1+\snr{Du_{j}}^{2})^{\frac{p_{j}(x)}{2}} \ \dx\right)^{\frac{1}{p_{2,j}(\rr)}}\nonumber \\
\le&c\rr^{-\frac{\sigma'}{2}+\frac{\alpha}{\gamma_{2}}}+c\rr+c\rr^{1-\frac{n}{p_{2,j}(\rr)}}\left[\int_{B_{2\rr}^{+}(x_{j})}\left|\frac{u_{j}-g_{j}}{\rr} \ \right|^{p_{j}(x)} \ \dx+\int_{B_{2\rr}^{+}(x_{j})}\snr{Dg_{j}}^{p_{j}(x)} \ \dx\right]^{\frac{1}{p_{2,j}(\rr)}}\nonumber \\
\le &c\rr^{-\frac{\sigma'}{2}+\frac{\alpha}{\gamma_{2}}}+c\rr+c\rr^{1-\frac{n}{q}}\left(\int_{B_{\rr}^{+}(x_{j})}\snr{Dg_{j}}^{q} \ \dx\right)^{\frac{1}{q}}\nonumber \\
&+c\rr^{1-\frac{n}{p_{2,j}(\rr)}}\left[\int_{B_{2\rr}^{+}(x_{j})}\left|\frac{u_{j}-u_{0}}{\rr} \ \right|^{q_{2}\left(1+\frac{\tilde{\sigma}}{2}\right)} \ \dx+\int_{B_{2\rr}^{+}(x_{j})}\left|\frac{g_{j}-u_{0}}{\rr} \ \right|^{p_{j}(x)} \ \dx\right]^{\frac{1}{p_{2,j}(\rr)}}\nonumber \\
\le &c\rr^{\sigma''}+c\left[\mint_{B_{2\rr}^{+}(x_{j})}\snr{u_{j}-u_{0}}^{q_{2}\left(1+\frac{\tilde{\sigma}}{2}\right)} \ \dx+\mint_{B_{2\rr}^{+}(x_{j})}\snr{g_{j}-u_{0}}^{p_{j}(x)} \ \dx\right]^{\frac{1}{p_{2,j}(\rr)}},
\end{flalign}
where we set $\sigma'':=\min\left\{1-\frac{n}{q},\frac{\alpha}{\gamma_{2}}-\frac{\sigma'}{2}\right\}$ and $c\equiv c(n,N,\m,\gamma_{1},\gamma_{2},q)$. By \eqref{t245} we get
\begin{flalign}\label{t.247}
\mint_{B_{2\rr}^{+}(x_{j})}\snr{u_{j}-u_{0}}^{q_{2}\left(1+\frac{\tilde{\sigma}}{2}\right)} \ \dx\to 0\quad \mbox{as} \ \ j\to \infty.
\end{flalign}
Since $g_{j}\rightharpoonup g_{0}$ weakly in $W^{1,q}(\bar{B}_{1}^{+},\m)$, then by Rellich-Kondrachov theorem there holds that, up to subsequences, $g_{j}\to g_{0}$ strongly in $L^{q}(\bar{B}_{1}^{+},\m)$ and pointwise a.e., therefore, keeping also $\eqref{com2}_{3}$ in mind, we can apply the dominated convergence theorem to end up with
\begin{flalign}\label{t.248}
\mint_{B_{2\rr}^{+}(x_{j})}\snr{g_{j}-u_{0}}^{p_{j}(x)} \ \dx \to \mint_{B_{2\rr}^{+}(x_{j})}\snr{g_{0}-u_{0}}^{p_{0}} \ \dx \quad \mbox{as} \ \ j\to \infty.
\end{flalign}
By $\eqref{com2}_{3}$, \eqref{t.247} and \eqref{t.248} we can take the limit superior on both sides of the inequality in \eqref{t.246} to obtain
\begin{flalign}\label{t248}
\tilde{\varepsilon}\le c\rr^{\sigma''}+c\left(\mint_{B_{2\rr}^{+}(x_{j})}\snr{g_{0}-u_{0}}^{p_{0}} \ \dx\right)^{\frac{1}{p_{0}}}.
\end{flalign}
We finally pass to the limit superior for $\rr\to \infty$ in \eqref{t248} and have
\begin{flalign*}
0<\bar{\varepsilon}^{p_{0}}\le \limsup_{\rr\to 0}\mint_{B_{2\rr}^{+}(x_{j})}\snr{u_{0}-g_{0}}^{p_{0}} \ \dx,
\end{flalign*}
meaning that $x_{0}$ is a singular point for $u_{0}$. In the previous display, we set $\bar{\varepsilon}:=\tilde{\varepsilon}/c$.
\end{proof}
The next lemma is a monotonicity formula in the spirit of \cite{decv,fu3,shouhlb,tac}.
\begin{lemma}\label{mono}
Under assumptions \eqref{ask}, $\eqref{aspinf}$, \eqref{m} and \eqref{g}, let $u\in W^{1,p(\cdot)}(B_{1}^{+},\m)$ be a solution of problem \eqref{pd1}. Suppose also that
\begin{flalign}\label{k01}
k(0)=1.
\end{flalign}
Then, there exist $\Upsilon\equiv \Upsilon(n,N,\m,\gamma_{1},\gamma_{2},q)\in (0,1]$ and a threshold $T\equiv T(\texttt{data},\kappa)\in (0,1]$ such that if
\begin{flalign}\label{monoreg}
[g]_{0, 1-\frac{n}{q};\bar{B}_{1}^{+}}<\Upsilon,
\end{flalign}
then for all $\kappa \in \left(0,1-\frac{n}{q}\right]$, the map $\Phi\colon \left(0,\frac{T}{4}\right)\to [0,\infty)$ defined as
\begin{flalign}\label{monophi}
 \Phi(\tau):=\exp\left(\frac{\tilde{c}}{\beta''}\tau^{\beta''}\right)\left[\tau^{p_{2}(\tau)-n}\int_{B_{\tau}^{+}}k(x)\snr{D\tilde{u}}^{p_{2}(\tau)} \ \dx+c\frac{\tau^{\kappa}}{\kappa}\right],
\end{flalign}
with $\tilde{u}$ as in \eqref{tiu}, $\beta''\equiv \beta''(n,q,\nu)$ and $c,\tilde{c}\equiv c,\tilde{c}(\texttt{data},\nr{Dg}_{L^{q}(B_{1}^{+})},\kappa)$, is monotone non-decreasing. Moreover, the following inequality holds true
\begin{flalign}\label{monophi+}
\int_{\partial B_{1}^{+}}&\snr{u(Rx)-u(\rr x)}^{p_{2}(\rr)} \ \d\mathcal{H}^{n-1}(x)\nonumber \\
\le& c\log(R/\rr)\left[\rr^{p_{2}(\rr)-p_{2}(R)}\left(\Phi(R)-\Phi(\rr)\right)\right]+c(R-\rr)^{\gamma_{1}\left(1-\frac{n}{q}\right)},
\end{flalign}
for $c\equiv c(\texttt{data},\nr{Dg}_{L^{q}(B_{1}^{+})},\kappa)$.
\end{lemma}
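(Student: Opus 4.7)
I would follow the classical monotonicity scheme for manifold-constrained $p$-minimizers (as in \cite{shouhlb,fu3,decv,tac}), adapted to the three novel features here: the variable exponent $p(\cdot)$, the coefficient $k(\cdot)$, and the flat Dirichlet boundary $\Gamma_\tau$ carrying non-constant data $g$. The plan is to fix $\tau\in(0,T/4)$ with $T\equiv T(\texttt{data},\kappa)$ a threshold to be determined, freeze the exponent to $p:=p_{2}(\tau)$, and establish a Gr\"onwall-type differential inequality
\[
F'(\tau)\ge -\tilde c\,\tau^{\beta''-1}F(\tau)-c\,\tau^{\kappa-1},\qquad F(\tau):=\tau^{p_{2}(\tau)-n}\int_{B_{\tau}^{+}}k(x)\snr{D\tilde{u}}^{p_{2}(\tau)}\,\dx,
\]
so that multiplication by the integrating factor $\exp(\tilde c\,\tau^{\beta''}/\beta'')$ together with the antiderivative $c\,\tau^\kappa/\kappa$ promote $\Phi$ to a non-decreasing function.

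The pivotal construction is a comparison competitor on $B_\tau^+$ realising both boundary conditions simultaneously. For $y\in B_\tau^+$ I would set
\[
v(y):=\Pi_{\m}\bigl(g(y)+(u-g)(\tau y/\snr{y})\bigr),
\]
which, since $\texttt{tr}_{\Gamma_1}(u-g)=0$, gives $v=u$ on $\partial^{+}B_{\tau}^{+}$ and $v=g$ on $\Gamma_\tau$. The projection $\Pi_\m$ is well-defined whenever its argument lies in the tubular neighborhood of $\m$ introduced before Lemma \ref{hoplem}, and a direct H\"older estimate
\[
\dist\bigl(g(y)+(u-g)(\tau y/\snr{y}),\m\bigr)\le [g]_{0,1-n/q;\bar B_1^+}\,\tau^{1-n/q}
\]
combined with \eqref{monoreg} and $\tau\le T$ small enough keeps this strictly below the tube radius. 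If any residual region remains where $v$ may exit $\m$, Lemma \ref{exlem} patches $v$ into $W^{1,p(\cdot)}(B_\tau^+,\m)$ with $v-u\in W^{1,p(\cdot)}_0(B_\tau^+,\erN)$, so $v$ is admissible against $u$.

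Writing $y=s\,\eta$ with $s\in(0,\tau)$ and $\eta\in\partial B_1^+$, the radial derivative of the argument of $\Pi_\m$ is controlled by $\snr{Dg}$ while the tangential gradient is the $(s/\tau)$-rescaling of $D_T(u-g)$ at $\tau\eta$. Integration in $s$ and the Lipschitz bound of $\Pi_\m$ yield
\[
\int_{B_\tau^+}k\snr{Dv}^p\,\dx \le \frac{c\,\tau}{n+p}\int_{\partial^+ B_\tau^+}k\snr{D_T u}^p\,d\mathcal{H}^{n-1}+\mathrm{err}(\tau),
\]
where $\mathrm{err}(\tau)$ collects: a $\snr{Dg}^q$-contribution of order $\tau^\kappa$ via H\"older; an oscillation-of-$p(\cdot)$ contribution of order $\tau^\alpha$ via Lemma \ref{L0}\emph{(i)} together with the higher-integrability margin from Lemma \ref{geh}; and an oscillation-of-$k(\cdot)$ contribution of order $\tau^\nu$. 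Using minimality $\mathcal{E}(u,B_\tau^+)\le\mathcal{E}(v,B_\tau^+)$ and restoring the frozen exponent $p_2(\tau)$ everywhere by Lemma \ref{L0}\emph{(i)} at a further $\tau^\alpha$-cost delivers the claimed differential inequality with $\beta''$ a suitable fraction of $\min\{\alpha,\nu,\gamma_1(1-n/q)\}$.

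For \eqref{monophi+}, the radial-energy deficit extracted when passing from $\snr{D\tilde u}^p$ to $\snr{D_T\tilde u}^p$ is combined with
\[
u(Rx)-u(\rr x)=\int_\rr^R\partial_s u(sx)\,ds,\qquad x\in\partial B_1^+,
\]
H\"older in the radial variable (with weight $s^{-1}$ producing the $\log(R/\rr)$ factor) and integration over $\partial B_1^+$; the scaling prefactor $\rr^{p_2(\rr)-p_2(R)}$ arises because the exponent is frozen at two different scales, while the $(R-\rr)^{\gamma_1(1-n/q)}$ addend absorbs the $c\,\tau^\kappa/\kappa$ term via the direct $q$-integrability of $Dg$. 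The main obstacle is the coupled task of controlling the $\Pi_\m$-projection (dependent on the smallness $\Upsilon$) and tuning $\beta''$ so that the $p(\cdot)$- and $k(\cdot)$-oscillations, the Gehring margin from Lemma \ref{geh}, and the $g$-contribution all close into a single positive power of $\tau$; a too-small $\beta''$ would break the Gr\"onwall step and destroy the monotonicity, so the thresholds $\Upsilon$ and $T$ must be chosen to leave room for every error term.
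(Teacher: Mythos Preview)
Your plan matches the paper's proof almost step for step: the competitor $v(y)=\Pi_{\m}\bigl(g(y)+(u-g)(\tau y/\snr{y})\bigr)$ is exactly the paper's $\bar w_\tau=\Pi_\m(\tilde u(x_\tau)+g(x))$, the smallness of $\Upsilon$ is used for the same reason, and the Gr\"onwall-to-integrating-factor passage and the radial H\"older argument for \eqref{monophi+} are both as in the paper. Two structural remarks and one genuine technical gap:

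\emph{Structural.} The paper interposes a frozen-exponent minimizer $v\in\hat{\mathcal C}^{p_2(\tau)}_u(B_\tau^+,\m)$ before the geometric competitor; this cleanly separates the exponent-freezing error (bounded by Lemma~\ref{geh} for $u$ and Lemma~\ref{gehp} for $v$) from the minimality comparison. Your direct route $\mathcal E(u)\le\mathcal E(\bar w_\tau)$ works too, but then the freezing error on the competitor side is most cheaply handled by the trivial bound $\snr{D\bar w_\tau}^{p(x)}\le 1+\snr{D\bar w_\tau}^{p_2(\tau)}$ rather than Lemma~\ref{L0}\emph{(i)}. Also, the correct homogeneous-extension identity gives the prefactor $\tau/(n-p_2(\tau))$, not $\tau/(n+p)$; and the passage from $(\snr{D\tilde u}^2-\snr{\partial_r\tilde u}^2)^{p_2(\tau)/2}$ to $\snr{D\tilde u}^{p_2(\tau)}-\snr{\partial_r\tilde u}^{p_2(\tau)}$ needs $p_2(\tau)\ge 2$, which is precisely why \eqref{aspinf} (and not merely \eqref{asp}) is assumed here.

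\emph{Gap.} Your stated inequality $F'(\tau)\ge -\tilde c\,\tau^{\beta''-1}F(\tau)-c\,\tau^{\kappa-1}$ hides the most delicate part of the computation. Differentiating $F(\tau)=\tau^{p_2(\tau)-n}\int_{B_\tau^+}k\snr{D\tilde u}^{p_2(\tau)}\,\dx$ produces, besides the scaling and flux terms, two contributions carrying $p_2'(\tau)$: one with $\log\tau$ (from $\tau^{p_2(\tau)}$) and one with $\log\snr{D\tilde u}$ (from $\snr{D\tilde u}^{p_2(\tau)}$). The first is absorbed into $\tilde c\,\tau^{\beta''-1}F(\tau)$ after the elementary bound $\snr{\log\tau}\le c(\beta'')\tau^{\beta''-1}$. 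The second cannot be bounded by $F(\tau)$ and requires the a priori estimate $\eqref{ca1}_2$ from Corollary~\ref{gehc} (i.e.\ $\tau^{p_2(\tau)}\mint_{B_\tau^+}\snr{Du}^{p_2(\tau)(1+\sigma)}\,\dx\le c$), yielding an additive error of order $\tau^{\kappa-1}$. This is exactly where the Lipschitz continuity of $p(\cdot)$ and the Gehring margin are simultaneously spent, and it is the reason the remainder in $\Phi$ is $c\tau^\kappa/\kappa$ rather than something multiplicative. Without this step the differential inequality does not close.
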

\begin{proof}
Let $u\in W^{1,p(\cdot)}(B_{1}^{+},\m)$ be a solution of problem \eqref{pd1}, $\kappa\in (0,1)$ be a fixed constant and select $T\in (0,1]$ so that
\begin{flalign*}
0<T\le \min\left\{R_{*},\frac{1-\kappa}{16[p]_{0,1}},\left(\frac{\lambda}{4[k]_{0,\nu}}\right)^{\frac{2}{\nu}}\right\},
\end{flalign*}
where $R_{*}$ is as in \eqref{r*}. Such a position assures that, whenever $\tau\in \left(0,\frac{T}{4}\right]$, \eqref{t11}-\eqref{t13} hold with $R$ replaced by $\tau$, moreover,
\begin{flalign}\label{mono5}
p_{2}(4\tau)-p_{1}(\tau)\le \frac{1-\kappa}{2}\quad \mbox{and}\quad 4[k]_{0,\nu}\tau^{\frac{\nu}{2}}\le \lambda,
\end{flalign}
with $\nu$ as in $\eqref{ask}_{1}$. For $\tau\in \left(0,\frac{T}{4}\right]$, we introduce the functional
\begin{flalign*}
W^{1,p_{2}(\tau)}(B_{\tau}^{+},\m)\ni w\mapsto \mathcal{E}_{\tau}(w,B_{\tau}^{+}):= \int_{B_{\tau}^{+}}k(x)\snr{Dw}^{p_{2}(\tau)} \ \dx
\end{flalign*}
and let $v\in W^{1,p_{2}(\tau)}(B_{\tau}^{+},\m)$ be a solution of problem
\begin{flalign}\label{monopd}
\hat{\mathcal{C}}_{u}^{p_{2}(\tau)}(B_{\tau}^{+},\m)\ni w \mapsto \min \mathcal{E}_{\tau}(w,B_{\tau}^{+}).
\end{flalign}
By the minimality of $v$ in class $\hat{\mathcal{C}}_{u}^{p_{2}(\tau)}(B_{\tau}^{+},\m)$ and that of $u$ in class $\mathcal{C}^{p(\cdot)}_{g}(B_{1}^{+},\m)$ we bound
\begin{flalign*}
&\snr{\mathcal{E}_{\tau}(u,B_{\tau}^{+})-\mathcal{E}_{\tau}(v,B_{\tau}^{+})}=\mathcal{E}_{\tau}(u,B_{\tau}^{+})-\mathcal{E}_{\tau}(v,B_{\tau}^{+})\nonumber \\
&\qquad \le \snr{\mathcal{E}_{\tau}(u,B_{\tau}^{+})-\mathcal{E}(u,B_{\tau}^{+})}+\snr{\mathcal{E}_{\tau}(v,B_{\tau}^{+})-\mathcal{E}(v,B_{\tau}^{+})}=:\mbox{(I)}+\mbox{(II)}.
\end{flalign*}
Let 
\begin{flalign}\label{s''}
\sigma'':=\frac{1}{4}\min\left\{\sigma_{g},\delta_{g},\frac{n-\gamma_{2}}{\gamma_{2}},\frac{1-\kappa}{2\gamma_{2}}\right\},
\end{flalign}
where $\sigma_{g}$ and $\delta_{g}$ are the higher integrability threshold from Lemmas \ref{geh}-\ref{gehp} respectively. Combining $\eqref{asp}_{1}$, $\eqref{ask}_{1}$, Lemma \ref{geh}, Lemma \ref{L0} (\emph{i.}) with $\varepsilon_{0}=\sigma''$ and $\eqref{ca1}_{2}$ we end up with
\begin{flalign*}
\mbox{(I)}\le c\tau\int_{B_{\tau}^{+}}(1+\snr{Du}^{2})^{\frac{p_{2}(\tau)(1+\sigma'')}{2}} \ \dx\le c\tau^{1+n-p_{2}(4\tau)(1+\sigma'')},
\end{flalign*}
with $c\equiv c(\texttt{data}_{p(\cdot)},\nr{Dg}_{L^{q}(B_{1}^{+})})$. In a totally similar way, using this time Lemma \ref{gehp}, $\eqref{ca1}_{2}$ and Lemma \ref{L0} (\emph{ii.}) with $\varepsilon_{0}=\sigma''$ we get
\begin{flalign*}
\mbox{(II)}\le& c\tau \int_{B_{\tau}^{+}}(1+\snr{Dv}^{2})^{\frac{p_{2}(\tau)(1+\sigma'')}{2}} \ \dx\nonumber \\
\le &c\tau\int_{B_{\tau}^{+}}(1+\snr{Dv}^{2})^{\frac{p_{2}(\tau)(1+\sigma'')}{2}} \ \dx\le c\tau^{1+n-p_{2}(4\tau)(1+\sigma'')},
\end{flalign*}
for $c\equiv c(\texttt{data}_{p(\cdot)},\nr{Dg}_{L^{q}(B_{1}^{+})})$. Merging the content of the previous displays we obtain
\begin{flalign}\label{mono6}
\mathcal{E}_{\tau}(u,B_{\tau}^{+})\le \mathcal{E}_{\tau}(v,B_{\tau}^{+})+c\tau^{1+n-p_{2}(4\tau)(1+\sigma'')}.
\end{flalign}
Now, for $\tau$ as above, define $x_{\tau}:=\tau\frac{x}{\snr{x}}$. As in \cite[Lemma 1.3]{shouhlb} we consider the following comparison map
\begin{flalign*}
w_{\tau}(x):=\begin{cases}
\ u(x)\quad &\mbox{if} \ \ x\in B_{1}^{+}\setminus B_{\tau}^{+}\\
\ \tilde{u}(x_{\tau})+g(x)\quad &\mbox{if} \ \ x\in B_{\tau}^{+},
\end{cases}
\end{flalign*}
where $\tilde{u}$ is defined in \eqref{tiu}. Notice that, by \eqref{t12}-\eqref{t13} there holds that
\begin{flalign}\label{mono1}
w_{\tau}\in u+ W^{1,p(\cdot)}_{0}(B_{\tau}^{+},\RN)\quad \mbox{and}\quad w_{\tau}\in W^{1,p_{2}(\tau)}(B_{\tau}^{+},\RN).
\end{flalign}
Moreover, since \eqref{proreg} and \eqref{monoreg} are in force, we see that
\begin{flalign*}
\dist(w_{\tau},\m)\le c(n,q,\beta_{0})\Upsilon \tau^{\beta_{0}}\quad \mbox{with} \ \  \beta_{0}:=1-\frac{n}{q},
\end{flalign*}
therefore, choosing $\Upsilon$ small enough, and thus determining the dependency $\Upsilon\equiv \Upsilon(n,N,\m,\gamma_{1},\gamma_{2},q)$, we can project $w_{\tau}$ onto $\m$ thus obtaining a map $\bar{w}_{\tau}:=\Pi_{\m}(w_{\tau})$ satisfying
\begin{flalign}\label{mono2}
\bar{w}_{\tau}\in \hat{\mathcal{C}}_{u}^{p_{2}(\tau)}(B_{\tau}^{+},\m)\quad \mbox{and}\quad \int_{B_{\tau}^{+}}\snr{D\bar{w}_{\tau}}^{p_{2}(\tau)} \ \dx \le (1+c\Upsilon\tau^{\beta_{0}})\int_{B_{\tau}^{+}}\snr{Dw_{\tau}}^{p_{2}(\tau)} \ \dx,
\end{flalign}
for $c\equiv c(n,N,\m,\gamma_{1},\gamma_{2},q)$. Notice that, by the mean value theorem applied to the function $[0,\infty)\ni s\mapsto (t+s)^{p_{2}(\tau)}$ there holds that
\begin{flalign}\label{mean}
\left(\snr{D\tilde{u}}+\snr{Dg}\right)^{p_{2}(\tau)}\le\snr{D\tilde{u}}^{p_{2}(\tau)}+p_{2}(\tau)(\snr{D\tilde{u}}+\snr{Dg})^{p_{2}(\tau)-1}\snr{Dg},
\end{flalign} 
so by H\"older inequality with conjugate exponents $\left(\frac{p_{2}(\tau)}{p_{2}(\tau)-1},p_{2}(\tau)\right)$, \eqref{monoreg} and \eqref{mono2}
we get
\begin{flalign}\label{mono2.1}
\int_{B_{\tau}^{+}}&\snr{Dw_{\tau}}^{p_{2}(\tau)} \ \dx\le \int_{B_{\tau}^{+}}\left(\snr{D\tilde{u}(x_{\tau})}+\snr{Dg}\right)^{p_{2}(\tau)} \ \dx\nonumber \\
\le &(1+c\tau^{\beta_{0}})\int_{B_{\tau}^{+}}\snr{D\tilde{u}(x_{\tau})}^{p_{2}(\tau)} \ \dx\nonumber \\
&+c\left[\tau^{-\beta_{0}(p_{2}(\tau)-1)}\int_{B_{\tau}^{+}}\snr{Dg}^{p_{2}(\tau)}  \ \dx+\int_{B_{\tau}^{+}}\snr{Dg}^{p_{2}(\tau)} \ \dx\right]\nonumber\\
\le &(1+c\tau^{\beta_{0}})\int_{B_{\tau}^{+}}\snr{D\tilde{u}(x_{\tau})}^{p_{2}(\tau)} \ \dx\nonumber \\
&+c\left[\tau^{-\beta_{0}(p_{2}(\tau)-1)+n\left(1-\frac{p_{2}(\tau)}{q}\right)}+\tau^{n\left(1-\frac{p_{2}(\tau)}{q}\right)}\right]\nr{Dg}_{L^{q}(B_{1}^{+})}\nonumber \\
\le &(1+c\tau^{\beta_{0}})\int_{B_{\tau}^{+}}\snr{D\tilde{u}(x_{\tau})}^{p_{2}(\tau)} \ \dx+c\tau^{n\left(1-\frac{1}{q}\right)+1-p_{2}(\tau)}
\end{flalign}
for $c\equiv c(n,\gamma_{1},\gamma_{2},q,\nr{Dg}_{L^{q}(B_{1}^{+})})$. In the previous expression, we also used the original value of $\beta_{0}$. By $\eqref{ask}$, \eqref{k01} and $\eqref{mono2}$ we can refine \eqref{mono2.1} as
\begin{flalign}\label{mono2.2}
\int_{B_{\tau}^{+}}&k(x)\snr{D\bar{w}_{\tau}}^{p_{2}(\tau)} \ \dx\le (1+4[k]_{0,\nu}\tau^{\nu}) \int_{B_{\tau}^{+}}\snr{D\bar{w}_{\tau}}^{p_{2}(\tau)} \ \dx\nonumber \\
\le &(1+c\tau^{\beta'})\int_{B_{\tau}^{+}}\snr{D\tilde{u}(x_{\tau})}^{p_{2}(\tau)} \ \dx +c\tau^{n\left(1-\frac{1}{q}\right)+1-p_{2}(\tau)},
\end{flalign}
where $\beta':=\min\{\beta_{0},\nu\}$ and $c\equiv c(n,N,\m,\gamma_{1},\gamma_{2},q,\nr{Dg}_{L^{q}(B_{1}^{+})})$. Let us evaluate the $p_{2}(\tau)$-energy of $\tilde{u}$. First, recall that if $\frac{\partial \tilde{u}}{\partial r}:=D\tilde{u}\cdot \frac{x}{\snr{x}}$ denotes the radial derivative of $\tilde{u}$, then
\begin{flalign}\label{rad}
\left| \ \frac{\partial\tilde{u}}{\partial r} \ \right|\le \snr{D\tilde{u}}.
\end{flalign}
Moreover, if $p_{2}(\tau)\ge 2$ and $t\ge s\ge 0$ there holds that
\begin{flalign}\label{rad.1}
(t-s)^{p_{2}(\tau)}\le t^{p_{2}(\tau)}-s^{p_{2}(\tau)}.
\end{flalign}
A straightforward computation renders, for $x\in B_{\tau}^{+}$ that
\begin{flalign*}
\snr{D\tilde{u}(x_{\tau})}^{2}=\frac{\tau^{2}}{\snr{x}^{2}}\left[\snr{D\tilde{u}(x_{\tau})}^{2}-\left| \ D\tilde{u}(x_{\tau})\cdot \frac{x_{\tau}}{\snr{x_{\tau}}} \ \right|^{2}\right],
\end{flalign*}
so by \eqref{rad}, \eqref{rad.1}, area formula, $\eqref{ask}$, \eqref{k01} and $\eqref{mono5}_{2}$ we obtain
\begin{flalign}\label{mono3}
\int_{B_{\tau}^{+}}&\snr{D\tilde{u}(x_{\tau})}^{p_{2}(\tau)} \ \dx=\frac{\tau}{n-p_{2}(\tau)}\int_{\partial B_{\tau}^{+}}\left[\snr{D\tilde{u}(x)}^{2}-\left|\ \frac{\partial \tilde{u}}{\partial r} \ \right|^{2}\right]^{\frac{p_{2}(\tau)}{2}} \ \d \mathcal{H}^{n-1}(x)\nonumber \\
\le &\frac{\tau}{n-p_{2}(\tau)}\left[\int_{\partial B_{\tau}^{+}}\snr{D\tilde{u}(x)}^{p_{2}(\tau)}-\left|\ \frac{\partial \tilde{u}}{\partial r} \ \right|^{p_{2}(\tau)}\right] \ \d \mathcal{H}^{n-1}(x)\nonumber \\
\le &\frac{\tau}{n-p_{2}(\tau)}\left[(1+\tau^{\frac{\nu}{2}})\int_{\partial B_{\tau}^{+}}k(x)\snr{D\tilde{u}(x)}^{p_{2}(\tau)}\d\mathcal{H}^{n-1}(x)-\int_{\partial B_{\tau}^{+}}\left|\ \frac{\partial \tilde{u}}{\partial r} \ \right|^{p_{2}(\tau)} \ \d\mathcal{H}^{n-1}(x)\right].
\end{flalign}
Recalling the position made in \eqref{tiu}, by \eqref{mean}, H\"older inequality with conjugate exponents $\left(\frac{p_{2}(\tau)}{p_{2}(\tau)-1},p_{2}(\tau)\right)$, \eqref{monoreg}, $\eqref{ask}$, \eqref{mono6} and the minimality of $v$ in class $\hat{\mathcal{C}}_{u}^{p_{2}(\tau)}(B_{\tau}^{+},m)$ with $\eqref{mono2}_{1}$ we have
\begin{flalign}\label{mean1.1}
\mathcal{E}_{\tau}(\tilde{u},B_{\tau}^{+})\le&(1+c\tau^{\beta_{0}})\mathcal{E}_{\tau}(u,B_{\tau}^{+})+c\tau^{-\beta_{0}(p_{2}(\tau)-1)}\int_{B_{\tau}^{+}}\snr{Dg}^{p_{2}(\tau)} \ \dx\nonumber \\
\le &(1+c\tau^{\beta_{0}})\mathcal{E}_{\tau}(v,B_{\tau}^{+})+c\left[\tau^{n\left(1-\frac{1}{q}\right)+1-p_{2}(\tau)}+\tau^{n+1-p_{2}(4\tau)(1+\sigma'')}\right]\nonumber \\
\le &(1+c\tau^{\beta_{0}})\mathcal{E}_{\tau}(\bar{w}_{\tau},B_{\tau}^{+})+c\left[\tau^{n\left(1-\frac{1}{q}\right)+1-p_{2}(\tau)}+\tau^{n+1-p_{2}(4\tau)(1+\sigma'')}\right]\nonumber \\
\le &(1+c\tau^{\beta'})\int_{B_{\tau}^{+}}\snr{D\tilde{u}(x_{\tau})}^{p_{2}(\tau)} \ \dx+c\left[\tau^{n\left(1-\frac{1}{q}\right)+1-p_{2}(\tau)}+\tau^{n+1-p_{2}(4\tau)(1+\sigma'')}\right].
\end{flalign}
with $c(\texttt{data}_{p(\cdot)},\nr{Dg}_{L^{q}(B_{1}^{+})})$. Merging \eqref{mean1.1} with \eqref{mono3} and using \eqref{rad}, \eqref{ask} and $\eqref{mono5}_{2}$ we obtain
\begin{flalign}\label{mono9.1}
\mathcal{E}_{\tau}(\tilde{u},B_{\tau}^{+})\le&\frac{\tau}{n-p_{2}(\tau)}\left[(1+c\tau^{\beta''})\int_{\partial B_{\tau}^{+}}k(x)\snr{D\tilde{u}}^{p_{2}(\tau)} \ \d\mathcal{H}^{n-1}(x)\right.\nonumber \\
&-\left.\int_{\partial B_{\tau}^{+}}\left|\ \frac{\partial \tilde{u}}{\partial r}\ \right|^{p_{2}(\tau)} \ \d\mathcal{H}^{n-1}(x)+c\tau^{\beta'}(\tau^{\frac{\nu}{2}}+1)\int_{\partial B_{\tau}^{+}}k(x)\snr{D\tilde{u}}^{p_{2}(\tau)} \ \d\mathcal{H}^{n-1}(x)\right]\nonumber \\
&+c\left[\tau^{n\left(1-\frac{1}{q}\right)+1-p_{2}(\tau)}+\tau^{n+1-p_{2}(4\tau)(1+\sigma'')}\right]\nonumber \\
\le &\frac{\tau(1+c\tau^{\beta''})}{n-p_{2}(\tau)}\int_{\partial B_{\tau}^{+}}k(x)\snr{D\tilde{u}}^{p_{2}(\tau)} \ \d\mathcal{H}^{n-1}(x)\nonumber \\
&-\frac{\tau}{n-p_{2}(\tau)}
\int_{\partial B_{\tau}^{+}}\left|\ \frac{\partial \tilde{u}}{\partial r} \ \right|^{p_{2}(\tau)} \ \d\mathcal{H}^{n-1}(x)+c\left[\tau^{n\left(1-\frac{1}{q}\right)+1-p_{2}(\tau)}+\tau^{n+1-p_{2}(4\tau)(1+\sigma'')}\right],
\end{flalign}
with $\beta'':=\min\left\{\frac{\nu}{2},\beta'\right\}$ and $c\equiv c(\texttt{data},\nr{Dg}_{L^{q}(B_{1}^{+})})$. To summarize, we got
\begin{flalign}\label{mono9}
\tau\int_{\partial B_{\tau}^{+}}&k(x)\snr{D\tilde{u}}^{p_{2}(\tau)} \ \d\mathcal{H}^{n-1}(x)\ge \frac{n-p_{2}(\tau)}{1+c\tau^{\beta''}}\int_{B_{\tau}^{+}}k(x)\snr{D\tilde{u}}^{p_{2}(\tau)} \ \dx\nonumber \\
&+\frac{\tau}{1+c\tau^{\beta''}}\int_{\partial B_{\tau}^{+}}\left| \ \frac{\partial \tilde{u}}{\partial r} \ \right|^{p_{2}(\tau)} \ \d\mathcal{H}^{n-1}(x)\nonumber \\
&-\frac{c(n-p_{2}(\tau))}{1+c\tau^{\beta''}}\left[\tau^{n\left(1-\frac{1}{q}\right)+1-p_{2}(\tau)}+\tau^{n+1-p_{2}(4\tau)(1+\sigma'')}\right]
\end{flalign}
for $c\equiv c(\texttt{data},\nr{Dg}_{L^{q}(B_{1}^{+})},\beta_{0})$. Now, set 
\begin{flalign}\label{monoff}
\left(0,\frac{T}{4}\right)\ni \tau\mapsto \mathcal{f}(\tau):=\tau^{p_{2}(\tau)-n}\int_{B_{\tau}^{+}}k(x)\snr{D\tilde{u}}^{p_{2}(\tau)} \ \dx.
\end{flalign}
Multiplying both sides of \eqref{mono9} by $\tau^{p_{2}(\tau)-n-1}$ and using $\eqref{mono5}_{1}$ and \eqref{s''} we obtain
\begin{flalign}\label{mono10}
\tau^{p_{2}(\tau)-n}\int_{\partial B_{\tau}^{+}}&k(x)\snr{D\tilde{u}}^{p_{2}(\tau)} \ \d\mathcal{H}^{n-1}(x)\ge \frac{n-p_{2}(\tau)}{1+c\tau^{\beta''}}\left[\tau^{-1}\mathcal{f}(\tau)-c\left(\tau^{\kappa-1}+\tau^{-\frac{n}{q}}\right)\right]\nonumber \\
&+\frac{\tau^{p_{2}(\tau)-n}}{1+c\tau^{\beta''}}
\int_{\partial B_{\tau}^{+}}\left|\ \frac{\partial \tilde{u}}{\partial r} \ \right|^{p_{2}(\tau)} \ \d\mathcal{H}^{n-1}(x)
\end{flalign}
for $c\equiv c(\texttt{data},\nr{Dg}_{L^{q}(B_{1}^{+})},\kappa)$. From \eqref{aspinf} follows that $\left(0,\frac{T}{4}\right)\ni\tau\mapsto p_{2}(\tau)$ is differentiable with bounded, non-negative first derivative $0\le p'(\tau)\le c(n,[p]_{0,1})$. We compute:
\begin{flalign*}
\mathcal{f}'(\tau)=&(p_{2}(\tau)-n)\tau^{p_{2}(\tau)-n-1}\int_{B_{\tau}^{+}}k(x)\snr{D\tilde{u}}^{p_{2}(\tau)} \ \dx\nonumber \\
&+\tau^{p_{2}(\tau)-n}\int_{\partial B_{\tau}^{+}}k(x)\snr{D\tilde{u}}^{p_{2}(\tau)} \ \d\mathcal{H}^{n-1}(x)\nonumber \\
&+p_{2}'(\tau)\log(\tau)\tau^{p_{2}(\tau)-n}\int_{B_{\tau}^{+}}k(x)\snr{D\tilde{u}}^{p_{2}(\tau)} \ \dx \nonumber \\
&+p_{2}'(\tau)\tau^{p_{2}(\tau)-n}\int_{B_{\tau}^{+}}k(x)\log(\snr{D\tilde{u}})\snr{D\tilde{u}}^{p_{2}(\tau)} \ \dx.
\end{flalign*}
We record that, for all $\varepsilon_{0}\in (0,1)$ there holds that
\begin{flalign}\label{log}
\snr{\log(t)}\le c(\varepsilon_{0})(1+t)t^{-\varepsilon_{0}}\quad \mbox{for any} \ \ t>0.
\end{flalign}
Let us estimate the last two terms appearing in the expansion of $\mathcal{f}'(\tau)$. Using \eqref{log} with $\varepsilon_{0}=1-\beta''$ we bound
\begin{flalign*}
p_{2}'(\tau)&\log(\tau)\tau^{p_{2}(\tau)-n}\int_{B_{\tau}^{+}}k(x)\snr{D\tilde{u}}^{p_{2}(\tau)} \ \dx\le cp_{2}'(\tau)\tau^{\beta''-1+p_{2}(\tau)-n}\int_{B_{\tau}^{+}}k(x)\snr{D\tilde{u}}^{p_{2}(\tau)} \ \dx.
\end{flalign*}
By \eqref{log} with $\varepsilon_{0}=1-p_{2}(\tau)\min\left\{\frac{\sigma_{0}}{2},\frac{1-\kappa}{2\gamma_{2}}\right\}$, (keep \eqref{t10}-\eqref{t12} in mind) and $\eqref{ca1}_{2}$ we obtain
\begin{flalign*}
p_{2}'(\tau)&\tau^{p_{2}(\tau)-n}\int_{B_{\tau}^{+}}k(x)\log(\snr{D\tilde{u}})\snr{D\tilde{u}}^{p_{2}(\tau)} \ \dx\le c\tau^{p_{2}(\tau)}\mint_{B_{\tau}^{+}}(1+\snr{D\tilde{u}})^{p_{2}(\tau)(1+\varepsilon_{0})} \ \dx \nonumber \\
\le&c\tau^{p_{2}(\tau)-p_{2}(4\tau)(1+\varepsilon_{0})}\le c\tau^{\kappa-1},
\end{flalign*}
for $c\equiv c(\texttt{data}_{p(\cdot)},\nr{Dg}_{L^{1}(B_{1}^{+})},\kappa)$. All in all, we got the following lower bound for $\mathcal{f}'(\tau)$:
\begin{flalign}\label{mono11}
\mathcal{f}'(\tau)\ge& (p_{2}(\tau)-n)\tau^{p_{2}(\tau)-n-1}\int_{B_{\tau}^{+}}k(x)\snr{D\tilde{u}}^{p_{2}(\tau)} \ \dx\nonumber \\
&+\tau^{p_{2}(\tau)-n}\int_{\partial B_{\tau}^{+}}k(x)\snr{D\tilde{u}}^{p_{2}(\tau)} \ \d\mathcal{H}^{n-1}(x)\nonumber \\
&-cp'_{2}(\tau)\tau^{\beta''-1+p_{2}(\tau)-n}\int_{B_{\tau}^{+}}k(x)\snr{D\tilde{u}}^{p_{2}(\tau)} \ \dx-c\tau^{\kappa-1}\nonumber \\
=&\tau^{p_{2}(\tau)-n}\int_{\partial B_{\tau}^{+}}k(x)\snr{D\tilde{u}}^{p_{2}(\tau)} \ \d\mathcal{H}^{n-1}(x)\nonumber \\
&+\left(p_{2}(\tau)-n-cp_{2}'(\tau)\tau^{\beta''}\right)\frac{\mathcal{f}(\tau)}{\tau}-c\tau^{\kappa-1},
\end{flalign}
with $c\equiv c(\texttt{data},\nr{Dg}_{L^{q}(B_{1}^{+})},\kappa)$. Set $$\varphi(\tau):=n-p_{2}(\tau)+cp_{2}'(\tau)\tau^{\beta''}.$$
Merging \eqref{mono10} and \eqref{mono11} we obtain
\begin{flalign*}
\mathcal{f}'(\tau)&+\left(\varphi(\tau)-\frac{n-p_{2}(\tau)}{1+c\tau^{\beta''}}\right)\frac{\mathcal{f}(\tau)}{\tau}\ge \frac{\tau^{p_{2}(\tau)-n}}{1+c\tau^{\beta''}}
\int_{\partial B_{\tau}^{+}}\left|\ \frac{\partial \tilde{u}}{\partial r} \ \right|^{p_{2}(\tau)} \ \d\mathcal{H}^{n-1}(x) \nonumber \\
&-c\tau^{\kappa-1}\left[\frac{2(n-p_{2}(\tau))}{1+c\tau^{\beta''}}+1\right],
\end{flalign*}
where $c\equiv c(\texttt{data},\nr{Dg}_{L^{q}(B_{1}^{+})},\kappa)$. In the previous display we also used that $\kappa\le \beta_{0}$. It is easy to see that
\begin{flalign*}
\left| \ \varphi(\tau)-\frac{n-p_{2}(\tau)}{1+c\tau^{\beta''}} \ \right|\le \tilde{c}(\texttt{data},\nr{Dg}_{L^{q}(B_{1}^{+})},\kappa)\tau^{\beta''},
\end{flalign*}
therefore we get
\begin{flalign}\label{mono12}
\mathcal{f}'(\tau)+\tilde{c}\tau^{\beta''-1}\mathcal{f}(\tau)+c\tau^{\kappa-1}\ge \frac{\tau^{p_{2}(\tau)-n}}{1+c\tau^{\beta''}}
\int_{\partial B_{\tau}^{+}}\left|\ \frac{\partial \tilde{u}}{\partial r} \ \right|^{p_{2}(\tau)} \ \d\mathcal{H}^{n-1}(x).
\end{flalign}
Let $\Phi(\cdot)$ be the function defined in \eqref{monophi}. Combining \eqref{mono12} and the fact that $\tau\in (0,1]$, we immediately see that
\begin{flalign*}
\Phi'(\tau)\ge& \exp\left\{\frac{\tilde{c}\tau^{\beta''}}{\beta''}\right\}\left[\tilde{c}\tau^{\beta''-1}\mathcal{f}(\tau)+\mathcal{f}'(\tau)+c\tau^{\kappa-1}\right]\nonumber \\
\ge& \frac{\tau^{p_{2}(\tau)-n}}{1+c}
\int_{\partial B_{\tau}^{+}}\left|\ \frac{\partial \tilde{u}}{\partial r} \ \right|^{p_{2}(\tau)} \ \d\mathcal{H}^{n-1}(x),
\end{flalign*}
with $c\equiv c(\texttt{data},\nr{Dg}_{L^{q}(B_{1}^{+})},\kappa)$. At this stage, we integrate the inequality in the previous display over $\tau \in (\rr,R)$ with $0<\rr<R\le T\le 1$ to get
\begin{flalign}\label{mono13}
\Phi(R)-\Phi(\rr)\ge& \frac{1}{1+c}\int_{\rr}^{R}\tau^{p_{2}(\tau)-n}\left(\int_{\partial B_{\tau}^{+}}\left| \ \frac{\partial \tilde{u}}{\partial r} \ \right|^{p_{2}(\tau)} \ \d\mathcal{H}^{n-1}(x)\right) \ \d \tau\nonumber \\
\ge &\frac{\rr^{p_{2}(R)-p_{2}(\rr)}}{1+c}\int_{\rr}^{R}\tau^{p_{2}(\rr)-n}\left(\int_{\partial B_{\tau}^{+}}\left| \ \frac{\partial \tilde{u}}{\partial r} \ \right|^{p_{2}(\tau)} \ \d\mathcal{H}^{n-1}(x)\right) \ \d \tau.
\end{flalign}
Once \eqref{mono13} is available, we can proceed exactly as in \cite[Lemma 4.1]{tac} to end up with
\begin{flalign}\label{mono14}
\int_{\partial B_{1}^{+}}&\snr{\tilde{u}(Rx)-\tilde{u}(\rr x)}^{p_{2}(\rr)} \ \d \mathcal{H}^{n-1}(x)\nonumber \\
\le& \log(R/\rr)\int_{\rr}^{R}\tau^{p_{2}(\rr)-n}\left(\int_{\partial B_{\tau}^{+}}\left| \ \frac{\partial \tilde{u}}{\partial r} \ \right|^{p_{2}(\rr)} \ \d\mathcal{H}^{n-1}(x)\right) \ \d \tau\nonumber \\
\le &\frac{1+c}{\rr^{p_{2}(R)-p_{2}(\rr)}}\log(R/\rr)\left[\Phi(R)-\Phi(\rr)\right],
\end{flalign}
for $c\equiv c(\texttt{data},\nr{Dg}_{L^{q}(B_{1}^{+})},\kappa)$. Finally, keeping in mind \eqref{g} and position \eqref{tiu} we bound via \eqref{mono14}:
\begin{flalign*}
\int_{\partial B_{1}^{+}}&\snr{u(Rx)-u(\rr x)}^{p_{2}(\rr)} \ \d\mathcal{H}^{n-1}(x)\le c\int_{\partial B_{1}^{+}}\snr{\tilde{u}(Rx)-\tilde{u}(\rr x)}^{p_{2}(\rr)} \ \d\mathcal{H}^{n-1}(x)\nonumber \\
&+c\int_{\partial B_{1}^{+}}\snr{g(Rx)-g(\rr x)}^{p_{2}(\rr)} \ \d\mathcal{H}^{n-1}(x)\nonumber \\
\le &c\log(R/\rr)\left[\rr^{p_{2}(\rr)-p_{2}(R)}\left(\Phi(R)-\Phi(\rr)\right)\right]+c(R-\rr)^{\gamma_{1}\left(1-\frac{n}{q}\right)},
\end{flalign*}
with $c\equiv c(\texttt{data},\nr{Dg}_{L^{q}(B_{1}^{+})},\kappa)$ and the proof is complete.
\end{proof}
Before going on, let us stress that, as in Section \ref{partial}, we can reduce problem \eqref{cvp1.1} to an equivalent one defined on the half-ball $B_{1}^{+}$. In fact, in the proof of Theorem \ref{t2} we shall consider $u\in W^{1,p(\cdot)}(B_{1}^{+},\m)$ solution to
\begin{flalign}\label{pdfull}
\mathcal{C}_{g}^{p(\cdot)}(B_{1}^{+},\m)\ni w \mapsto \int_{B_{1}^{+}}\snr{Dw}^{p(x)} \ \dx,
\end{flalign}
with boundary datum $g(\cdot)$ as in \eqref{g} (of course $\bar{\Omega}$ is replaced by $\bar{B}_{1}^{+}$). Now we are ready to prove Theorem \ref{t2}.

\subsection{Proof of Theorem \ref{t2}}
As a consequence of Theorem \ref{t1}, we know that $u\in C^{0,\beta_{0}}_{loc}(\bar{B}_{1}^{+}\setminus \Sigma_{0}(u),\m)$, for a closed, negligible set $\Sigma_{0}\subset \bar{B}_{1}^{+}$. Let us prove that $\Sigma_{0}\cap \partial B_{1}^{+}=\emptyset$. By contradiction, assume that $x_{0}\in \Gamma_{1}$ is a singular point for $u\in W^{1,p(\cdot)}(B_{1}^{+},\m)$, solution to \eqref{pdfull}. Up to translations, there is no loss of generality in assuming $x_{0}=0$. Now, for $j \in \N$, define the rescaled maps
\begin{flalign*}
&u_{j}(x):=u(x/j),\qquad p_{j}(x):=p(x/j),\qquad k_{j}(x):=j^{p_{j}(x)-p(0)},\qquad g_{j}(x):=g(x/j).
\end{flalign*}
Since $u\in W^{1,p(\cdot)}(B_{1}^{+},\m)$ solves \eqref{pdfull}, we deduce that each $u_{j}\in W^{1,p_{j}(\cdot)}(B_{j}^{+},\m)$ solves problem
\begin{flalign}\label{pdj}
\mathcal{C}^{p_{j}(\cdot)}_{g_{j}}(B_{j}^{+},\m)\ni w\mapsto \min \int_{B_{j}^{+}}k_{j}(x)\snr{Dw}^{p_{j}(x)} \ \dx,
\end{flalign}
therefore it is easy to see that it also solves
\begin{flalign}\label{pdj1}
\mathcal{C}^{p_{j}(\cdot)}_{g_{j}}(B_{1}^{+},\m)\ni w\mapsto \min \int_{B_{1}^{+}}k_{j}(x)\snr{Dw}^{p_{j}(x)} \ \dx.
\end{flalign}
Notice that, whenever $x\in \bar{B}_{1}^{+}$, a straightforward computation shows that 
\begin{flalign}\label{f1}
\{p_{j}\}\ \mbox{and}\ \{k_{j}\}\ \mbox{are Lipschitz continuous uniformly in} \ j\in \N \ \mbox{in} \ \bar{B}_{1}^{+}. 
\end{flalign}
Again for $x\in \bar{B}_{1}^{+}$, recalling Morrey's embedding theorem we see that
\begin{flalign}
&\sup_{x\in \bar{B}_{1}^{+}}\snr{p_{j}(x)-p(0)}\le 4[p]_{0,1}\snr{x/j}\le 4[p]_{0,1}(1/j)\to 0 \label{f2}\\
&\sup_{x\in \bar{B}_{1}^{+}}\snr{k_{j}(x)-1}\le \max\left\{\exp\left(\frac{4[p]_{0,1}\log(j)}{j}\right)-1,1-\exp\left(\frac{-4[p]_{0,1}\log(j)}{j}\right)\right\} \to 0 \label{f3}\\
&\sup_{x\in \bar{B}_{1}^{+}}\snr{g_{j}(x)-g(0)}\le 4[g]_{0,1-\frac{n}{q}}\snr{x/j}^{1-\frac{n}{q}}\le4[g]_{0,1-\frac{n}{q}}(1/j)^{1-\frac{n}{q}} \to 0\label{f4}.
\end{flalign}
Furthermore, recalling \eqref{g} and \eqref{f4} we see that
\begin{flalign*}
\int_{B_{1}^{+}}\snr{Dg_{j}}^{q} \ \dx \le j^{q-n}\nr{Dg}_{L^{q}(B_{1}^{+})}^{q} \ \dx\to 0,
\end{flalign*}
so 
\begin{flalign}\label{ff4.1}
g_{j}\to g(0)\quad \mbox{in} \ \ W^{1,q}(\bar{B}_{1}^{+},\m).
\end{flalign}
Collecting \eqref{pdj} and \eqref{f1}-\eqref{ff4.1} we see that the assumptions of Lemma \ref{comp} are satisfied in $B_{1}^{+}$, so, in particular $u_{j}\rightharpoonup u_{0}$ weakly in $W^{1,(1+\tilde{\sigma}p(0))}_{loc}(B_{1}^{+},\m)$, $u_{0}$ is a solution of problem
\begin{flalign}\label{pdg0}
\mathcal{C}^{p(0)}_{g(0)}(B_{R}^{+},\m)\ni w\mapsto \min \int_{B_{R}^{+}}\snr{Dw}^{p(0)} \ \dx
\end{flalign}
for any $R\in (0,1)$ and, since $x_{0}=0$ is a singular point of all the $u_{j}$'s, then it is also a singular point for $u_{0}$. We fix $0<\mu_{1}<\mu_{2}<1$ and let $j\in \N$ be so large that $j^{-1}<\frac{T}{4}$ with $T$ as in Lemma \ref{mono}. Recalling also \eqref{t2.1} (on $\bar{B}_{1}^{+}$ of course), we see that the assumptions of Lemma \ref{mono} are satisfied, we can apply \eqref{monophi+} with $\rr=\mu_{1}/j$ and $R=\mu_{2}/j$ to get
\begin{flalign}\label{f5}
\int_{\partial B_{1}^{+}}&\snr{u_{j}(\mu_{1}x)-u_{j}(\mu_{2}x)}^{p_{2}(\mu_{1}/j)} \ \d\mathcal{H}^{n-1}(x)\nonumber\\
=&\int_{\partial B_{1}^{+}}\snr{u(j^{-1}\mu_{1}x)-u(j^{-1}\mu_{2}x)}^{p_{2}(\mu_{1}/j)} \ \d\mathcal{H}^{n-1}(x)\nonumber \\
\le & c\log(\mu_{2}/\mu_{1})\left(\Phi(\mu_{2}/j)-\Phi(\mu_{1}/j)\right)+cj^{-\gamma_{1}\left(1-\frac{n}{q}\right)}(\mu_{2}-\mu_{1})^{\gamma_{1}\left(1-\frac{n}{q}\right)},
\end{flalign}
with $\Phi(\cdot)$ defined as in \eqref{monophi} with $k(\cdot)\equiv 1$. By Lemma \ref{mono}, we deduce that
\begin{flalign*}
\lim_{j\to \infty}\Phi(\mu_{1}/j)=\lim_{j\to \infty}\Phi(\mu_{2}/j)=L \ \ \mbox{for some finite} \ \ L\ge 0,
\end{flalign*}
thus
\begin{flalign}\label{f6}
c\log(\mu_{2}/\mu_{1})\left(\Phi(\mu_{2}/j)-\Phi(\mu_{1}/j)\right)+cj^{-\gamma_{1}\left(1-\frac{n}{q}\right)}(\mu_{2}-\mu_{1})^{\gamma_{1}\left(1-\frac{n}{q}\right)}\to 0.
\end{flalign}
Furthermore, in light of \eqref{com3} we have that $u_{j}\to u_{0}$ almost everywhere in $B_{1}^{+}$, so, recalling also \eqref{f2} we get
\begin{flalign}\label{f7}
\snr{u_{j}(\mu_{2}x)-u_{j}(\mu_{1}x)}^{p_{2}(\mu_{1}/j)}\to \snr{u_{0}(\mu_{2}x)-u_{0}(\mu_{1}x)}^{p(0)} \quad \mbox{for a.e.} \ \ x\in B_{1}^{+}.
\end{flalign}
Combining $\eqref{f7}$, $\eqref{m}_{1}$ and the dominated convergence theorem, we obtain
\begin{flalign}\label{f8}
\lim_{j\to \infty}\int_{\partial B_{1}^{+}}&\snr{u_{j}(\mu_{2}x)-u_{j}(\mu_{1}x)}^{p_{2}(\mu_{1}/j)} \ \d\mathcal{H}^{n-1}(x)\nonumber \\
=&\int_{\partial B_{1}^{+}}\snr{u_{0}(\mu_{2}x)-u_{0}(\mu_{1}x)}^{p(0)} \ \d\mathcal{H}^{n-1}(x).
\end{flalign}
Inserting \eqref{f8} and \eqref{f6} in \eqref{f5}, we end up with
\begin{flalign*}
\int_{\partial B_{1}^{+}}\snr{u_{0}(\mu_{2}x)-u_{0}(\mu_{1}x)}^{p(0)} \ \d\mathcal{H}^{n-1}(x)=0,
\end{flalign*}
which in turn implies that $u_{0}$ is homogeneous of degree zero. Recalling that $u_{0}$ is a solution of \eqref{pdg0}, by \cite[Theorem 5.7]{harlin} we can conclude that $u_{0}$ is constant, so $x_{0}=0$ cannot be a singular point. This means that $\Sigma_{0}\Subset B_{1}^{+}$ and the proof is complete.

\end{document}